\newcommand\A{{\mathcal A}}
\newcommand{\CC}{\ensuremath{\mathbb{C}}}
\newcommand{\ZZ}{\ensuremath{\mathbb{Z}}}
\newcommand{\QQ}{\ensuremath{\mathbb{Q}}}
\newcommand{\NN}{\ensuremath{\mathbb{N}}}
\newcommand{\hol}{\ensuremath{\mathcal{O}}}
\newcommand{\PP}{\ensuremath{\mathbb{P}}}
\def\eea{\end{eqnarray*}}
\def\bea{\begin{eqnarray*}}
\newcommand\dual{\mathrel{\raise3pt\hbox{$\underline{\mathrm{\thinspace d
\thinspace}}$}}}
\newcommand\qe{\ifhmode\unskip\nobreak\fi\quad $\Box$}       
\def\BOX{\hfill\lower.5\baselineskip\hbox{$\Box$}}
\newtheorem{theo}{Theorem}[section]
\newtheorem{remarkk}[theo]{Remark}
\newenvironment{rem}{\begin{remarkk}\rm}{\end{remarkk}}
\newtheorem{defin}[theo]{Definition}
\newtheorem{prop}[theo] {Proposition}
\newtheorem{cor}[theo]{Corollary}
\newtheorem{lemma}[theo]{Lemma}
\newtheorem{example}[theo]{Example}
\newtheorem{conj}[theo]{Conjecture}
\newtheorem{problem}[theo]{Problem}
\DeclareMathOperator{\coker}{coker}
\DeclareMathOperator{\Sing}{Sing}
\DeclareMathOperator{\lcm}{lcm}
\begin{document}

\title{Product-Quotient Surfaces: new invariants and algorithms}
\author{I. Bauer,  R. Pignatelli}

\thanks{This research took part in the realm of the
Forschergruppe  790 `Classification of algebraic surfaces and compact complex manifolds' of the D.F.G. The second author was partially supported by the projects PRIN2010-2011 Geometria delle variet\`a algebriche and Futuro in Ricerca 2012 Moduli Spaces and Applications.}
\date{\today}
\maketitle
\begin{abstract}
In this article we suggest a new approach to the systematic, computer-aided construction and to the classification of product-quotient surfaces, introducing a new invariant, the integer $\gamma$,
which depends only on the singularities of the quotient model $X = (C_1 \times C_2)/G$. It turns out that $\gamma$  is related to the codimension of the subspace of $H^{1,1}$
generated by algebraic curves coming from the construction (i.e., the classes of the two fibers and the Hirzebruch-Jung strings arising from the minimal resolution of singularities of $X$).
 
Profiting from this new insight we developped and implemented an algorithm which constructs all regular product-quotient surfaces with given 
values of $\gamma$ and  geometric genus in  the computer algebra program MAGMA. Being far better than the previous algorithms,  we are able to construct  a substantial number of new 
regular product-quotient surfaces of geometric genus zero. We prove that only two of these are of general type, raising the number of known families of product-quotient 
surfaces of general type with genus zero to 75. This gives evidence to the conjecture that there is an effective bound $\Gamma(p_g,q) \geq \gamma$ (cf. Conjecture \ref{boundc}). 
\end{abstract}

\section{Introduction}
Let $G$ be a finite group acting on two compact Riemann surfaces $C_1$, $C_2$ of respective genera $g_1, g_2 \geq 2$.
We shall consider the diagonal action of $G$ on $C_1 \times C_2$ and
in this situation we say for short: the action of $G$ on
$C_1 \times C_2$ is {\em unmixed}. By \cite{FabIso} we may assume w.l.o.g.
that $G$ acts faithfully on both factors.
\begin{defin}\label{prodquot}
The minimal resolution $S$ of the singularities of $X = (C_1 \times C_2)/ G$, where $G$ is a finite group with an unmixed
action on the direct product of two compact Riemann surfaces $C_1$, $C_2$ of respective genera at least
two, is called a {\em product-quotient surface}.

$X$ is called  {\em the quotient model} of the product-quotient surface.
\end{defin}

In the last years several people have been studying product-quotient surfaces and quite some literature is nowadays available (cf. {\it e.g} \cite{FabIso, bacat, pg=q=0, polmi, PolizziNumProp, frap, 4names, bp, frapi, pene, gape}...).

The authors (partially in collaboration with F. Catanese, D. Frapporti and F. Grunewald) have been focussing mainly on  the  systematic construction and classification of product-quotient surfaces of general type with geometric genus $p_g=0$. 
Our previous results may be summarized as follows.

\begin{theo}[\cite{bacat}, \cite{pg=q=0} \cite{4names},\cite{bp}]\label{classiso} \

1) Product-quotient surfaces {\em isogenous to a product} ({\it i.e.} $G$ acts freely)
with $p_g(S) = q(S) = 0$ form 13 irreducible connected components of the Gieseker 
moduli space of surfaces of general type.

2) {\em Minimal} product-quotient surfaces with $p_g=0$  of general type form 72 irreducible families, including the 13 in point 1.

3) There is exactly one product-quotient surface with $p_g=0$, $K^2_S >0$ which is not minimal.
\end{theo}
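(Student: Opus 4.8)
Since the statement collects results of \cite{bacat,pg=q=0,4names,bp}, I describe the common classification strategy. The first step is to translate the geometry into finite group theory. Because $q(S)=0$ forces both quotient curves $C_i/G$ to be $\PP^1$, each covering $C_i\to\PP^1$ is encoded by a spherical system of generators of $G$, i.e. a tuple $(a_{i,1},\dots,a_{i,r_i})$ generating $G$ with $\prod_j a_{i,j}=1$, whose element orders $m_{i,j}$ record the branching. Riemann--Hurwitz recovers the genus via $2(g_i-1)=|G|\,\beta_i$, where $\beta_i=-2+\sum_j\bigl(1-\tfrac1{m_{i,j}}\bigr)$, and, using that the cyclic quotient singularities of $X$ are rational so that $\chi(\hol_S)=\chi(\hol_X)$, the condition $p_g=q=0$ (equivalently $\chi(\hol_S)=1$) becomes the single Diophantine equation $(g_1-1)(g_2-1)=|G|$.

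The second step is to make the resulting search finite. Combining the two Riemann--Hurwitz relations with $(g_1-1)(g_2-1)=|G|$ gives the clean identity $\beta_1\beta_2=4/|G|$. Each $\beta_i$ is a positive number of the form $-2+\sum(1-1/m)$, whose minimum positive value is the Hurwitz bound $\tfrac1{42}$ (attained by the type $(2,3,7)$); hence $|G|\le 4\cdot 42^2$. This bounds $|G|$, the numbers $r_i$ of branch points, and the orders $m_{i,j}$ simultaneously, reducing the problem to a finite enumeration. One then runs, in MAGMA, over all groups of the admissible orders and all pairs of spherical generating systems, keeping those satisfying $(g_1-1)(g_2-1)=|G|$.

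For each surviving datum one computes the basket of singularities of $X$: the stabilizers $\Stab_G(x)\cap\Stab_G(y)$ of the diagonal action are cyclic, giving Hirzebruch--Jung singularities, which one resolves to evaluate $K_S^2$. Since $K_{C_1\times C_2}$ is ample ($g_i\ge2$), its descent $K_X$ is an ample $\QQ$-Cartier divisor, so $X$ is of general type and hence so is its resolution $S$; moreover $K_S^2\le K_X^2=8$, the drop from the smooth value being a correction determined by the Hirzebruch--Jung continued fractions. The free case (point 1) is singled out by $\Stab_G(x)\cap\Stab_G(y)=\{1\}$ for all $(x,y)$, where $X=S$ is smooth with $K_S^2=8$; there the enumeration returns exactly $13$ data. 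For points 2 and 3 one must additionally decide minimality by locating the $(-1)$-curves, which arise among the exceptional strings and the images of the coordinate fibers.

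The final step is to pass from data to \emph{families}. By the deformation theory of Bauer--Catanese \cite{bacat}, two product-quotient surfaces with the same $G$ whose generating systems differ by an automorphism of $G$, by Hurwitz braid moves, or by exchanging the two factors lie in the same irreducible component of the moduli space, and distinct equivalence classes give distinct families; counting orbits yields the numbers $13$ and $72$. The main obstacle is twofold and lies in these last two steps. First, one must certify that the (large) computer enumeration is genuinely exhaustive and that the braid-plus-automorphism equivalence is implemented correctly, so that no family is missed or double-counted. Second, the minimality analysis of point 3 is delicate: proving that \emph{exactly one} surface with $K_S^2>0$ fails to be minimal requires a curve-by-curve study of how the exceptional strings of the resolution meet the two fibrations, precisely the local intersection bookkeeping that the new invariant $\gamma$ is designed to organize.
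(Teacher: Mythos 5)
Your outline captures the right general machinery (generating vectors, Riemann--Hurwitz, a finite enumeration in MAGMA, baskets of cyclic quotient singularities, then a minimality analysis), but two of your key reductions are wrong, and they are wrong precisely at the points where the actual classification in \cite{bp} has to work hardest. First, the equation $(g_1-1)(g_2-1)=|G|$ and the resulting identity $\beta_1\beta_2=4/|G|$ with the Hurwitz bound $|G|\le 4\cdot 42^2$ are valid only in the \'etale case (point 1 of the theorem). When the action has fixed points, $\chi(\hol_S)=\chi(\hol_X)$ still holds (the singularities are rational), but $\chi(\hol_X)$ is \emph{not} $\chi(\hol_{C_1\times C_2})/|G|$: by Proposition \ref{forminf} one has $\chi=\frac{(g_1-1)(g_2-1)}{|G|}+\frac{\mu-2\gamma}{4}$, so $\chi=1$ ties $(g_1-1)(g_2-1)/|G|$ to the basket, not to the constant $1$. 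The finiteness argument therefore has to enumerate admissible baskets first (in \cite{bp} this is done with $K_S^2$ fixed, in the present paper with $\gamma$ fixed) and only then bound the signatures; your ``single Diophantine equation'' collapses exactly the step that makes points 2 and 3 nontrivial.

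Second, the assertion that $K_X$ ample ($\QQ$-Cartier) forces $S$ to be of general type is false and would wreck the counts $72$ and $1$. The quotient singularities of $X$ need not be canonical, so $K_S=\sigma^*K_X-\A$ with $\A$ effective and the plurigenera of $S$ can die even though $K_X$ is ample; the paper's tables \ref{K2<1}--\ref{gamma3} consist entirely of product-quotient surfaces that are \emph{not} of general type. Deciding which candidates are of general type, and then which of those are minimal, is a genuine and separate part of the proof (in the $K^2>0$ range it rests on $P_2>0$ together with a delicate study of $(-1)$-curves, which is how the single non-minimal ``fake Godeaux'' of point 3 is isolated). Finally, your claim that distinct braid-plus-automorphism classes automatically give distinct irreducible components is stronger than what the cited papers prove: for point 1 the component statement is Catanese's theorem on surfaces isogenous to a product, while for points 2 and 3 the theorem only asserts irreducible \emph{families}.
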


Even if quite some effort has been put and new techniques have been developped, it remains the following big open problem:
\begin{problem}\label{problem}
 Classify all product-quotient surfaces of general type with $p_g=0$.
\end{problem}
By theorem \ref{classiso} it remains to classify all {\em non-minimal} product-quotient surfaces of general type with geometric genus zero.
In \cite{bp} we wrote a MAGMA script producing 
all regular  product-quotient surfaces with $p_g=0$ and 
fixed $K^2_S$. The problem is that this program is very slow for $K^2_S <0$.

As already 
noticed in {\it loc. cit}, one approach to solve the above question is
\begin{itemize}
\item[1)]  prove that $K_S^2 \leq -C$ implies that $S$ is not of general type for some explicit integer $C$;
\item[2)]  use a suitable algorithm to produce all regular product-quotient surfaces with $p_g=0$ and $-C<K^2<0$.
\end{itemize}
At the moment, not only an explicit bound is out of reach, but also the algorithm used in \cite{bp} is far from being good enough to make step 2 work even for small $C$.

In the present article we suggest a different approach to solve problem \ref{problem}. 

The key observation is the following: inspecting the list of  surfaces in Theorem \ref{classiso} (cf. \cite{bp}, tables 1, 2), one notices that all minimal product-quotient surfaces with $p_g=0$ have the property that $H^{1,1}(S)$ is generated by the fibres of the two fibrations and the irreducible components of the 
exceptional divisor of the minimal resolution of singularities $\sigma$, whereas for the single non-minimal  product-quotient surface with $K_S^2 >0$, this is not the case. Here the fibres and the exceptional curves generate a subspace of codimension 2. 

This remark led us to study, for a general product-quotient surface $S$, the subspace of $H^{1,1}(S)$ generated by the fibres of the two fibrations and the irreducible components of the 
exceptional divisor of the minimal resolution of singularities $\sigma$. We shall  prove here, that its codimension is even, and equals 
$2(p_g(S) + \gamma)$ (cf. Proposition \ref{gamma+pg}), where $\gamma$ is an invariant depending only on some numerical data of the singularities of $X$. 

Note that then in particular: $p_g=0 \Rightarrow \gamma \geq 0$. 

\begin{rem}
Looking at the program used in \cite{bp}, one notices that almost half of the computations 
when running for $p_g=0$ had to deal with the case $\gamma <0$. This information could of course now be used to speed up the computations quite a bit.

Instead, we chose to write a different program, substituting (as input) $\gamma$ to $K^2$. 
The result is a much quicker program, producing dozens of new regular product-quotient surfaces 
with $p_g=0$ (and several with $p_g>0$, on which we do not report here). 
\end{rem}

The computations we did suggest the following
\begin{conj}
Let $S$ be a product-quotient surface. Then $S$ is minimal if and only if $p_g(S) + \gamma =0$.
\end{conj}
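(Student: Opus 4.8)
By Proposition \ref{gamma+pg} the integer $p_g(S)+\gamma$ is exactly half the codimension in $H^{1,1}(S)$ of the subspace
\[
V:=\langle\,[F_1],[F_2],[E_1],\dots,[E_r]\,\rangle
\]
spanned by the two fibre classes and the components $E_1,\dots,E_r$ of the exceptional divisor of $\sigma$. Thus the conjecture is equivalent to the purely geometric assertion
\[
S\ \text{minimal}\quad\Longleftrightarrow\quad V=H^{1,1}(S).
\]
The plan is to prove both implications by classifying the possible $(-1)$-curves on $S$, using the intersection data $F_1^2=F_2^2=0$, $F_1\cdot F_2=|G|$, the nefness of $F_1,F_2$, the identities $E_i\cdot F_j=0$ (a general fibre avoids the exceptional locus), the negative definiteness of $L:=\langle E_i\rangle$, and the adjunction values $K_S\cdot F_1=2g_2-2\ge 2$, $K_S\cdot F_2=2g_1-2\ge 2$, $K_S\cdot E_i=-E_i^2-2\ge 0$. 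Note that $F_1,F_2,E_1,\dots,E_r$ are independent, so $V$ is a lattice of signature $(1,r+1)$.

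For the implication $V=H^{1,1}(S)\Rightarrow S$ minimal I would suppose $E$ is a $(-1)$-curve and write $[E]=aF_1+bF_2+\ell$ with $\ell\in L$. Nefness of $F_1,F_2$ and $E_i\cdot F_j=0$ give $E\cdot F_1=b\,|G|\ge 0$ and $E\cdot F_2=a\,|G|\ge 0$, hence $a,b\ge 0$. If $a=b=0$ then $E$ is contracted by both projections, hence by $\sigma$, so $E$ lies among the $E_i$; but these have self-intersection $\le -2$, contradicting $E^2=-1$. If exactly one of $a,b$ vanishes, say $a=0$, then $E^2=\ell^2=-1$, which is impossible: every Hirzebruch--Jung string represents only $0$ and integers $\le -2$, since for a single string with weights $b_i\ge 2$ one has $-\ell^2\ge c_1^2+c_s^2+\sum_i(c_i-c_{i+1})^2\ge 2$ for $\ell\neq 0$, and $L$ is an orthogonal sum of such strings.

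The remaining case $a,b\ge 1$, where $E$ is horizontal for both fibrations, is the decisive one. Here $E^2=2ab\,|G|+\ell^2=-1$ forces $\ell^2\le -3$, while irreducibility gives $\ell\cdot E_i=E\cdot E_i\ge 0$, so by the $M$-matrix property of the Hirzebruch--Jung intersection form all coefficients of $\ell$ are $\le 0$; together with $K_S\cdot E=a(2g_2-2)+b(2g_1-2)+\sum_i c_i(-E_i^2-2)=-1$ these relations are numerically consistent and therefore do not by themselves exclude such a curve. This is where I expect the main obstacle to lie: one must rule out bi-horizontal $(-1)$-curves geometrically. The natural attack is to pull $E$ back to $C_1\times C_2$, where its preimage is numerically $\alpha f_1+\beta f_2$ (with $f_1,f_2$ the two ruling classes and $\alpha,\beta\ge 1$), hence of genus $\ge g_1+g_2-1\ge 3$, and then to show that dividing by $G$ and normalising cannot produce a rational curve with the above numerics as long as $V=H^{1,1}(S)$. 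Controlling the ramification of the quotient map $\tilde E\to\bar E$ in terms of the stabilisers at the singular points of $X$ — precisely the data feeding into $\gamma$ — is the crux.

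For the converse $S\ \text{minimal}\Rightarrow V=H^{1,1}(S)$ I would argue contrapositively, most cleanly when $p_g(S)=0$ (the case of interest here), where $H^{1,1}(S)=\mathrm{NS}(S)_{\mathbb{R}}$. If $V\subsetneq H^{1,1}(S)$ then, since $V$ already carries the unique positive direction, $V^\perp\cap\mathrm{NS}(S)$ is a nonzero negative definite sublattice whose classes are orthogonal to both fibres and to all $E_i$; a class $w$ in it cannot have $w$ or $-w$ effective, because a curve orthogonal to $F_1$ and $F_2$ would be $\sigma$-exceptional and hence already lie in $L$. One then has to convert such a $w$ into an honest $(-1)$-curve via Riemann--Roch and the very same analysis of the singular fibres. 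Producing an actual irreducible $(-1)$-curve, rather than a mere class of negative square, is the second serious gap, and it is for this reason that the statement is only a conjecture: a complete proof would rest on a full classification of the $(-1)$-curves of a product-quotient surface in terms of the singularity data of $X$.
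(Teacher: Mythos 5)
You have not proved this statement, and the paper does not prove it either: it is stated as a conjecture, and the authors establish only the case $p_g=0$ (Theorem \ref{gamma0}), explicitly remarking that they have no conceptual proof --- their argument is a MAGMA enumeration of all surfaces with $p_g=\gamma=0$ and $K_S^2\le 0$ (Table \ref{K2<1}) followed by a case-by-case elimination. Your reduction via Proposition \ref{gamma+pg} to ``$S$ minimal $\iff$ the fibres and the exceptional components span $H^{1,1}(S)$'' is correct, and your decomposition $[E]=aF_1+bF_2+\ell$ is exactly the paper's Proposition \ref{E2KE}. But one of the steps you present as settled is wrong: in the case $a=0$ or $b=0$ you conclude $\ell^2=-1$ is impossible because a Hirzebruch--Jung lattice represents no integer equal to $-1$; this uses integrality of the coordinates of $\ell$, which fails, since $F_1,F_2,A_1,\dots,A_l$ generate only a finite-index sublattice of $\mathrm{NS}(S)$ (in Proposition \ref{E2KE} the coefficients $a_i$ are merely rational). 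The second numerical Godeaux surface of Section \ref{constructions} is a concrete counterexample to that step: its $(-1)$-curves are the central fibre components $E_1=\tfrac15\bigl(F_1-\sum A_j\bigr)$ and $E_2$, for which $b=0$ and $\ell=-\tfrac15\sum A_j$ has $\ell^2=-1$. The paper excludes fibre-contained $(-1)$-curves geometrically, via Proposition \ref{3points} (a rational curve on $X$ meets $\Sing X$ in at least three points) together with Corollary \ref{b-3} and Lemma \ref{-4-3-2}, not by lattice parity. The bi-horizontal case you flag as the crux is indeed where the content lies; the paper resolves it only for the ten surfaces of Table \ref{K2<1}, by feeding \eqref{KE} and \eqref{E2} into Diophantine systems for $(\mu_1,\mu_2)$ and checking they have no integral solutions.

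The converse direction is in worse shape: as you (and the introduction) formulate it, ``$S$ minimal $\Rightarrow p_g+\gamma=0$'' is false without further hypotheses. Any surface isogenous to a product has $l=\gamma=0$ and is minimal, yet may have $p_g>0$ (regular examples exist, e.g.\ suitable $(\ZZ/5\ZZ)^2$-covers with signatures $(0;5,5,5,5)$ and $(0;5,5,5)$ acting freely on the product, giving $\chi=3$, $p_g=2$); for these $V$ has codimension $2p_g>0$ in $H^{1,1}(S)$, so no argument manufacturing a $(-1)$-curve from $V^\perp$ can succeed. In the paper this implication is only ever invoked for $p_g=0$, where it amounts to the classification of \cite{bp} (cf.\ Theorem \ref{classiso}): all minimal families with $p_g=0$ happen to have $\gamma=0$. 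In short, your framework coincides with the paper's tools, but the proposal contains one incorrect exclusion, two essential unresolved gaps, and a converse that is false in the stated generality; it is a plan, not a proof, of a statement the paper itself only conjectures.
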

We shall prove the conjecture for surfaces with vanishing geometric genus (cf. Theorem \ref{gamma0}).

Running our program  for $\gamma=1,2,3$, produces three examples of surfaces of general type, two with $\gamma=1$ 
(including the surface in Theorem \ref{classiso}, 4), and one with $\gamma=2$: the two new examples, 
 both  Numerical Godeaux surfaces, are described
in Section \ref{constructions}. Together with the results \cite{bp} we have 75 families of product-quotient surfaces of general type with $p_g=q=0$ and we conjecture that these are all.

What we can prove, is the following:
\begin{prop}
Let $S$ be a product-quotient surface of general type with $p_g=0$ not among the $75$ families just mentioned.
Then

\begin{itemize}
\item either $\gamma \geq 4$,
\item or $\gamma=3$ and $X$ has a singular point of multiplicity at least $14$,
\item or $\gamma=2$ and $X$ has a singular point of multiplicity at least $45$.

\end{itemize}
\end{prop}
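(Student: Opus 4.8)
The plan is to reduce the statement to a finite, $\gamma$-indexed search and then to carry it out with the algorithm. First I would extract the a priori constraints. As $S$ is of general type, $\chi(\Oh_S)\geq 1$, so $p_g=0$ forces $q=0$; hence both quotient curves $C_1/G$, $C_2/G$ are isomorphic to $\PP^1$ and each covering $C_i\to\PP^1$ is encoded by a spherical system of generators of $G$ with branching signature $(m_{i1},\dots,m_{ir_i})$. Setting $\alpha_i=-2+\sum_j(1-1/m_{ij})>0$, Riemann--Hurwitz gives $2g_i-2=|G|\alpha_i$, and $\chi(\Oh_S)=1$ reads $(g_1-1)(g_2-1)=|G|$, whence $\alpha_1\alpha_2=4/|G|$. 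Since the least positive value of $-2+\sum_j(1-1/m_{ij})$ over all admissible signatures is $1/42$, this yields $|G|\le 4\cdot 42^2=7056$: the classification is genuinely finite, and the entire difficulty is one of feasibility. The value $\gamma=0$ is already disposed of, since by Theorem \ref{gamma0} it is equivalent to $S$ minimal, and the minimal surfaces are the $72$ families of Theorem \ref{classiso}, all among the $75$.

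Next I would organise the enumeration by the basket of singularities of $X$. Because $\gamma$ depends only on this basket (Proposition \ref{gamma+pg}), fixing $\gamma$ restricts the admissible baskets, and each candidate basket in turn prescribes the local monodromies, hence the admissible signatures. For every signature compatible with $(g_1-1)(g_2-1)=|G|$ one enumerates in MAGMA, via Riemann existence, the finitely many pairs of generating vectors of groups $G$ realising it, discards those whose ramification loci are not disjoint (so that $X$ acquires only isolated cyclic quotient singularities), forms the minimal resolution $S$, and tests whether $S$ is of general type. The crucial subtlety is that a single cyclic quotient singularity may have large multiplicity while contributing only a small amount to $\gamma$; thus for $\gamma\geq 2$ the (finite) list of admissible baskets is dominated by configurations containing one singular point of large multiplicity, and it is precisely here that the thresholds $45$ and $14$ enter, as the multiplicities up to which the enumeration was actually completed.

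Running the algorithm then produces the trichotomy. For $\gamma=1$ the search can be completed for all multiplicities, and its general-type output is exactly the non-minimal surface with $K^2_S>0$ of Theorem \ref{classiso} together with one new numerical Godeaux surface, both among the $75$. For $\gamma=2$ with every singular point of multiplicity at most $44$, the only surface of general type produced is the second new numerical Godeaux surface, again among the $75$; for $\gamma=3$ with all multiplicities at most $13$, no surface of general type appears. Consequently, if $S$ is of general type with $p_g=0$ and is not among the $75$ families, then $\gamma\notin\{0,1\}$ and one of the three listed cases must hold.

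The main obstacle is the lack of an a priori bound on the multiplicities of the singularities of $X$ in terms of $\gamma$. The numerical inequality $|G|\le 7056$ does make the problem finite, but the range it leaves open---for instance $45\le n\le 7056$ when $\gamma=2$---is beyond the reach of the present enumeration, and the case $\gamma\geq 4$ is not attempted at all. Removing the thresholds, and thereby turning the proposition into an unconditional statement, would require an effective bound on the multiplicities of the singular points of $X$ as a function of $\gamma$, in the spirit of Conjecture \ref{boundc}; supplying such a bound is exactly what is missing.
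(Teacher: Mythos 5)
Your overall strategy is the one the paper actually follows: for each fixed $\gamma$ the problem is a finite search (finitely many baskets by Remark \ref{boundcardbasket} and Proposition \ref{boundmult}, finitely many signatures by Proposition \ref{bound}, finitely many groups and generating vectors), one runs \emph{ExistingSurfaces}$(0,\gamma,M)$ for $\gamma=1$ (all admissible $M$), $\gamma=2$ ($M\le 44$) and $\gamma=3$ ($M\le 13$), and then excludes, case by case, every output not already among the $75$ families. Two points, however, need repair. First, your numerical setup is wrong: for a product-quotient surface with singularities, $\chi=\frac{(g_1-1)(g_2-1)}{|G|}+\frac{\mu-2\gamma}{4}$ (Proposition \ref{forminf}), not $\chi=\frac{(g_1-1)(g_2-1)}{|G|}$. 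Hence $\chi=1$ gives $(g_1-1)(g_2-1)=\frac{|G|\xi}{4}$ with $\xi=4+2\gamma-\mu$, and the Wiman-type estimate yields $|G|=\xi/(\Theta_1\Theta_2)\le 1764\,\xi\le 1764(4+2\gamma)$: the order of $G$ is bounded only \emph{after} $\gamma$ is fixed. Your claim that the entire classification is ``genuinely finite'' with $|G|\le 7056$ is false, and the absence of such an unconditional bound is precisely why Conjecture \ref{boundc} is posed. (The error is not load-bearing for the trichotomy, which only uses the searches actually completed, but it should not stand.) Relatedly, the reason the $\gamma=1$ search terminates for \emph{all} multiplicities --- which is what lets you drop $\gamma=1$ from the conclusion --- is Proposition \ref{boundmult}: for $\chi=1$ and $\gamma=1\neq 0$ every singularity has multiplicity $n\le 12(4+2-\frac32)=54$, the value of $M$ the paper runs to. You assert this completeness without the bound that makes it true.

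Second, ``tests whether $S$ is of general type'' is not a routine step of the algorithm; it is the bulk of the proof. The program outputs several dozen candidates with $K_S^2<0$ (Tables \ref{K2<1}, \ref{gamma1}, \ref{gamma2}, \ref{gamma2b}, \ref{gamma3}), and for each one must show by hand that it is not of general type, via arguments on $(-1)$-curves and configurations of exceptional curves (Proposition \ref{E2KE}, Proposition \ref{3points}, Corollary \ref{b-3}, Lemma \ref{-4-3-2}); the paper explicitly states that no systematic method is known and that a few cases require substantial effort. Your write-up hides this step entirely. With these two repairs the argument coincides with the paper's.
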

On the way to prove the above we produce a substantial number of product-quotient surffaces not of general type, collected in the 
tables \ref{K2<1},  \ref{gamma1}, \ref{gamma2}, \ref{gamma2b} and \ref{gamma3}.

Coming back to Problem \ref{problem}, this new approach allows to substitute  part 1) of the proposed solution to the classification problem of regular product-quotient surfaces with $p_g=0$ by
 Conjecture \ref{boundc}. 
 \begin{conj}
 There is an explicit function $\Gamma = \Gamma(p_g,q)$ such that, for the quotient model $X$ of every 
product-quotient surface $S$ of general type
$$
\gamma(X) \leq \Gamma(p_g(S),q(S)).
$$
 \end{conj}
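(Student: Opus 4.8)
The plan is to translate the statement into representation theory and then try to bound the resulting quantity using the general-type hypothesis. By the Künneth decomposition of $H^*(C_1\times C_2)$, together with the fact that the (rational) cyclic quotient singularities leave $H^{2,0}$ unchanged and contribute to $H^{1,1}(S)$ only the exceptional classes of the Hirzebruch--Jung strings, one identifies the subspace $V\subseteq H^{1,1}(S)$ of Proposition~\ref{gamma+pg} as the $\sigma$-pullback of $\langle F_1,F_2\rangle$ plus the exceptional classes, whose complement in $H^{1,1}(S)$ is the ``mixed'' invariant part $(W_1\otimes\overline{W_2})^G\oplus(\overline{W_1}\otimes W_2)^G$, where $W_i=H^{1,0}(C_i)$ as a $\CC[G]$-module. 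Since $p_g=\dim(W_1\otimes W_2)^G$ and $\dim(W_1\otimes\overline{W_2})^G=\dim\mathrm{Hom}_G(W_1,W_2)$, the identity $\mathrm{codim}\,V=2(p_g+\gamma)$ of Proposition~\ref{gamma+pg} becomes $\gamma=\dim\mathrm{Hom}_G(W_1,W_2)-p_g$. Writing $m_{i,\rho}$ for the multiplicity of the irreducible $\rho$ in $W_i$ and $\delta_{i,\rho}=m_{i,\rho}-m_{i,\bar\rho}$, a short computation (the self-dual $\rho$ cancel) gives the clean formula
\[
\gamma=\sum_{\{\rho,\bar\rho\}:\ \rho\not\cong\bar\rho}\delta_{1,\rho}\,\delta_{2,\rho},
\]
the sum running over conjugate pairs of non-self-dual irreducibles. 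Thus $\gamma$ is the correlation of the two ``imbalance vectors'' $\delta_i=(\delta_{i,\rho})_\rho$, and Cauchy--Schwarz yields $\gamma\le\sqrt{D_1 D_2}$ with $D_i=\tfrac12\langle\varphi_i-\overline{\varphi_i},\varphi_i-\overline{\varphi_i}\rangle$, where $\varphi_i$ is the character of $W_i$.

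Next I would bound each $D_i$ by the covering data of $\pi_i\colon C_i\ra C_i/G$ alone. The Chevalley--Weil formula expresses $\varphi_i$ as $(g_i'-1)\varphi_{\mathrm{reg}}$ (with $g_i'$ the genus of $C_i/G$) plus a self-conjugate trivial part plus a sum $\sum_j\Theta_{i,j}$ of explicit characters attached to the branch points and their local monodromy. Since $\varphi_{\mathrm{reg}}$ and the trivial part are self-conjugate, they drop out of $\varphi_i-\overline{\varphi_i}$, so
\[
D_i=\tfrac12\Big\langle\textstyle\sum_j(\Theta_{i,j}-\overline{\Theta_{i,j}}),\ \sum_j(\Theta_{i,j}-\overline{\Theta_{i,j}})\Big\rangle
\]
depends only on the branching type of $\pi_i$ (the number $r_i$ of branch points and their orders), not on the genera or on $|G|$ in bulk. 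Equivalently, the holomorphic Lefschetz fixed-point formula gives $\varphi_i(g)-\overline{\varphi_i(g)}=\sum_{p\in\mathrm{Fix}(g)}\big(\tfrac{1}{1-\lambda_p}-\tfrac{1}{1-\overline{\lambda_p}}\big)$, exhibiting $D_i$ as a sum of local contributions of the points of $C_i$ with non-trivial stabiliser, i.e. of precisely the data that produces the singularities of $X$. This refines the assertion of Proposition~\ref{gamma+pg} that $\gamma$ depends only on $\Sing(X)$.

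The remaining, and decisive, step is to bound this branching/singularity data by $p_g$ and $q$ using that $S$ is of general type. The natural inputs are $q=g_1'+g_2'$, Noether's formula $12\chi(\hol_S)=K_S^2+e(S)$ with $\chi(\hol_S)=1-q+p_g$, and the Miyaoka--Yau inequality for the minimal model of $S$, combined with $K_X^2=8(g_1-1)(g_2-1)/|G|$ and $K_S^2=K_X^2-\sum_p k_p$. One would like to convert a bound on $\chi(\hol_S)$ into a bound on the $\Theta_{i,j}$, hence on $D_1,D_2$, and conclude by Cauchy--Schwarz with the explicit choice $\Gamma(p_g,q)=\sqrt{D_1 D_2}$ expressed through $p_g,q$.

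Here lies the main obstacle, and the reason the statement is only conjectural. The quantity $K_X^2=K_S^2+\sum_p k_p$ is \emph{not} bounded in terms of $\chi(\hol_S)$: the basket corrections $\sum_p k_p$ can be made arbitrarily large (the surface being non-minimal, with many deep Hirzebruch--Jung strings), so neither $r_i$, nor the orders of the stabilisers, nor the naive upper bound for $D_i$ stay bounded when $p_g,q$ are fixed. The Cauchy--Schwarz estimate is therefore too lossy, and must be replaced by a direct bound on the \emph{signed} correlation $\sum\delta_{1,\rho}\delta_{2,\rho}$, showing that although each $D_i$ may grow with the singularities, the imbalance vectors $\delta_1,\delta_2$ cannot stay strongly aligned for a surface of general type with fixed $p_g,q$. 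I would attack this by reading the mixed classes as $G$-equivariant correspondences $W_1\to W_2$ (graphs of isogeny factors of the two Jacobians) and seeking a positivity constraint, via $K_S$ and the Hodge index theorem on $H^{1,1}(S)$, limiting their number; but producing such an a priori inequality that survives $K_X^2\to\infty$ is exactly what is missing, which is why this is recorded as Conjecture~\ref{boundc}.
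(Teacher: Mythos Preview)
This statement is Conjecture~\ref{boundc}, which the paper leaves open; there is no proof in the paper to compare against. You correctly recognise this, and your proposal is, appropriately, not a proof but a strategy together with an honest diagnosis of where it stalls.

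Your representation-theoretic reformulation is correct and goes somewhat further than the paper makes explicit. Combining the K\"unneth decomposition in the proof of Proposition~\ref{parity} with the identity $h^2(X,\CC)=2(\gamma+2p_g+1)$ stated in the remark following Conjecture~\ref{boundc}, one indeed obtains $\gamma=\dim(W_1\otimes\overline{W_2})^G-p_g$, and your signed-multiplicity formula $\gamma=\sum_{\{\rho,\bar\rho\}}\delta_{1,\rho}\,\delta_{2,\rho}$ follows. This is a genuinely different vantage point from the one the paper adopts in Section~\ref{evidences}, where the partial results towards the conjecture are obtained by intersection theory: one writes the Zariski decomposition $K_S=P+N$, bounds $N\sigma^*K_X\geq\nu\delta$ via a lower bound $\delta$ for $E\sigma^*K_X$ over all $(-1)$-curves $E$, and feeds this into inequality~(\ref{1+delta}), which yields $\gamma\leq 8\chi-1$ as soon as one can take $\delta=5$. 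The paper then secures $\delta=5$ only under the extra hypothesis $\Theta_1,\Theta_2\geq\tfrac52$, and notes that the fake Godeaux example (with $E_1\sigma^*K_X=1$) obstructs a uniform $\delta=5$. Your approach trades this geometric control of exceptional curves for a purely character-theoretic problem, which has the advantage of making the dependence on $\Sing X$ transparent (via Chevalley--Weil or holomorphic Lefschetz) but the disadvantage that, as you observe, neither $D_i$ nor the branching data is bounded by $p_g,q$ alone.

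The obstruction you isolate is the right one and is parallel to the paper's: in the paper's language the missing ingredient is a uniform lower bound for $E\sigma^*K_X$ (or more precisely for $N\sigma^*K_X/\nu$) without hypotheses on the $\Theta_i$, while in yours it is a positivity constraint forcing the imbalance vectors $\delta_1,\delta_2$ to decorrelate when $p_g,q$ are fixed. Neither route is known to close, so your conclusion that the statement remains conjectural is accurate.
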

 We give some motivation to this Conjecture in Section \ref{evidences}, proving the above conjecture under some additional hypothesis.
 
  Finally in section 
\ref{duality} we construct a duality among regular product-quotient surfaces allowing, among other things, 
to give a new interpretation of the "half-codimension" $p_g+\gamma$, which in fact turns out to be equal to the geometric genus of the dual 
product-quotient surface.

\section{Notation}

This chapter is dedicated to fix once and for all the notation, which should be valid througout the paper.

Let $C$ be an algebraic curve, $G$ a finite group acting faithfully on it, 
$C'=C/G$. We associate to the pair $(C,G)$, after certain choices on 
$C/G$ (\cite[Section 4]{surveypg=0} for details), an 
\begin{itemize}
\item {\it appropriate orbifold homomorphism} 
$\varphi \colon {\mathbb T}(g(C/G); m_1, 
\ldots, m_r) \rightarrow G$,
\end{itemize}
 which allows (up to the above made choices) to reconstruct $(C,G)$.
  
Equivalently, one can give 
\begin{itemize}
\item a {\it generating vector} (\cite[Definition 1.1]{PolizziNumProp}) 
of $G$ of {\it signature} (or {\it type}) $(g(C/G); m_1, \ldots, m_r)$, 
\end{itemize}
where 
$g(C/G)$ is the genus of the quotient curve. 

We will say that the action of $G$ on $C$ has {\em signature} $(g(C/G); m_1, \ldots, m_r)$.

We will also need the number 
\begin{itemize}
\item $
\Theta:=\Theta(g(C/G); m_1, \ldots, m_r) := 2(g(C/G))-2+\sum \left( 1 - \frac{1}{m_i} \right)>0
$,
\end{itemize}
which relates the genus of $C$ and the order of $G$ by the Hurwitz formula
\begin{equation}\label{Hurwitz}
2g(C)-2=|G|\Theta
\end{equation}

In the following $C_1$, $C_2$ will be two algebraic curves of respective genera $g_1,g_2 \geq 2$, $G$ a finite group
acting faithfully on both curves. 

We consider the quotient surface $X:=(C_1 \times C_2)/G$ by the diagonal action, and the minimal 
resolution of its singularities $\sigma \colon S \rightarrow X$. We will refer to $S$ as 
\begin{itemize}
\item a 
{\it product-quotient} surface and 
\item to $X$ as its {\it quotient model}. 
\end{itemize}
We will denote by $\overline{S}$ 
the minimal model of $S$.

As usual, $p_g(S)$ (or simply $p_g$) will be the geometric genus $h^2(\hol_S)$, and 
$q(S)$ (or simply $q$) will be the irregularity $h^1(\hol_S)$. We will also denote by $\chi$ or $\chi(S) = 1-q+p_g$ the Euler 
characteristic of the structure sheaf of $S$.

 We will say that the quotient model $X$ has {\em type}
$$
\left((g(C_1/G); m_1, \ldots, m_r), (g(C_2/G); n_1, \ldots, n_s) \right),
$$
if the action of $G$ on $C_1$ has signature $(g(C_1/G); m_1, \ldots, m_r)$ and 
the action of $G$ on $C_2$ has signature $(g(C_2/G); n_1, \ldots, n_s)$; we will write 
$\Theta_1$ for $\Theta(g(C_1/G);$ $m_1, \ldots, m_r)$ and $\Theta_2$ for $\Theta(g(C_2/G); 
n_1, \ldots, n_s)$.

All singularities of $X$ are cyclic quotient singularities, 
locally isomorphic to the quotient of $\CC^2$ by the cyclic group generated by $(x,y) \mapsto 
(e^\frac{2\pi i}n x,e^\frac{2q\pi i}n y)$ for two relatively prime positive integers 
$q,n$ with $q<n$. 
We will say that the singularity is of type $\frac{q}{n}$, instead of using the classical notation 
$\frac{1}{n}(1,q)$. 

We denote by  $q'$ the integer between  $1$ and $n-1$ which is the multiplicative inverse of $q$ 
modulo $n$, whence a singularity of type  $\frac{q}{n}$ is also of type  $\frac{q_1}{n_1}$ if and only 
if $n=n_1$ and $q_1$ is either $q$ or $q'$.

We associate four numbers to each cyclic quotient singularity, depending only on its type.
\begin{defin}\label{numbers}
For each rational number $0<\frac{q}{n}<1$ we consider its continued fraction
$$
\frac{n}{q} = b_1 - \frac{1}{b_2 - \frac{1}{b_3 - \ldots}} =:[b_1, \ldots, b_l];
$$
writing $\frac{q}{n}=[b_1,\ldots,b_l]$, $b_i \in \NN$, $b_i \geq 2$;
then we define $l\left(\frac{q}{n}\right)$, $\gamma\left(\frac{q}{n}\right)$, 
$\mu\left(\frac{q}{n}\right)$, $I\left(\frac{q}{n}\right)$ as follows.
\begin{itemize}
\item $l\left(\frac{q}{n}\right)$ is the length of the continued fraction;
\item $\gamma\left(\frac{q}{n}\right):=\frac16 \left[ \frac{q+q'}n + \sum_{i=1}^l (b_i-3)\right]$;
\item $\mu\left(\frac{q}{n}\right)=1-\frac1n$.
\item $I\left(\frac{q}{n}\right)=\frac{n}{\gcd(n,q+1)}$.
\end{itemize}
\end{defin}
It is well known that if $\frac{q}{n}=[b_1,\ldots,b_l]$, then  $\frac{q'}{n}=[b_l,\ldots,b_1]$.
It follows immediately  that $l,\gamma$, $\mu$ and $I$ do not change when substituting $q$ with $q'$, and therefore the following definition is well posed.
\begin{defin}
Let $x$ be a singular point of $X$, of type $\frac{q}{n}$.
Then we define $l_x:=l\left(\frac{q}{n}\right)$; $\gamma_x:=\gamma\left(\frac{q}{n}\right)$;
$\mu_x:=\mu\left(\frac{q}{n}\right)$; $I_x:=I\left(\frac{q}{n}\right)$.
\end{defin}

The {\em basket of singularities} $\mathfrak B$ of the quotient model $X$ is the {\it multiset} 
(which means that it is a set and moreover a positive 
integer is associated to each of its elements, its {\it multiplicity}) of rational numbers. For example, a surface with two nodes has basket $\{2 \times \frac12\}$.

We globalize $l$, $\gamma$, $\mu$ and $I$ as follows.

\begin{defin}[\bf Invariants of the basket $\mathfrak{B}$]
Let ${\mathfrak B}$ be the basket of singularities of the quotient model $X$ of a product-quotient surface $S$. Then
$$
l(X):=\sum_{x \in {\mathfrak B}} l_x;\ \
\gamma(X):=\sum_{x \in {\mathfrak B}} \gamma_x;\ \
\mu(X) :=\sum_{x \in {\mathfrak B}} \mu_x;\ \
I(X) :=\lcm_{x \in {\mathfrak B}} I_x.
$$
\end{defin}
\begin{rem}
$I$ is the index of $X$, the minimal positive integer such that $IK_X$ is Cartier.
It is  the only number, among the numbers defined in Definition \ref{numbers}, which was already considered in \cite{bp}. The numbers $l$ ,$\gamma$ and $\mu$ are convenient substitutes of the numbers $e$, $k$, and $B$ considered in \cite{bp}.
For the convenience of the reader, we recall the definition of $e$, $k$ and $B$ in terms of them:
\begin{equation}\label{ekB=lmg}
e=l+\mu;\ \ \
k=6\gamma+l-2\mu; \ \ \
B=3\left(2\gamma+l\right).
\end{equation}
\end{rem}

\section{Hodge theory of Product-Quotient Surfaces}
We start with the following:

\begin{prop}\label{h2}\ 
 \begin{enumerate}
\item $\forall k \neq 2$, $H^k(S, \CC) \cong H^k (X, \CC)$.
 \item $H^2(S, \CC) \cong H^2 (X, \CC) \oplus \CC^{l(X)}$
 \end{enumerate}
\end{prop}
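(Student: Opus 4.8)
The plan is to compare the cohomology of $S$ and $X$ through the Leray spectral sequence of the resolution $\sigma\colon S\to X$,
$$
E_2^{p,q}=H^p\bigl(X,R^q\sigma_*\underline{\CC}_S\bigr)\Longrightarrow H^{p+q}(S,\CC),
$$
so the first step is to compute the higher direct image sheaves. Since $\sigma$ is an isomorphism over the smooth locus $X\setminus\Sing X$ and, over each singular point $x$ of type $\frac{q}{n}$, the fibre $E_x:=\sigma^{-1}(x)$ is the Hirzebruch--Jung string resolving the cyclic quotient singularity, I would use that a small neighbourhood of $E_x$ retracts onto $E_x$, so the stalk of $R^q\sigma_*\underline{\CC}_S$ at $x$ is $H^q(E_x,\CC)$. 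A Hirzebruch--Jung string is a chain of $l_x$ rational curves, hence simply connected, with $H^0(E_x)=\CC$, $H^1(E_x)=0$ and $H^2(E_x)=\CC^{l_x}$. Therefore $R^0\sigma_*\underline{\CC}_S=\underline{\CC}_X$, $R^1\sigma_*\underline{\CC}_S=0$, $R^q\sigma_*\underline{\CC}_S=0$ for $q\geq 3$, and $R^2\sigma_*\underline{\CC}_S$ is the skyscraper sheaf $\bigoplus_{x\in\Sing X}\CC^{l_x}$, whose only nonzero cohomology is $H^0=\CC^{l(X)}$.

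Feeding this into the $E_2$ page leaves only the bottom row $E_2^{p,0}=H^p(X,\CC)$ and the single entry $E_2^{0,2}=\CC^{l(X)}$ nonzero: the whole $q=1$ row vanishes, and for $q=2$ only $p=0$ survives. Granting that the spectral sequence delivers these pieces without interference, every $H^k(S,\CC)$ with $k\neq 2$ receives only the graded piece $E_\infty^{k,0}=H^k(X,\CC)$, giving part (1), while $H^2(S,\CC)$ additionally receives $E_\infty^{0,2}=\CC^{l(X)}$, giving part (2); the resulting extension of vector spaces splits automatically over $\CC$, with $\sigma^*$ realizing $H^2(X,\CC)$ as the subspace $E_\infty^{2,0}$.

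The main obstacle is precisely to justify that the surviving entries are undisturbed, i.e.\ that the only differential which could touch them, $d_3\colon E_3^{0,2}\to E_3^{3,0}=H^3(X,\CC)$, vanishes; equivalently, that $E_\infty^{0,2}=E_2^{0,2}$. I would settle this geometrically rather than by a weight argument. The edge homomorphism $H^2(S,\CC)\to E_2^{0,2}=\bigoplus_x H^2(E_x,\CC)=H^2(E,\CC)$, where $E=\bigsqcup_x E_x$, is the restriction map to the exceptional locus. Restricting the algebraic classes $[E_i]$ of the irreducible components of $E$, one computes $[E_i]|_{E}=\sum_j (E_i\cdot E_j)\,[\mathrm{pt}_j]$ in $H^2(E,\CC)=\bigoplus_j\CC\,[\mathrm{pt}_j]$; since the intersection matrix $(E_i\cdot E_j)$ of a union of Hirzebruch--Jung strings is negative definite, hence invertible, these restrictions span $H^2(E,\CC)$. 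Thus the edge map is surjective onto $E_2^{0,2}$, which forces $E_\infty^{0,2}=E_2^{0,2}$ and hence $d_3=0$. The same vanishing yields $E_\infty^{3,0}=H^3(X,\CC)$ together with the injectivity of $\sigma^*$ on each $H^p$, completing both statements.
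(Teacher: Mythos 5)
Your proof is correct, but it runs along a genuinely different track from the paper's. The paper avoids the Leray spectral sequence entirely: it decomposes $X=X^\circ\cup U$ and $S=S^\circ\cup V$ (with $U$ a union of contractible neighbourhoods of the singular points and $V$ its preimage), writes the two Mayer--Vietoris sequences, compares them via the maps induced by $\sigma$, and extracts the statement from the Five and Four Lemmas plus a dimension count. Your route instead computes $R^q\sigma_*\underline{\CC}_S$ by the retraction of a neighbourhood onto the Hirzebruch--Jung fibre and reads the answer off a two-row $E_2$ page. The decisive geometric input is, however, identical in both arguments: the restriction map from $H^2(S,\CC)$ to the second cohomology of the exceptional locus is surjective because the intersection matrix of the exceptional curves is negative definite, hence invertible. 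In the paper this surjectivity is what forces the connecting map $a_3$ to be injective in the Mayer--Vietoris comparison; in your version it is what kills the only possibly nonzero differential $d_3\colon E_3^{0,2}\to E_3^{3,0}$, and your identification of the edge homomorphism with restriction to the exceptional locus is the right way to make that precise. What your approach buys is economy and transparency of bookkeeping --- the degenerate $E_2$ page makes all the isomorphisms for $k\neq 2$ and the splitting in degree $2$ visible at once, and the argument generalizes verbatim to any resolution whose fibres are trees of rational curves; what the paper's approach buys is elementarity, using only long exact sequences and diagram chases with no spectral-sequence formalism. Both are complete proofs.
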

\begin{proof}
1) Let $X^{\circ}$ be the smooth locus of $X$. For each singular point $x$ of $X$, choose a small neighbourhood 
$U_x$ of $x$ which may be retracted to the point $x$, set 
\begin{itemize}
\item $U:=\bigcup U_x$ and define 
\item $U_x^\circ:=U_x \setminus \{x\} = U_x \cap X^\circ$, 
\item $U^\circ=U \cap X^\circ$. 
\end{itemize}
We also consider 
\begin{itemize}
\item $S^\circ:=\sigma^{-1}(X^\circ)$, 
\item $V_x:=\sigma^{-1}(U_x)$,  $V_x^\circ:=\sigma^{-1}(U_x^\circ)$, 
\item $V:=\sigma^{-1}(U)$, $V^\circ:=\sigma^{-1}(U^\circ)$.
\end{itemize}
The Mayer-Vietoris exact sequences  corresponding to the decompositions 
$$X=X^\circ \cup U, \ \ S=S^\circ \cup V$$
give a commutative diagram
\begin{small}
\begin{equation}\label{mv}
\xymatrix{
H^{k-1}(X^\circ) \oplus H^{k-1}(U) \ar[r] \ar^{b_{k-1} \oplus c_{k-1}}[d]&H^{k-1}(U^\circ) \ar[r] \ar^{d_{k-1}}[d] &H^k(X) \ar[r] \ar^{a_k}[d] &H^k(X^\circ) \oplus H^k(U) \ar[r] \ar^{b_k \oplus c_k}[d] & H^k(U^\circ)\ar^{d_k}[d]\\
H^{k-1}(S^\circ) \oplus H^{k-1}(V)\ar[r]&H^{k-1}(V^\circ) \ar[r]&H^k(S) \ar[r]&H^k(S^\circ) \oplus H^k(V) \ar[r]&H^k(V^\circ)\\
}
\end{equation}
\end{small}
whose vertical maps are the maps induced  in cohomology by the suitable restrictions of $\sigma$.

Since $\sigma_{|S^\circ}$ and $\sigma_{|V^\circ}$ are homeomorphisms, all the maps $b_q$ and $d_q$ are isomorphisms. Moreover, since $U_x$ retracts to a point and $V_x$ to a tree of $l_x$ rational curves, $c_k$ is an isomorphism for all $k \neq 2$, and the (injective) map $0 \rightarrow \CC^l$ for $k=2$.

By the Five Lemma, it follows that all maps $a_k$ with $k \neq 2,3$ 
are isomorphisms, while the Four Lemmas imply that $a_2$ is injective and $a_3$ is surjective.

Let $A_1, \ldots, A_l$ be the exceptional divisors of $\sigma$. 
Since $V$ retracts to the union of the $A_i$, the inclusions yield an isomorphism 
$H^2(V) \cong \oplus_1^l H^2(A_i)$, so $H^2(V)\cong \CC^l$. Moreover, identifying by Poincar\'e 
duality $H^2(S)$ with $H_2(S)^*$, the map $H^2(S) \rightarrow H^2(A_i) \cong \CC$ induced by the inclusion 
sends each operator $\phi$ onto $\phi(A_i)$. Since the intersection form on the $A_i$ is negative 
definite, it follows that the map $H^2(S) \rightarrow H^2(V) \cong \oplus H^2(A_i)$ is surjective.

Then a standard diagram chasing argument on (\ref{mv}) shows that $a_3$ is injective, so an 
isomorphism.

2) We have just shown that all maps $a_k$,$b_k$, $c_k$ and $d_k$ are isomorphisms, except $a_2$ and 
$c_2$. Moreover, $a_2$ and $c_2$ are injective, and $\dim (\coker c_2)=l$. Since the alternating sum of the dimensions of the vector 
spaces in a finite exact sequence is zero, comparing the two long exact sequences in (\ref{mv}) we obtain 
$\dim H^2(S) = \dim H^2(X) + l$. 
\end{proof}

We shall focus now on $H^2(X,\CC)$.

\begin{prop}\label{parity}\ 
\begin{itemize}
 \item $\dim H^2(X,\CC) \equiv 0 \mod 2$,
\item $\dim H^2(X,\CC)) \geq 2$.
\end{itemize}

\end{prop}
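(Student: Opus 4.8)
The plan is to pass to the smooth covering $Y := C_1 \times C_2$ and exploit the fact that the rational cohomology of a finite quotient is the invariant part of the cohomology of the cover. First I would record the basic isomorphism $H^k(X,\QQ) \cong H^k(Y,\QQ)^G$, valid because $\pi \colon Y \to X$ is a finite quotient map, so that the transfer $\tfrac{1}{|G|}\pi_*$ inverts $\pi^*$ rationally; in particular $H^2(X,\CC)\cong H^2(Y,\CC)^G$ as Hodge structures.

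Next I would feed in the K\"unneth decomposition
\[
H^2(Y,\CC) = \bigl(H^2(C_1)\otimes H^0(C_2)\bigr)\oplus\bigl(H^1(C_1)\otimes H^1(C_2)\bigr)\oplus\bigl(H^0(C_1)\otimes H^2(C_2)\bigr).
\]
Since $G$ acts by holomorphic, hence orientation-preserving, automorphisms, it acts trivially on $H^0(C_i,\CC)$ and on the top classes $H^2(C_i,\CC)$. Thus the first and last summands each contribute exactly one $G$-invariant dimension, namely the classes of the two fibres $F_1,F_2$; these are linearly independent because they sit in distinct K\"unneth components. This already gives $\dim H^2(X,\CC)=2+\dim\bigl(H^1(C_1)\otimes H^1(C_2)\bigr)^G\ge 2$, proving the second bullet.

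For the parity I would invoke the Hodge decomposition $H^1(C_i,\CC)=H^{1,0}(C_i)\oplus H^{0,1}(C_i)$. As $G$ acts holomorphically it preserves these summands, so writing $U_i:=H^{1,0}(C_i)$ and $\overline{U_i}=H^{0,1}(C_i)$ the invariants split as
\[
\bigl(H^1(C_1)\otimes H^1(C_2)\bigr)^G=(U_1\otimes U_2)^G\oplus(U_1\otimes\overline{U_2})^G\oplus(\overline{U_1}\otimes U_2)^G\oplus(\overline{U_1}\otimes\overline{U_2})^G.
\]
Now $H^1(C_i,\CC)$ is the complexification of the rational representation $H^1(C_i,\QQ)$, so complex conjugation commutes with the $G$-action and sends $G$-invariants to $G$-invariants, while exchanging $U_i\leftrightarrow\overline{U_i}$. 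Hence conjugation furnishes conjugate-linear isomorphisms $(U_1\otimes U_2)^G\cong(\overline{U_1}\otimes\overline{U_2})^G$ and $(U_1\otimes\overline{U_2})^G\cong(\overline{U_1}\otimes U_2)^G$, so the four summands have dimensions $a,b,b,a$. Therefore the total is $2(a+b)$ and $\dim H^2(X,\CC)=2+2(a+b)$ is even. (One also sees that $a=\dim(U_1\otimes U_2)^G=h^0(Y,\Omega^2_Y)^G=p_g$, which already foreshadows the formula $2(p_g+\gamma)$ of the introduction.)

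The only genuinely delicate point is the first step: justifying $H^k(X,\QQ)\cong H^k(Y,\QQ)^G$ despite $X$ being singular, and checking that the induced Hodge structure on $H^2(X)$ is precisely the $G$-invariant part of that on $H^2(Y)$. This is standard for finite quotients via the transfer argument, but it is where I would be careful; everything afterwards is elementary once conjugation is seen to pair the K\"unneth--Hodge summands two by two.
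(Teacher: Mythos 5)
Your proof is correct and follows essentially the same route as the paper: both identify $H^2(X,\CC)$ with $H^2(C_1\times C_2,\CC)^G$, extract the two fibre classes from the outer K\"unneth summands to get the bound $\geq 2$, and obtain the parity by pairing the remaining Hodge pieces two by two. The only cosmetic difference is that you use complex conjugation relative to the rational structure to pair $(U_1\otimes U_2)^G$ with $(\overline{U_1}\otimes\overline{U_2})^G$ and $(U_1\otimes\overline{U_2})^G$ with $(\overline{U_1}\otimes U_2)^G$ in one stroke, whereas the paper handles the first pair via the duality $H^2(\hol_{C_1\times C_2})\cong H^0(\Omega^2_{C_1\times C_2})^*$ and the second via the conjugate-character argument ($V\oplus\bar V$) --- the same observation in slightly different clothing.
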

\begin{proof}
By the Hodge decomposition we know that
\begin{align*}
H^2(C_1 \times C_2,\CC) \cong& H^0(\Omega^2_{C_1 \times C_2}) \oplus H^1(\Omega^1_{C_1 \times C_2}) 
\oplus H^2(\hol_{C_1 \times C_2})\\
\cong & H^0(\Omega^2_{C_1 \times C_2}) \oplus H^1(\Omega^1_{C_1 \times C_2}) 
\oplus H^0(\Omega^2_{C_1 \times C_2})^*.
\end{align*}
Therefore the $G$-invariant part of $H^2(C_1 \times C_2,\CC)$ decomposes as 
\begin{align*}
H^2(X,\CC)) \cong & H^2({C_1 \times C_2},\CC)^G \\
\cong & H^0(\Omega^2_{C_1 \times C_2})^G \oplus H^1(\Omega^1_{C_1 \times C_2})^G
\oplus (H^0(\Omega^2_{C_1 \times C_2})^*)^G \\
\cong & H^0(\Omega^2_{C_1 \times C_2})^G \oplus H^1(\Omega^1_{C_1 \times C_2})^G
\oplus (H^0(\Omega^2_{C_1 \times C_2})^G)^*.
\end{align*}

Therefore, writing as usual $h^q$ for the dimension of $H^q$, $h^2(X,\CC) = 2 \cdot h^0(\Omega^2_{C_1\times C_2})^G + h^1(\Omega^1_{C_1\times C_2})^G$, whence the claim is proven once we show that $h^1(\Omega^1_{C_1\times C_2})^G \equiv 0 \mod 2$.

We recall that 
by K\"unneth's formula (cf. e.g. \cite{kaup}) and Hodge theory

\begin{align*}
H^1(\Omega^1_{C_1 \times C_2})\cong& \left( H^1(\Omega^1_{C_1}) \otimes H^0(\hol_{C_2}) \right) \oplus
\left( H^1(\Omega^1_{C_2}) \otimes H^0(\hol_{C_1}) \right)\\
\oplus& \left( H^0(\Omega^1_{C_1}) \otimes H^1(\hol_{C_2}) \right) \oplus
\left( H^0(\Omega^1_{C_2}) \otimes H^1(\hol_{C_1}) \right)\\
\cong& \left( H^1(\Omega^1_{C_1}) \otimes H^0(\hol_{C_2}) \right) \oplus
\left( H^1(\Omega^1_{C_2}) \otimes H^0(\hol_{C_1}) \right)\\
\oplus& \left( H^0(\Omega^1_{C_1}) \otimes \overline{H^0(\Omega^1_{C_2})} \right) \oplus \left( \overline{H^0(\Omega^1_{C_1})} \otimes H^0(\Omega^1_{C_2}) \right).\\
\end{align*}

It is wellknown that if $\chi$ is the character of the $G$-module  $H^0(\Omega^1_{C_1})$, then $\bar{\chi}$ is the character of the  the $G$-module  $\overline{H^0(\Omega^1_{C_1})}$. From this fact it follows that
$$
\left( H^0(\Omega^1_{C_1}) \otimes \overline{H^0(\Omega^1_{C_2})} \right)^G \oplus \left( \overline{H^0(\Omega^1_{C_1})} \otimes H^0(\Omega^1_{C_2}) \right)^G \cong V \oplus \bar{V},
$$
where

$$
V:= \left( H^0(\Omega^1_{C_1}) \otimes \overline{H^0(\Omega^1_{C_2})} \right)^G.
$$

Since the fundamental class of $C_i$ is $G$-invariant, we have
$$
\left( H^1(\Omega^1_{C_1}) \otimes H^0(\hol_{C_2}) \right) \oplus
\left( H^1(\Omega^1_{C_2}) \otimes H^0(\hol_{C_1}) \right) =
$$

$$
\left( H^1(\Omega^1_{C_1}) \otimes H^0(\hol_{C_2}) \right)^G \oplus
\left( H^1(\Omega^1_{C_2}) \otimes H^0(\hol_{C_1}) \right)^G \cong \CC^2.
$$

This proves the claim.
\end{proof}

Consider the inclusion 
$$
j \colon U:= X \setminus \Sing(X) \rightarrow X
$$
and define $\tilde{\Omega}^p_X:= j_* \Omega^p_U$. 

\begin{theo}[\cite{steenbrink}, (1.10),(1.11), (1.12)]\label{steenbrink}
\begin{enumerate}
 \item $\tilde{\Omega}^p_X$ is coherent for all $p$;
\item $\tilde{\Omega}^p_X = \sigma_* \Omega^p_S$, $\forall$ p;
\item $\tilde{\Omega}^p_X = (\pi_* \Omega^p_{C_1 \times C_2})^G$;
\item there is a morphism of spectral sequences 
\begin{equation*}\label{boh}
\xymatrix{
E^{pq}_1 =H^q(X,\tilde{\Omega}^p_X) \ar^{\sigma^*}[d] & \implies & H^{p+q}(X,\CC)\ar^{\sigma^*}[d]\\
E'^{pq}_1 =H^q(S,\Omega^p_S)  & \implies & H^{p+q}(S,\CC).\\ 
}
\end{equation*}
which is injective on $E_1$-level.
\end{enumerate}
\end{theo}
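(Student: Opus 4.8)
The plan is to treat the three sheaf-theoretic statements (1)--(3) as local assertions about the cyclic quotient singularities of $X$, and to deduce the spectral-sequence statement (4) from the degeneration of the Fr\"olicher spectral sequences of $C_1 \times C_2$ and of $S$, combined with the injectivity of $\sigma^*$ on cohomology already obtained in Proposition~\ref{h2}.

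First I would establish (3), together with the coherence in (1). Write $\pi \colon C_1 \times C_2 \to X$ for the quotient map and $Y := C_1 \times C_2$. Since $\pi$ is finite and $Y$ is smooth, $\pi_* \Omega^p_Y$ is coherent, and taking $G$-invariants (an exact operation in characteristic $0$, realized by the averaging idempotent) produces a coherent direct summand $(\pi_* \Omega^p_Y)^G$, which is moreover reflexive as the invariant part of the pushforward of a locally free sheaf. Over the smooth locus $U$ it restricts to $\Omega^p_U$, because invariant forms on a cover descend to regular forms on a smooth quotient. As $\Sing(X)$ has codimension $2$ and a reflexive sheaf on a normal variety equals the $j_*$ of its restriction to the complement of a codimension $\geq 2$ subset, we conclude $(\pi_* \Omega^p_Y)^G = j_* \Omega^p_U = \tilde{\Omega}^p_X$, which is (3) and gives the coherence in (1).

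The crucial point is (2). There is a canonical inclusion $\sigma_* \Omega^p_S \hookrightarrow \tilde{\Omega}^p_X$: both are subsheaves of the sheaf of rational $p$-forms, they agree on $U$ (where $\sigma$ is an isomorphism), and every section of $\sigma_* \Omega^p_S$ is regular on $U$, hence lies in $j_* \Omega^p_U$. For the reverse inclusion one must show that a $p$-form holomorphic on $U$ pulls back to a form on $S^\circ := \sigma^{-1}(U)$ extending holomorphically across the exceptional divisor. This is the extension theorem for (log terminal, in particular rational) quotient singularities. Since the singularities of $X$ are the explicit cyclic quotients of type $\frac{q}{n}$, whose Hirzebruch--Jung resolutions are encoded in the continued-fraction data of Definition~\ref{numbers}, I would verify the extension directly on each Hirzebruch--Jung string by a local coordinate computation, or invoke the general result of Steenbrink. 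I expect this to be the main obstacle: the inclusion is formal, whereas its surjectivity is exactly where the rational nature of the singularities is needed.

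Finally, for (4): pullback of forms defines, via (2), the edge morphism $\sigma^* \colon H^q(X, \tilde{\Omega}^p_X) = H^q(X, \sigma_* \Omega^p_S) \to H^q(S, \Omega^p_S)$, and these assemble into a morphism of Fr\"olicher spectral sequences compatible with $\sigma^*$ on $H^{p+q}(-, \CC)$. Using (3) and the finiteness of $\pi$, one identifies $E_1^{pq} = H^q(X, \tilde{\Omega}^p_X) \cong H^q(Y, \Omega^p_Y)^G$, with abutment $H^{p+q}(Y, \CC)^G \cong H^{p+q}(X, \CC)$. Since the Fr\"olicher spectral sequence of the smooth projective surface $Y$ degenerates at $E_1$ and taking $G$-invariants is exact, the spectral sequence of $X$ degenerates at $E_1$ as well, and likewise for the smooth projective $S$. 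Hence both $E_1$ pages are the Hodge-graded pieces $\mathrm{Gr}_F^p$ of their abutments. The map $\sigma^*$ on $H^{p+q}(-, \CC)$ is injective by the injectivity of the maps $a_k$ in the proof of Proposition~\ref{h2}, and it is a morphism of pure Hodge structures, hence strict for the Hodge filtration. Strictness together with injectivity on the abutment forces injectivity on each graded piece $E_1^{pq} = \mathrm{Gr}_F^p H^{p+q}$, which is precisely the desired injectivity on the $E_1$ level.
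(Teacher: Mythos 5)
The paper gives no proof of this statement: it is quoted directly from Steenbrink's article (items (1.10)--(1.12) there), so there is no internal argument to measure yours against. Judged on its own, your reconstruction follows what is essentially Steenbrink's line and is sound in outline. Your treatment of (1) and (3) is correct and clean: $\pi_*\Omega^p_{C_1\times C_2}$ is $S_2$ because finite pushforward preserves depth, its $G$-invariant part is a direct summand hence again reflexive, and a reflexive sheaf on a normal variety is recovered as $j_*$ of its restriction to the complement of the (codimension~$2$) singular locus; the identification over $U$ uses that the stabilizers act without pseudo-reflections (both tangent characters are faithful for an unmixed action that is faithful on each factor), so $\pi$ is \'etale over $U$. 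In (4), you are right that the edge map $H^q(X,\sigma_*\Omega^p_S)\to H^q(S,\Omega^p_S)$ is \emph{not} automatically injective (e.g.\ $R^1\sigma_*\Omega^1_S\neq 0$ is supported on the exceptional locus), so some Hodge-theoretic input such as strictness is genuinely needed, and your "injective and strict on the abutment implies injective on $\mathrm{Gr}_F$" step is correct.

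Two caveats. First, part (2) --- the extension of $p$-forms across the exceptional divisor of the resolution of a cyclic quotient singularity --- is the actual mathematical content of the theorem, and you correctly identify it as such but do not carry it out; you either invoke Steenbrink (which reduces your proof to a citation, exactly as the paper does) or promise a Hirzebruch--Jung coordinate computation that is not performed. Note in particular that this extension statement is \emph{not} a consequence of the singularities being canonical (they are only log terminal; e.g.\ $\frac13(1,1)$ has discrepancy $-\frac13$), so the local verification is not entirely formal, although it does go through because discrepancies lie strictly in $(-1,0]$ and divisors of sections are integral. Second, your strictness argument in (4) presupposes that the filtration on $H^{p+q}(X,\CC)$ defined by the spectral sequence of $\tilde{\Omega}^\bullet_X$ coincides with Deligne's Hodge filtration and that $\sigma^*$ is a morphism of pure Hodge structures; this is itself part of what Steenbrink establishes in the same section, so if the goal were a proof independent of the cited source you would need to supply that identification (for instance via $H^k(X,\CC)\cong H^k(C_1\times C_2,\CC)^G$ and functoriality of mixed Hodge structures for $\pi$ and $\sigma$), on pain of circularity.
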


\begin{prop}\label{hodge}\ 
 If $p_g(S) = 0$, then $H^0(C_1 \times C_2, \Omega^2_{C_1 \times C_2})^G = 0$. In particular, $H^2(X, \CC) \cong H^1(C_1 \times C_2, \Omega^1_{C_1 \times C_2})^G$.
\end{prop}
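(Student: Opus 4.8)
The plan is to show that $p_g(S)$ equals the dimension of $H^0(C_1\times C_2,\Omega^2_{C_1\times C_2})^G$, so that the hypothesis $p_g(S)=0$ translates directly into the vanishing of the invariant space of holomorphic two-forms; the second assertion is then immediate from the Hodge-theoretic decomposition of $H^2(X,\CC)$ already obtained in the proof of Proposition \ref{parity}.

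For the first part, I would start from the definition $p_g(S)=h^2(\hol_S)$ and apply Serre duality on the smooth projective surface $S$ to rewrite it as $h^0(S,\Omega^2_S)$. Since global sections are unaffected by a proper pushforward, $H^0(S,\Omega^2_S)\cong H^0(X,\sigma_*\Omega^2_S)$, and Theorem \ref{steenbrink}(2) identifies $\sigma_*\Omega^2_S$ with $\tilde{\Omega}^2_X$. Next, Theorem \ref{steenbrink}(3) gives $\tilde{\Omega}^2_X=(\pi_*\Omega^2_{C_1\times C_2})^G$, where $\pi\colon C_1\times C_2\to X$ denotes the quotient map. Finally, because $G$ is finite and we work over $\CC$, the functor of taking $G$-invariants is exact and commutes with taking global sections, so
$$
H^0\!\left(X,(\pi_*\Omega^2_{C_1\times C_2})^G\right)\cong H^0\!\left(X,\pi_*\Omega^2_{C_1\times C_2}\right)^G\cong H^0(C_1\times C_2,\Omega^2_{C_1\times C_2})^G.
$$
Chaining these isomorphisms yields $p_g(S)=\dim H^0(C_1\times C_2,\Omega^2_{C_1\times C_2})^G$, and the hypothesis $p_g(S)=0$ forces this invariant space to vanish.

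For the ``in particular'' clause, I would invoke the decomposition established inside the proof of Proposition \ref{parity}, namely $H^2(X,\CC)\cong H^0(\Omega^2_{C_1\times C_2})^G\oplus H^1(\Omega^1_{C_1\times C_2})^G\oplus (H^0(\Omega^2_{C_1\times C_2})^G)^*$. Once the two outer summands are killed by the vanishing just proved, only the middle summand survives, giving $H^2(X,\CC)\cong H^1(C_1\times C_2,\Omega^1_{C_1\times C_2})^G$.

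The main point requiring care is really the first chain of identifications: everything hinges on Steenbrink's theorem, which supplies the two crucial sheaf identities $\sigma_*\Omega^2_S=\tilde{\Omega}^2_X=(\pi_*\Omega^2_{C_1\times C_2})^G$. Granting these, the only genuinely nonformal input is the exactness of the $G$-invariants functor in characteristic zero, which lets invariants commute with cohomology; the remaining manipulations are then purely formal.
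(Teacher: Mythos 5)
Your argument is correct and takes essentially the same route as the paper: both reduce the claim to Steenbrink's Theorem \ref{steenbrink}, combining the identification $\tilde{\Omega}^2_X=(\pi_*\Omega^2_{C_1\times C_2})^G$ from part (3) with the vanishing of $H^0(S,\Omega^2_S)=p_g(S)$. The only cosmetic difference is that you use part (2) ($\tilde{\Omega}^2_X=\sigma_*\Omega^2_S$) to obtain the equality $p_g(S)=\dim H^0(\Omega^2_{C_1\times C_2})^G$, whereas the paper invokes the injectivity of $H^0(X,\tilde{\Omega}^2_X)\rightarrow H^0(S,\Omega^2_S)$ on the $E_1$-level from part (4); both give the vanishing immediately.
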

\begin{proof}
By Theorem \ref{steenbrink}, $H^0(X,\tilde{\Omega}^2_X) \rightarrow H^0(S,\Omega^2_S) = 0$ is 
injective, and $H^0(X,\tilde{\Omega}^2_X) = H^0(C_1 \times C_2, \Omega_{C_1 \times C_2}^2)^G$.
\end{proof}

We recall the following version of Schur's lemma (cf. {\it e.g.} \cite[Proposition 4]{langrep}):

\begin{lemma} \label{schur}
 Let $G$ be a finite group and let $W$ be an irreducible $G$-representation. Then
\begin{enumerate}
 \item $\dim (W \otimes W^*)^G = 1$;
\item if $W'$ is an irreducible $G$-representation not isomorphic to $W^*$, then $\dim (W \otimes W')^G = 0$.
\end{enumerate}
\end{lemma}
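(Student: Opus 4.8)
The plan is to reduce both statements to the classical form of Schur's lemma --- that is, to statements about spaces of $G$-equivariant homomorphisms --- by means of the natural dictionary identifying invariant tensors with equivariant maps. Concretely, for any two finite-dimensional $G$-representations $W,W'$ there is a canonical isomorphism of $G$-representations $W \otimes W' \cong \mathrm{Hom}(W^*,W')$, sending a decomposable tensor $w \otimes w'$ to the map $\xi \mapsto \xi(w)\,w'$ (using $(W^*)^* \cong W$ via evaluation), where $G$ acts on $\mathrm{Hom}(W^*,W')$ by $(g\cdot f)(\xi) = g\bigl(f(g^{-1}\xi)\bigr)$. Passing to $G$-invariants, a tensor is $G$-invariant precisely when the corresponding homomorphism is $G$-equivariant, so that
\[
(W \otimes W')^G \cong \mathrm{Hom}_G(W^*,W').
\]
It then suffices to compute the right-hand side in the two relevant cases.

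For part (2) I would use that the dual $W^*$ of an irreducible representation is again irreducible, so $\mathrm{Hom}_G(W^*,W')$ is a space of equivariant maps between two irreducibles. Since the kernel and image of such a map are subrepresentations, any nonzero equivariant map between irreducibles is an isomorphism; hence if $W'$ is not isomorphic to $W^*$ the only equivariant map is $0$, giving $\dim (W \otimes W')^G = 0$. For part (1) I take $W' = W^*$ in the same identification, obtaining $(W \otimes W^*)^G \cong \mathrm{End}_G(W^*)$. Here I invoke the fact that, over the algebraically closed field $\CC$, every $G$-equivariant endomorphism of an irreducible representation is a scalar multiple of the identity: picking an eigenvalue $\lambda$ (which exists since $\CC$ is algebraically closed), the map $f - \lambda\,\mathrm{id}$ is equivariant with nonzero kernel, hence zero by irreducibility. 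Thus $\mathrm{End}_G(W^*) = \CC\cdot\mathrm{id}$ is one-dimensional, and $\dim (W \otimes W^*)^G = 1$.

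The argument is essentially routine, and the only points requiring care are bookkeeping rather than genuine difficulty. The first is keeping straight which factor carries the dual in the isomorphism $W \otimes W' \cong \mathrm{Hom}(W^*,W')$, so that the hypotheses land correctly: it is $W^* \cong W'$, not $W \cong W'$, that produces a nonzero space of invariants, which is exactly the normalization forced on us by part (1) (where $W'=W^*$) and part (2) (where $W'\not\cong W^*$). The second point, which is where algebraic closedness is genuinely used, is the step identifying $\mathrm{End}_G$ of an irreducible with the scalars; this is precisely the content of the classical Schur's lemma cited from \cite{langrep}, so in the write-up I would simply reduce to that reference rather than reprove the eigenvalue argument. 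Once the invariants-versus-equivariant-maps dictionary is in place, no essential obstacle remains.
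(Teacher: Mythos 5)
Your proof is correct. The paper offers no argument at all for this lemma --- it simply recalls it as a known version of Schur's lemma with a citation to \cite[Proposition 4]{langrep} --- and your reduction of the statement to the classical Schur's lemma via the canonical identification $(W\otimes W')^G \cong \mathrm{Hom}_G(W^*,W')$ is exactly the standard derivation that the citation stands in for, with the duality bookkeeping and the use of algebraic closedness of $\CC$ handled correctly.
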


\begin{rem} \
\begin{enumerate}
 \item Proposition \ref{hodge} shows that the singularities of the quotient-model $X$ give no conditions of adjunction for canonical forms, even if the singularities are not canonical. This is not true for the bicanonical divisor.
\item The above results (especially the proof of prop. \ref{parity}) make clear that the condition that $S$ has vanishing geometric genus gives strong restrictions on the $G$-modules $H^0(C_i, \Omega^1_{C_i})$. For example, using Schur's lemma, we can list the following properties:
\begin{enumerate}
 \item if $\chi$ is an irreducible character of $G$, then $H^0(\Omega^1_{C_1})^{\chi} = 0$ or $H^0(\Omega^1_{C_2})^{\bar{\chi}} = 0$;
\item $\dim H^2(X, \CC) > 2$ if and only if there is an irreducible non selfdual  character  $\chi$ of $G$ such that $H^0(\Omega^1_{C_1})^{\chi} \neq 0$ and $H^0(\Omega^1_{C_2})^{\chi} \neq 0$.
\end{enumerate}
Each time that such a situation occurs, it raises the dimension of $\dim H^2(X, \CC)$ by two. 
\end{enumerate}
\end{rem}
An immediate consequence of the above considerations is the following:

\begin{prop}\label{Gselfdual}
Let $S$ be a regular product-quotient surface with $p_g=0$,  quotient model $X = (C_1 \times C_2)/G$, such that all irreducible representations of $G$ are selfdual (e.g. $G=\mathfrak{S}_n$). Then $h^2(X,\CC) = 2$.
\end{prop}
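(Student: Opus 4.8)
The plan is to combine Proposition \ref{hodge} with the decomposition of $H^1(\Omega^1_{C_1\times C_2})^G$ carried out in the proof of Proposition \ref{parity}, and then to kill the ``interesting'' summand using the selfduality hypothesis. First I would invoke the regularity and $p_g=0$ assumptions: since $q(S)=0$ forces $H^0(\Omega^1_{C_i})^G$-type contributions to vanish (more precisely, $H^1(\hol_{C_i})^G$ enters only through the mixed terms), and since $p_g(S)=0$ gives, by Proposition \ref{hodge}, the isomorphism $H^2(X,\CC)\cong H^1(C_1\times C_2,\Omega^1_{C_1\times C_2})^G$. So the whole problem reduces to computing the dimension of the $G$-invariants of the K\"unneth decomposition of $H^1(\Omega^1_{C_1\times C_2})$.

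Next I would recall the four-term splitting displayed in the proof of Proposition \ref{parity}. Two of the four summands combine to give the fixed $\CC^2$ coming from the fundamental classes of the two fibres (the terms $H^1(\Omega^1_{C_i})\otimes H^0(\hol_{C_j})$), contributing exactly $2$ to $h^2(X,\CC)$. The remaining two summands are $\bigl(H^0(\Omega^1_{C_1})\otimes\overline{H^0(\Omega^1_{C_2})}\bigr)^G$ and its conjugate, i.e. $V\oplus\bar V$ in the notation of that proof. Thus it suffices to show $V=0$ under the selfduality hypothesis, since then $h^2(X,\CC)=0+0+2=2$.

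To show $V=0$, I would decompose $H^0(\Omega^1_{C_1})=\bigoplus_\chi H^0(\Omega^1_{C_1})^\chi$ and $H^0(\Omega^1_{C_2})=\bigoplus_\psi H^0(\Omega^1_{C_2})^\psi$ into isotypic components over the irreducible characters of $G$, so that $\overline{H^0(\Omega^1_{C_2})}$ has isotypic decomposition indexed by the conjugates $\bar\psi$. By Lemma \ref{schur}, the $G$-invariants of $W_\chi\otimes W'_{\bar\psi}$ are nonzero only when $W'_{\bar\psi}\cong W_\chi^*$, that is when the irreducible $\bar\psi$ is dual to $\chi$. Here is where selfduality enters: if every irreducible representation of $G$ is selfdual then every character is real, so $\bar\psi=\psi$ and the pairing condition becomes $\psi$ dual to $\chi$, i.e. (again by selfduality) $\psi=\chi$. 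Then the remark preceding the proposition (property (a): for each irreducible $\chi$, either $H^0(\Omega^1_{C_1})^\chi=0$ or $H^0(\Omega^1_{C_2})^{\bar\chi}=0$), which is itself a consequence of $p_g=0$ via Schur, forces each such product of invariants to vanish. Hence $V=0$ and $h^2(X,\CC)=2$.

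I expect the only genuine subtlety to be bookkeeping with duals and complex conjugates: one must be careful that $\overline{H^0(\Omega^1_{C_2})}$ carries the \emph{conjugate} $G$-action, so its isotypic pieces are indexed by $\bar\psi$ rather than $\psi$, and that Lemma \ref{schur} pairs a representation with its \emph{dual} $W^*$ rather than its conjugate $\bar W$. For a general group these are different, but the selfduality hypothesis collapses both dualization and conjugation on irreducibles to the identity on characters, which is exactly what makes the mixed term disappear. The parenthetical example $G=\mathfrak{S}_n$ is then justified by the classical fact that all complex irreducible representations of the symmetric group are defined over $\QQ$, hence selfdual. The rest is immediate from the already-established machinery.
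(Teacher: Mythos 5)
Your argument is correct and follows exactly the route the paper intends: the paper states this proposition as ``an immediate consequence of the above considerations,'' namely Proposition \ref{hodge}, the K\"unneth decomposition in the proof of Proposition \ref{parity}, and properties (a)--(b) of the preceding remark, which is precisely the machinery you assemble. Your careful bookkeeping of duals versus conjugates (collapsing under the selfduality hypothesis so that $p_g=0$ kills $V\oplus\bar V$) is just the explicit version of what the paper leaves implicit.
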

\section{The invariant $\gamma$}

The formulas for $K^2_S$, $\chi$ and $q$ in \cite{bp} translate, in the notation of the present paper, as follows.

\begin{prop}[\cite{bp}, Prop. 1.6 and Cor. 1.7 and \cite{serrano}]\label{forminf}
$$K^2_S=8\chi-2\gamma-l,\ \ \ \ \
\chi=\frac{(g_1-1)(g_2-1)}{|G|}+\frac{\mu-2\gamma}{4},\ \ \ \ \
q=g_1+g_2.$$
\end{prop}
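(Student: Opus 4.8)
The three identities are of different natures, so the plan is to establish them separately, in each case reducing to the local geometry of a single cyclic quotient singularity and then invoking the dictionary (\ref{ekB=lmg}).

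For $q$ the argument is purely Hodge-theoretic and self-contained. Since $S$ is a smooth projective surface, $q(S)=h^{0}(S,\Omega^{1}_{S})$, and Theorem \ref{steenbrink}(2)--(3) identifies $H^{0}(S,\Omega^{1}_{S})=H^{0}(X,\tilde\Omega^{1}_{X})=H^{0}(C_{1}\times C_{2},\Omega^{1}_{C_{1}\times C_{2}})^{G}$. Splitting $\Omega^{1}_{C_{1}\times C_{2}}=p_{1}^{*}\Omega^{1}_{C_{1}}\oplus p_{2}^{*}\Omega^{1}_{C_{2}}$ by the K\"unneth formula and taking $G$-invariants, this becomes $H^{0}(\Omega^{1}_{C_{1}})^{G}\oplus H^{0}(\Omega^{1}_{C_{2}})^{G}=H^{0}(\Omega^{1}_{C_{1}/G})\oplus H^{0}(\Omega^{1}_{C_{2}/G})$, of dimension $g(C_{1}/G)+g(C_{2}/G)$; this is the stated value of $q$ (compare \cite{serrano}).

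For $\chi$ and $K_{S}^{2}$ I would first isolate the two global reductions. Cyclic quotient singularities are rational, so $R^{i}\sigma_{*}\hol_{S}=0$ for $i>0$ and $\sigma_{*}\hol_{S}=\hol_{X}$; hence $\chi(\hol_{S})=\chi(\hol_{X})$. Moreover the diagonal $G$-action on $C_{1}\times C_{2}$ has only isolated fixed points, so $\pi\colon C_{1}\times C_{2}\to X$ is \'etale in codimension one, giving $\pi^{*}K_{X}=K_{C_{1}\times C_{2}}$ and therefore $K_{X}^{2}=K_{C_{1}\times C_{2}}^{2}/|G|=8(g_{1}-1)(g_{2}-1)/|G|$. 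Writing $K_{S}=\sigma^{*}K_{X}+\sum_{x}\sum_{i}a_{x,i}E_{x,i}$ for the discrepancies of the Hirzebruch--Jung strings and using $\sigma^{*}K_{X}\cdot E_{x,i}=0$ yields $K_{S}^{2}=K_{X}^{2}-\sum_{x}k_{x}$, with $k_{x}:=-\bigl(\sum_{i}a_{x,i}E_{x,i}\bigr)^{2}$.

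With these in place the whole content collapses to two purely local identities for a singularity of type $\tfrac{q}{n}$ with $\tfrac{n}{q}=[b_{1},\dots,b_{l}]$: that its contribution to $\chi(\hol_{X})$ (computed by equivariant Riemann--Roch, i.e. by averaging the holomorphic Lefschetz numbers $\chi(\hol_{X})=\tfrac{1}{|G|}\sum_{g}L(g)$, whose $g=1$ term is the main term $(g_{1}-1)(g_{2}-1)$, and reorganising the resulting Dedekind-type sums by singular point) equals $\tfrac14(\mu_{x}-2\gamma_{x})$, and that $k_{x}=6\gamma_{x}+l_{x}-2\mu_{x}$. These continued-fraction and Dedekind-sum computations are the real obstacle, everything before and after being formal; they are exactly what is carried out in \cite{bp}, there encoded through $e,k,B$. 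Granting them and summing over the basket gives $\chi=\frac{(g_{1}-1)(g_{2}-1)}{|G|}+\frac{\mu-2\gamma}{4}$, whence $K_{X}^{2}=8\chi-2\mu+4\gamma$ and $K_{S}^{2}=K_{X}^{2}+2\mu-6\gamma-l=8\chi-2\gamma-l$. In practice I would simply quote \cite{bp}, Prop.~1.6 and Cor.~1.7, in the form $K_{S}^{2}=8\chi-\tfrac{B}{3}$ and $\chi=\frac{(g_{1}-1)(g_{2}-1)}{|G|}+\frac{e-k}{12}$, and substitute (\ref{ekB=lmg}): since $\tfrac{B}{3}=2\gamma+l$ and $\frac{e-k}{12}=\frac{(l+\mu)-(6\gamma+l-2\mu)}{12}=\frac{\mu-2\gamma}{4}$, the two displayed formulas follow by elementary algebra.
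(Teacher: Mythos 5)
Your final paragraph is exactly what the paper does: Proposition \ref{forminf} carries no proof beyond the citation of \cite{bp} (Prop.~1.6, Cor.~1.7) and \cite{serrano} together with the dictionary (\ref{ekB=lmg}), and your substitutions $B/3=2\gamma+l$ and $(e-k)/12=\bigl((l+\mu)-(6\gamma+l-2\mu)\bigr)/12=(\mu-2\gamma)/4$ are the correct translation. Your preceding geometric sketch (Steenbrink/K\"unneth for $q$, rationality of the singularities and the discrepancy computation for $\chi$ and $K^2_S$) is sound and consistent with the paper's later identity $-\mathfrak{A}^2=6\gamma+l-2\mu$; the only caveat is notational, namely that in $q=g_1+g_2$ the paper is (inconsistently with its earlier convention) writing $g_i$ for $g(C_i/G)$, which is indeed what you computed.
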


Observe that the new invariant  $\gamma$ is (as defined in \ref{numbers}) a priori a rational number. 

But, in fact, we are going to show in the
next  proposition  that $\gamma$ is an integer, bounded from below by $-p_g(S)$.
\begin{prop}\label{gamma+pg}
$$
\gamma(X)+p_g(S) \in \NN.
$$
Moreover, if $\gamma(X)+p_g(S)=0$, then $S$ has {\em maximal Picard number.}
\end{prop}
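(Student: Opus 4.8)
The plan is to compute $\dim H^2(X,\CC)$ in two independent ways and compare. On the topological side, Proposition \ref{h2} gives $b_2(S)=\dim H^2(X,\CC)+l$ and $b_1(S)=2q$, while Noether's formula together with the formulas of Proposition \ref{forminf} yields $e(S)=12\chi-K_S^2=4\chi+2\gamma+l$. Writing $e(S)=2-4q+b_2(S)$ and substituting $\chi=1-q+p_g$, one gets $b_2(S)=2+4p_g+2\gamma+l$, hence
$$
\dim H^2(X,\CC)=2+4p_g(S)+2\gamma(X).
$$
On the Hodge-theoretic side, the proof of Proposition \ref{parity} shows $\dim H^2(X,\CC)=2\,h^0(\Omega^2_{C_1\times C_2})^G+h^1(\Omega^1_{C_1\times C_2})^G$, with $h^1(\Omega^1_{C_1\times C_2})^G=2+2\dim V$, where $V=\left(H^0(\Omega^1_{C_1})\otimes\overline{H^0(\Omega^1_{C_2})}\right)^G$. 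Since Theorem \ref{steenbrink} identifies $H^0(S,\Omega^2_S)$ with $H^0(C_1\times C_2,\Omega^2_{C_1\times C_2})^G$, one has $h^0(\Omega^2_{C_1\times C_2})^G=p_g(S)$, so this reads $\dim H^2(X,\CC)=2p_g(S)+2+2\dim V$.

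Comparing the two expressions gives the key identity $\dim V=p_g(S)+\gamma(X)$. As $V$ is a complex vector space, $\dim V\in\NN$, which proves $\gamma(X)+p_g(S)\in\NN$. The substantive content here is precisely the recognition that $p_g+\gamma$ \emph{is} the dimension of a cohomology space, which is what forces its integrality and nonnegativity; everything else is bookkeeping.

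For the second assertion, assume $\gamma(X)+p_g(S)=0$. Then $V=0$ and $h^1(\Omega^1_{C_1\times C_2})^G=2$, whence $h^{1,1}(S)=b_2(S)-2p_g(S)=l+2$. The plan is to exhibit $l+2$ independent algebraic classes: the two fibre classes $F_1,F_2$ of the fibrations $S\to C_i/G$ induced by $\sigma$ and the projections, together with the $l$ irreducible exceptional curves $A_1,\dots,A_l$ of $\sigma$. In the basis $\{F_1,F_2,A_1,\dots,A_l\}$ the intersection matrix is block diagonal: the $A_i$-block is the negative definite Hirzebruch--Jung matrix, the $F$-block is $\left(\begin{smallmatrix}0&d\\ d&0\end{smallmatrix}\right)$ with $d=F_1\cdot F_2>0$, and the off-diagonal blocks vanish because a general fibre avoids the finitely many points lying over $\Sing(X)$. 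This matrix is nondegenerate, so the $l+2$ classes are linearly independent. Being classes of curves they lie in $\mathrm{NS}(S)\otimes\QQ\subseteq H^{1,1}(S)$, whence $l+2\le\rho(S)\le h^{1,1}(S)=l+2$, i.e. $S$ has maximal Picard number.

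I expect the two delicate points to be, first, assembling the opening computation without sign or substitution errors (correctly combining Noether's formula, the Betti-number comparison of Proposition \ref{h2}, and the formulas of Proposition \ref{forminf}), and second, justifying the block-diagonal shape of the intersection form in the last paragraph, namely that $F_i\cdot A_j=0$ for a general fibre and that $F_1\cdot F_2>0$. Neither is conceptually hard, but both must be verified carefully since the whole conclusion rests on the nondegeneracy of that matrix.
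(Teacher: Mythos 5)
Your proof is correct, and for the first assertion it takes a genuinely different route from the paper's. The paper performs essentially the same Noether-formula bookkeeping to arrive at $2(\gamma+p_g)=h^{1,1}(S)-l-2$, but then has to establish separately that this quantity is nonnegative (by exhibiting the $l+2$ independent classes of fibres and exceptional curves --- the very argument you reserve for the second assertion) and that it is even (by invoking the parity statement of Proposition \ref{parity}). You instead open up the \emph{proof} of Proposition \ref{parity} and extract the finer identity $\dim H^2(X,\CC)=2p_g+2+2\dim V$ with $V=\bigl(H^0(\Omega^1_{C_1})\otimes\overline{H^0(\Omega^1_{C_2})}\bigr)^G$, which, compared with the topological count $\dim H^2(X,\CC)=2+4p_g+2\gamma$, gives $p_g+\gamma=\dim V$ outright; integrality and nonnegativity then come for free, and the geometric independence argument is needed only for the maximal-Picard-number claim. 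What this buys is a structural interpretation of $p_g+\gamma$ as the dimension of an explicit space of $G$-invariants --- which is essentially the content of the paper's later duality section, where $p_g+\gamma$ reappears as $p_g(S')$ of the dual surface and the authors themselves remark that this yields an independent proof of the proposition. The two delicate points you flag are exactly the right ones and both check out: the arithmetic gives $b_2(S)=2+4p_g+2\gamma+l$ consistently with the paper's later remark that $h^2(X,\CC)=2(\gamma+2p_g+1)$, and the block-diagonality holds since the $A_j$ lie in finitely many fibres of each fibration while $F_1\cdot F_2>0$ because the two rulings of $C_1/G\times C_2/G$ meet and $X$ maps finitely onto it.
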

\begin{proof}
The intersection form on $H^2(S, \CC)$ shows that the fibres of the two fibrations $S \rightarrow C_i/G$, and the $l$ irreducible exceptional curves of $\sigma$ form a set of $l+2$ linearly independent classes in $H^1(S, \Omega^1_S)$. Therefore we have 
$$
h^{1,1}-l-2\in \NN.
$$
By Proposition \ref{h2},we know that $\dim H^2(S, \CC)=l+\dim H^2(X, \CC)$ and, by Proposition \ref{parity}, we see that $h^{1,1}$ has the same parity as $l$. Therefore $h^{1,1}-l-2\in 2\NN$.

The claim follows, using Noether formula and Hodge theory, since
\begin{align*}
2(\gamma+p_g)=&-K^2_S+8\chi-l+2p_g\\
=&c_2(S)-4\chi-l+2p_g\\
=&2-2b_1+b_2-4+4q-4p_g-l+2p_g\\
=&h^{1,1}-l-2.\\
\end{align*}
In particular, if $\gamma(X)+p_g(S)=0$, then $H^{1,1}(S)$ is generated by algebraic curves 
(the fibres of the two fibrations and the exceptional curves of $\sigma$)
and therefore has maximal Picard number.
\end{proof}
On the other hand the next proposition implies that the possible values of $\gamma$ 
distribute symmetrically around zero.
\begin{prop}\label{gammasym}
$\gamma(\frac{q}{n})=-\gamma(\frac{n-q}n)$.
\end{prop}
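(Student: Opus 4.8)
The plan is to reduce the statement to a single combinatorial identity between the Hirzebruch--Jung continued fractions of $\frac{n}{q}$ and $\frac{n}{n-q}$, and then to prove that identity from the classical duality between the resolution chains of the two singularities.

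First I would unwind Definition~\ref{numbers}. Write $\frac{q}{n}=[b_1,\ldots,b_l]$ and $\frac{n-q}{n}=[a_1,\ldots,a_k]$, set $p:=n-q$, and let $p'$ be the inverse of $p$ modulo $n$ in $\{1,\ldots,n-1\}$. Since $p\equiv -q \pmod n$ we get $p'\equiv -q'$, hence $p'=n-q'$, and therefore
\[
\frac{p+p'}{n}=\frac{(n-q)+(n-q')}{n}=2-\frac{q+q'}{n}.
\]
Substituting into the definition of $\gamma$, the two fractional contributions cancel up to the constant $2$, so
\[
6\bigl(\gamma(\tfrac{q}{n})+\gamma(\tfrac{n-q}{n})\bigr)
=2+\sum_{i=1}^{l}(b_i-3)+\sum_{j=1}^{k}(a_j-3)
=2+\sum_{i=1}^{l}b_i+\sum_{j=1}^{k}a_j-3(l+k).
\]
Thus the whole proposition becomes equivalent to the identity
\[
\sum_{i=1}^{l}b_i+\sum_{j=1}^{k}a_j=3(l+k)-2.
\]

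The core step is this last identity, which I would obtain from Riemenschneider's duality between the chains resolving $\frac1n(1,q)$ and $\frac1n(1,n-q)$. Concretely, one records the two tuples $(b_i)$ and $(a_j)$ in a planar ``staircase'': in row $i$ one places $b_i-1$ dots, each row beginning in the column occupied by the last dot of the previous row, so that consecutive rows overlap in exactly one column; reading the same diagram by columns returns $a_j-1$ dots in column $j$. Counting the total number $N$ of dots by rows and by columns gives $\sum_i(b_i-1)=\sum_j(a_j-1)=N$, while the $l-1$ single-column overlaps yield $k=N-(l-1)$ and, by symmetry, $l=N-(k-1)$, so $k+l=N+1$. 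Feeding $N=\sum_i b_i-l=k+l-1$ back in gives $\sum_i b_i=2l+k-1$ and $\sum_j a_j=2k+l-1$, and adding these produces exactly $3(l+k)-2$.

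The only genuinely delicate point is the duality input: one must make sure the correct partner of $\frac{q}{n}$ is $\frac{n-q}{n}$ (and \emph{not} the reversed fraction $\frac{q'}{n}=[b_l,\ldots,b_1]$ recorded just above the proposition), and that the overlap bookkeeping in the staircase is exact. As a sanity check, the whole identity is equivalent to the oddness of the Dedekind sum: the formula for $\gamma$ is exactly $\gamma(\tfrac{q}{n})=2\,s(q,n)$, so the proposition is nothing but $s(n-q,n)=-s(q,n)$, which also offers an alternative, reciprocity-based proof should the staircase bookkeeping be deemed too ad hoc.
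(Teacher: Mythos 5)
Your proof is correct and follows essentially the same route as the paper's: both use $(n-q)'=n-q'$ to reduce the claim to the identity $\sum(b_i-1)=\sum(a_j-1)=k+l-1$, which the paper simply cites as Lemma~4 of Riemenschneider and you re-derive from the staircase (dot-diagram) duality. Your closing observation that $\gamma(\tfrac{q}{n})=2s(q,n)$, so the proposition is just the oddness of the Dedekind sum, is a nice independent check but not needed.
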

\begin{proof}
Write $\frac{n}q=[b_1,\ldots,b_l]$, $\frac{n}{n-q}=[a_1,\ldots,a_k]$. Then by 
\cite[Lemma 4]{Riemenschneider} 
$$\sum_1^k (a_i-1)=\sum_1^l (b_i-1)=k+l-1.$$
Therefore
\begin{multline*}
6\left(\gamma \left( \frac{q}{n} \right)+\gamma \left( \frac{n-q}n \right) \right)=
\frac{q+q'}n +\frac{n-q+n-q'}n + \sum_{i=1}^l (b_i-3)+ \sum_{i=1}^k (a_i-3)= \\
=2 + \sum_{i=1}^l (b_i-1) -2l + \sum_{i=1}^k (a_i-1) -2k=0.
\end{multline*}
\end{proof}

Having proven a lower bound for $\gamma$ in terms of the invariants of $S$, 
it is then reasonable to imagine that some upper bound should hold too. In other words 
\begin{conj}\label{boundc}
 There is an explicit function $\Gamma = \Gamma(p_g,q)$ such that, for the quotient model $X$ of every 
product-quotient surface $S$ of general type
$$
\gamma(X) \leq \Gamma(p_g(S),q(S)).
$$
\end{conj}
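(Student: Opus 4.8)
The plan is to convert the sought upper bound into a statement about the two characters of $G$ on holomorphic $1$-forms, and then to control these characters at the branch points. First I would run through the proof of Proposition \ref{gamma+pg}, isolating the identity $2(\gamma+p_g)=h^{1,1}(S)-l-2$, and combine it with $\dim H^2(S,\CC)=\dim H^2(X,\CC)+l$ (Proposition \ref{h2}) and the Künneth description of $H^1(\Omega^1_{C_1\times C_2})^G$ from the proof of Proposition \ref{parity}. Writing $\psi_i$ for the character of the $G$-representation $H^0(\Omega^1_{C_i})$ and $\langle\alpha,\beta\rangle:=\frac1{|G|}\sum_{g\in G}\alpha(g)\overline{\beta(g)}$, this gives
\[
\gamma+p_g=\dim\big(H^0(\Omega^1_{C_1})\otimes\overline{H^0(\Omega^1_{C_2})}\big)^G=\langle\psi_1,\psi_2\rangle,
\qquad
p_g=\dim\big(H^0(\Omega^1_{C_1})\otimes H^0(\Omega^1_{C_2})\big)^G,
\]
the latter being $\frac1{|G|}\sum_{g}\psi_1(g)\psi_2(g)$. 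Subtracting and taking real parts yields the compact formula
\[
\gamma=\frac{2}{|G|}\sum_{g\in G}\Im\psi_1(g)\,\Im\psi_2(g),
\]
so bounding $\gamma$ is now a question about the imaginary parts of $\psi_1$ and $\psi_2$.

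The naive estimate $\gamma\le\langle\psi_1,\psi_2\rangle\le g(C_1)g(C_2)$ is worthless, since for fixed $(p_g,q)$ the genera $g(C_i)$ grow with $|G|$. Instead I would insert the Eichler trace formula: for $g\neq1$ the number $\psi_i(g)$ is an explicit sum of roots-of-unity terms over the points of $C_i$ fixed by $g$, so $\Im\psi_i(g)$ is supported on stabiliser elements at branch points and is a sum of cotangent contributions attached to the local rotation numbers. A singular point of $X$ of type $\frac{q}{n}$ is precisely a pair of such fixed points whose stabilisers match with rotation data $(1,q)$ modulo $n$; hence the double sum reorganises as a sum over the basket $\mathfrak B$, recovering the definition $\gamma(X)=\sum_{x}\gamma_x$ via the continued-fraction/Dedekind-reciprocity identity. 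The problem thus becomes the control of the basket for surfaces of general type with $\chi(S)=1-q+p_g$ fixed.

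To exploit the general type hypothesis I would use Proposition \ref{forminf}. From $K^2_S=8\chi-2\gamma-l$ and the minimal model $\overline{S}$, writing $K^2_{\overline{S}}=K^2_S+\nu$ with $\nu\ge0$ the number of contracted $(-1)$-curves, the inequality $K^2_{\overline{S}}\ge1$ gives
\[
2\gamma\le 8\chi-l+\nu-1,\qquad \chi=1-q+p_g .
\]
As $l\ge0$, an explicit $\Gamma(p_g,q)$ follows the moment $\nu$ is bounded in terms of $(p_g,q)$. Now the $(-1)$-curves of a product-quotient surface are strict transforms of very special curves on $X$ meeting the exceptional configuration in a prescribed pattern (the non-minimality analysis of \cite{bp}), so $\nu$ is bounded by a function of the basket; together with the localisation of the previous paragraph this would close the circle.

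The main obstacle is that nothing in $(p_g,q)$ a priori bounds $|G|$, the curve genera, the number of branch points, or the multiplicities of the singularities of $X$: the proposition quoted above shows that already $\gamma=2$ (resp. $\gamma=3$) with $p_g=0$ forces a single singular point of multiplicity $\ge45$ (resp. $\ge14$). The genuine content is therefore to prove that a singular point of sufficiently large multiplicity forces either $p_g>0$ or the failure of the general type condition, and this is exactly what is not available unconditionally. I therefore expect the scheme above to establish Conjecture \ref{boundc} only under a hypothesis that makes the basket effective — most naturally a bound on the orders $m_i,n_j$ of the branch points, equivalently on the multiplicities of the singularities of $X$, or the assumption that $G$ be abelian, where the Eichler sums are completely explicit. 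Under such a hypothesis the three steps combine to give an explicit $\Gamma(p_g,q)$, which is the conditional statement I would prove in Section \ref{evidences}.
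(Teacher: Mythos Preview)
The statement is a conjecture, and the paper does not prove it unconditionally; Section~\ref{evidences} only establishes bounds under extra hypotheses. Your proposal correctly recognises this and aims at a conditional statement as well, so the relevant comparison is between your route to such a statement and the paper's.

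Your character-theoretic identity $\gamma=\tfrac{2}{|G|}\sum_{g}\Im\psi_1(g)\,\Im\psi_2(g)$ is correct and does not appear in the paper in this form; the Eichler-trace localisation of $\Im\psi_i$ to the branch data is a natural continuation. You and the paper then share the pivot $2\gamma\le 8\chi-l+\nu-1$ (from $P^2\ge1$ and $K_S^2=8\chi-2\gamma-l$), reducing everything to controlling $\nu=-N^2$. From this point the two arguments diverge.

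The paper's device in Section~\ref{evidences} is \emph{not} to bound $\nu$ by a function of $(p_g,q)$ or of the basket. Instead it bounds $\nu$ by a linear expression in $\gamma$ with coefficient strictly less than $2$: from $K_S\A=6\gamma+l-2\mu$ and $K_S\A\ge N\A\ge\nu(1+\delta)$, where $\delta$ is any lower bound for $E\cdot\sigma^*K_X$ over $(-1)$-curves $E$, one obtains $\nu\le\frac{6\gamma+l-2\mu}{1+\delta}$. Feeding this back into $1\le K_S^2+\nu$ produces an inequality that can be solved for $\gamma$ precisely when $\delta\ge5$. The extra hypotheses the paper imposes---$\Theta_1,\Theta_2\ge\tfrac52$, or $E(F_1+F_2)\ge85$ for every $(-1)$-curve---are chosen exactly to force $E\sigma^*K_X\ge5$.

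Your proposed extra hypotheses (a uniform bound on the $m_i,n_j$, or $G$ abelian) do not plug into either mechanism. A cap on the branch orders bounds the multiplicity of each singular point but not the cardinality of $\mathfrak B$, and the only available cardinality bound, $\#\mathfrak B\le 8\chi+4\gamma-1$, again involves $\gamma$; the ``close the circle'' step is therefore genuinely circular as written. Likewise, $G$ abelian renders the Eichler sums explicit but gives no a priori bound on $\gamma$ (beyond the trivial case where every character is real, forcing $\gamma=0$). So while your reformulation is attractive and the shared inequality is correct, the conditional argument you sketch does not go through under the hypotheses you name; the leverage in the paper comes from the intersection-theoretic lower bound on $E\sigma^*K_X$, which is orthogonal to your representation-theoretic constraints.
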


\begin{rem}
From Proposition \ref{h2} and the proof of Proposition \ref{gamma+pg}, $h^2(X,\CC)=2(\gamma+2p_g+1)$. In particular,  by Proposition \ref{Gselfdual},
if $S$ is a regular product-quotient surface with $p_g=0$,  quotient model $X = (C_1 \times C_2)/G$, such that all irreducible representations of $G$ are selfdual, then $\gamma = 0$.
\end{rem}
\section{A classification algorithm for surfaces of general type with given $p_g$, $q$ and $\gamma$}
In \cite{bp} we gave an algorithm producing all product-quotient surfaces with given (as input) 
values of $K_S^2$, and $\chi(\hol_S)$.
 
In the following we shall show that we can substitute $\gamma$ to $K^2_S$; 
in other words, fixing $\chi$ and $\gamma \in \NN$, we also get a finite problem. In particular, 
answering in the affirmative Conjecture \ref{boundc} we would have an algorithm constructing all
product-quotient surfaces with fixed values of $q$ and $p_g$.

To ease the forthcoming formulas, we also introduce the following:
\begin{defin}\label{xi}
$\xi:=\xi(X):=4\chi+2\gamma-\mu\in \QQ$.
\end{defin}
\begin{rem}\label{xipq}
Observe  that $\xi$ only depends on $\chi$ and ${\mathfrak B}$. Moreover,
$$
\xi(X)=\frac{4(g_1-1)(g_2-1)}{|G|}=\frac{K^2_X}2.
$$ 
\end{rem}

We recall a theorem by Xiao Gang:

\begin{theo}[\cite{xiao}]
Let $T$ be a minimal surface of general type and $G$ a finite group of automorphisms of $T$, such that $T/G$ is of general type. Let $Y$ be the minimal model of a resolution of singularities of  $T/G$. Then
 $$
1 \leq K_Y^2 \leq \frac{K^2_T}{|G|}.
$$
\end{theo}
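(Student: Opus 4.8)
The plan is to prove the two inequalities separately: the lower bound is essentially a triviality, while the upper bound is the real content and will be obtained by comparing the growth rates of pluricanonical systems on $Y$ and $G$-invariant pluricanonical systems on $T$.

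For the lower bound, the point is that the hypothesis that $T/G$ is of general type guarantees that $Y$, being the minimal model of a resolution of $T/G$, is a \emph{minimal} surface of general type. Hence $K_Y$ is nef and big, so $K_Y^2>0$; since $K_Y^2$ is an integer this forces $K_Y^2\geq 1$. No use of $T$ or $G$ is needed here.

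For the upper bound my strategy is to produce, for every $m$, an injection
\[
H^0(Y,mK_Y)=H^0(S,mK_S)\hookrightarrow H^0(T,mK_T)^G,
\]
where $\sigma\colon S\to X:=T/G$ is a resolution and $\pi\colon T\to X$ is the quotient map. The first equality is birational invariance of plurigenera for the birational morphism $S\to Y$. To build the injection I would restrict a section of $mK_S$ to the smooth locus $S^{\circ}\cong X^{\circ}$, obtaining a section of $mK_{X^\circ}$, and then pull it back by $\pi$. Along the codimension-one ramification of $\pi$ the pullback stays holomorphic: in local coordinates where $\pi$ is $z=w^e$ one has $(dz)^{\otimes m}=(\text{unit})\,w^{m(e-1)}(dw)^{\otimes m}$, which \emph{vanishes} rather than acquiring a pole. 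This yields a $G$-invariant section of $mK_T$ over $T\setminus\pi^{-1}(\Sing X)$, and since $\pi^{-1}(\Sing X)$ is finite (quotient singularities of a surface are isolated) it extends across these points by Hartogs. Injectivity is clear because $\pi$ is dominant and $S^\circ$ is dense.

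It then remains to compare leading coefficients. By Riemann--Roch and vanishing, $h^0(Y,mK_Y)\sim \tfrac{K_Y^2}{2}m^2$. For the invariant plurigenera I would use the averaging identity $\dim H^0(mK_T)^G=\tfrac1{|G|}\sum_{g\in G}\operatorname{tr}\!\big(g^*\mid H^0(mK_T)\big)$: the term $g=1$ contributes $\tfrac1{|G|}h^0(mK_T)\sim \tfrac1{|G|}\tfrac{K_T^2}{2}m^2$, while for $g\neq 1$ the holomorphic Lefschetz fixed point formula expresses the trace as local contributions on the fixed locus of $g$, which has dimension $\le 1$ and therefore contributes only $O(m)$. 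Hence $\dim H^0(mK_T)^G\sim \tfrac1{|G|}\tfrac{K_T^2}{2}m^2$. Combining this with the injection gives $h^0(Y,mK_Y)\le \dim H^0(mK_T)^G$ for all $m$; dividing by $m^2$ and letting $m\to\infty$ yields $K_Y^2\le K_T^2/|G|$. I expect the genuinely delicate step to be precisely this asymptotic count of the invariant plurigenera --- controlling $\operatorname{tr}(g^*\mid H^0(mK_T))$ for $g\neq1$ and verifying that the fixed-point contributions are $o(m^2)$ --- whereas the pullback-and-extend construction, though it needs care at the ramification divisors and at $\pi^{-1}(\Sing X)$, should be routine.
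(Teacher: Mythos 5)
The paper does not prove this statement: it is quoted from Xiao's article \cite{xiao} and used as a black box, so there is no internal argument to measure yours against. Your proof is, as far as I can check, correct and is essentially the standard argument for results of this type. The lower bound is indeed trivial. For the upper bound, the injection $H^0(S,mK_S)\hookrightarrow H^0(T,mK_T)^G$ is sound: the local computation along the ramification divisor shows the pullback of a pluricanonical form stays holomorphic, the extension across the finite set $\pi^{-1}(\Sing X)$ is Hartogs on the smooth surface $T$, and $G$-invariance follows from uniqueness of that extension. Two points you should make explicit. First, to read off $\operatorname{tr}\bigl(g^*\mid H^0(mK_T)\bigr)$ from the holomorphic Lefschetz number you need $H^i(T,mK_T)=0$ for $i>0$, which holds for $m\geq 2$ because $T$ is minimal of general type ($H^2(mK_T)\cong H^0((1-m)K_T)^*=0$ and $H^1(mK_T)=0$ by Mumford's vanishing theorem); the same vanishing is what makes $h^0(Y,mK_Y)=\tfrac{m(m-1)}{2}K_Y^2+\chi(\hol_Y)$ exact for $m\geq 2$. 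Second, the fixed locus of $g\neq 1$ is a disjoint union of isolated points and smooth curves $C$, and the curve contributions to the Lefschetz number grow like $\deg(mK_T|_C)=m\,K_T\cdot C$, which is where your $O(m)$ comes from; this should be said rather than asserted. A variant that avoids Lefschetz altogether is to observe that $H^0(mK_T)^G$ is the space of sections of the orbifold pluricanonical sheaf associated to $K_X+\sum_i(1-\tfrac{1}{e_i})B_i$ on $X$ (where the $B_i$ are the branch divisors with ramification indices $e_i$), whose volume equals $\tfrac{1}{|G|}K_T^2$ by the Hurwitz-type formula $K_T=\pi^*\bigl(K_X+\sum_i(1-\tfrac{1}{e_i})B_i\bigr)$; this is likely closer to Xiao's original route. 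Either way, your conclusion stands.
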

Using remark \ref{xipq} we immediately get the following lower bound for $\xi$.

\begin{cor}\label{4chi+gamma-mu>=1/2}
$$
\xi(X) \geq \frac12 K_{\bar{S}}^2 \geq \frac 12.
$$
\end{cor}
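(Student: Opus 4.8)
The plan is to apply Xiao Gang's theorem directly, taking $T := C_1 \times C_2$ with the given diagonal action of $G$, and recognizing $\bar{S}$ as the minimal model $Y$ of a resolution of $T/G = X$ (indeed $S$ is the minimal resolution of $X$ and $\bar S$ its minimal model, so $\bar S$ is a minimal model of a resolution of $T/G$). First I would verify the hypotheses of the theorem: since $g_1,g_2 \geq 2$, each factor has ample canonical bundle, so $T = C_1 \times C_2$ is a minimal surface of general type; and under the standing assumption that $S$ is of general type, $X = T/G$ is of general type as well, so the theorem applies verbatim.

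Next I would compute $K^2_T$. Writing $K_T = \mathrm{pr}_1^* K_{C_1} + \mathrm{pr}_2^* K_{C_2}$ and using $\deg K_{C_i} = 2g_i-2$ together with $(\mathrm{pr}_1^* K_{C_1})^2 = (\mathrm{pr}_2^* K_{C_2})^2 = 0$ and $\mathrm{pr}_1^* K_{C_1} \cdot \mathrm{pr}_2^* K_{C_2} = (2g_1-2)(2g_2-2)$, one obtains
$$
K^2_T = 2(2g_1-2)(2g_2-2) = 8(g_1-1)(g_2-1).
$$
Xiao's theorem then gives
$$
1 \leq K_{\bar{S}}^2 \leq \frac{K^2_T}{|G|} = \frac{8(g_1-1)(g_2-1)}{|G|}.
$$

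Finally I would invoke Remark \ref{xipq}, which asserts $\xi(X) = \frac{4(g_1-1)(g_2-1)}{|G|}$; hence $K^2_T/|G| = 2\,\xi(X)$. Substituting into the chain above and dividing by $2$ yields
$$
\xi(X) \geq \tfrac12 K_{\bar{S}}^2 \geq \tfrac12,
$$
as claimed.

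The argument is essentially just bookkeeping, as the word ``immediately'' in the statement suggests: there is no genuine obstacle beyond tracking the factor of $2$ relating $K^2_T/|G|$ to $\xi(X)$, and the only point requiring care is the correct identification of the data $(T, G, Y) = (C_1\times C_2,\, G,\, \bar{S})$ so that Xiao's inequality can be quoted directly. The input $\xi(X)=K_X^2/2$ from Remark \ref{xipq} is what converts the abstract upper bound into the desired inequality for $\xi$.
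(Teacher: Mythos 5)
Your proof is correct and is essentially the same as the paper's: the paper also applies Xiao's theorem with $T=C_1\times C_2$ and $Y=\bar S$, justifying the corollary by the single identity $K^2_{C_1\times C_2}/|G|=K^2_X=2\xi$ from Remark \ref{xipq}. You have merely spelled out the hypothesis-checking and the computation $K^2_{C_1\times C_2}=8(g_1-1)(g_2-1)$ that the paper leaves implicit.
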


\begin{proof}
This follows immediately, since $\frac{K^2_{C_1\times C_2}}{|G|}=K^2_{X}=2\xi$.
\end{proof}

We consider the two natural fibrations 
$$
f_1 \colon S \rightarrow C_1/G, \ \ f_2 \colon S \rightarrow C_2/G,
$$ and denote the generic fibre of $f_i$ by $F_i$. Observe that $F_1$ is isomorphic to $C_2$ and $F_2$ is isomorphic to $C_1$. 

These fibrations have been studied in detail in \cite{PolizziNumProp}.
If the type of $X$ is $((g_1;m_1,\ldots,m_r),(g_2;n_1,\ldots,n_s))$, then
 $f_1$ has exactly $r$ reducible fibers, all non reduced, of the form:
$$
F_1 \equiv m_iF_1^{(i)}+\sum a_j A_j, \ \ 1\leq i \leq r,
$$
where the
$A_j$'s are contracted by $\sigma$. Similarly the second fibration
 $f_2 \colon S \rightarrow C_2/G$ with general fibre $F_2$ 
isomorphic to $C_1$, has $s$ reducible fibers of the form $n_iF_2^{(i)}+\sum b_j A_j$.

\begin{rem}[cf. \cite{serrano}, Theorem 2.1]\label{ndividesmi}
Each singular point $x$ of $X$ lies on $\sigma(F_1^{(i)})$ for one $i$.
Moreover, if $x$ is of type $\frac{q}{n}$, then $n$ divides $m_i$.
\end{rem}

We will need the following result by F.  Polizzi, 
computing the self intersection $(F_1^{(i)})^2$ from the types of the singularities of $X$ along $\sigma(F_1^{(i)})$.
\begin{prop}[\cite{PolizziNumProp}, Proposition 2.8]\label{PolizziInt}
$$\sum_{x \in Sing X \cap \sigma(F_1^{(i)})} \frac{q}{n}(x)=-(F_1^{(i)})^2\in \NN,$$ 
where $x$ is a singular point of type $\frac{q}{n}(x)$. 

Moreover, if $x \in \sigma(F_1^{(i)}) \cap \sigma(F_2^{(j)})$ and the contribution of $x$ to 
$(F_1^{(i)})^2$ is $\frac{q}{n}$, then its contribution to $(F_2^{(j)})^2$ is $\frac{q'}{n}$.
\end{prop}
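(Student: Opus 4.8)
The plan is to reduce the statement to a purely local computation at each cyclic quotient singularity on $\sigma(F_1^{(i)})$, with the only global input being two vanishing intersection numbers. Write $f_1 = \bar f_1 \circ \sigma$, where $\bar f_1 \colon X \to C_1/G$ is induced by the first projection $C_1 \times C_2 \to C_1$, so that the reducible fibre $F_1$ is the total transform $\sigma^*\bar F_1$ of a fibre $\bar F_1 = \bar f_1^*(\mathrm{pt})$. Since $\bar F_1$ is Cartier (pulled back from a smooth curve), the projection formula gives $F_1 \cdot A_k = \bar F_1 \cdot \sigma_* A_k = 0$ for every exceptional curve $A_k$ of $\sigma$, and likewise $F_1 \cdot F_1^{(i)} = \bar F_1 \cdot \sigma(F_1^{(i)}) = 0$, because $\sigma(F_1^{(i)})$ is contracted by $\bar f_1$. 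First I would record these two facts; they replace any intersection theory on the singular surface $X$.

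Next, fixing the reducible fibre $F_1 = m_i F_1^{(i)} + \sum_j a_j A_j$, the relations $F_1 \cdot A_k = 0$ read $m_i(F_1^{(i)}\cdot A_k) + \sum_j a_j(A_j\cdot A_k) = 0$. The intersection matrix $M := (A_j\cdot A_k)$ is block diagonal, one negative-definite Hirzebruch--Jung block per point of $\Sing X$ on $\sigma(F_1^{(i)})$ (such points exist and control the fibre by Remark \ref{ndividesmi}), hence invertible; this determines the $a_j$. Substituting into $F_1\cdot F_1^{(i)}=0$ gives $(F_1^{(i)})^2 = -\tfrac{1}{m_i}\sum_j a_j(A_j\cdot F_1^{(i)})$. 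Everything now localises: over a singularity $x$ of type $\tfrac{q}{n}$ the curves $A_j$ form a chain $E_1,\dots,E_l$ with $E_t^2 = -b_t$ and $\tfrac{n}{q} = [b_1,\dots,b_l]$, and $F_1^{(i)}$, the strict transform of $\sigma(F_1^{(i)})$, meets this chain transversally in a single point lying on one end, say $E_1$. Setting $c_t := a_t/m_i$, the equations above localise to $M(c_t)_t = -(F_1^{(i)}\cdot A_t)_t = -(\delta_{t1})_t$, so the local contribution to $-(F_1^{(i)})^2$ is exactly $c_1 = -(M^{-1})_{11}$.

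The core computation is then $-(M^{-1})_{11} = \det M'/\det M$, where $M'$ is the block for the tail $[b_2,\dots,b_l]$. Using the leading-minor recursion $\det M_{1..t} = -b_t\det M_{1..t-1} - \det M_{1..t-2}$ one finds $\det M = (-1)^l n$ and $\det M' = (-1)^{l-1} q$, the latter because $\tfrac{n}{q} = b_1 - \tfrac{1}{[b_2,\dots,b_l]}$ forces the numerator of the tail to equal $q$. Hence $c_1 = q/n$, and summing over $x \in \Sing X \cap \sigma(F_1^{(i)})$ yields $-(F_1^{(i)})^2 = \sum_x \tfrac{q}{n}(x)$. Integrality ($\in \NN$) is then automatic: $(F_1^{(i)})^2 \in \ZZ$ since $S$ is smooth, and each summand is positive. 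For the second assertion, the other branch through $x$ is $\sigma(F_2^{(j)})$, meeting the chain at the opposite end $E_l$; the identical argument gives its contribution $c_l = -(M^{-1})_{ll} = q'/n$, where I would invoke the reversal $\tfrac{q'}{n} = [b_l,\dots,b_1]$ recalled after Definition \ref{numbers} to evaluate the corresponding minor.

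The step I expect to be delicate is the local geometry of the Hirzebruch--Jung resolution: precisely, that the two coordinate branches of $\tfrac{1}{n}(1,q)$ --- which are exactly $\sigma(F_1^{(i)})$ and $\sigma(F_2^{(j)})$ --- attach transversally to the two ends $E_1, E_l$ of the chain, and in the order that sends the $F_1$-branch to the end carrying coefficient $q/n$ rather than $q'/n$. Establishing this transversality and pinning down the labelling of the ends (and thus the $q$ versus $q'$ convention) is the real content; I would extract it from the standard toric description of cyclic quotient singularities. Once the transversality and the identification of the ends are fixed, the determinant identities above are routine.
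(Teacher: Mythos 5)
The paper gives no proof of this proposition: it is imported verbatim from Polizzi [Pol10, Proposition 2.8], so there is no argument in the text to compare yours against. On its own merits your proof is correct and is the standard one. The global input ($F_1\cdot A_k=0$ and $F_1\cdot F_1^{(i)}=0$, the block-diagonal negative-definite intersection matrix determining the coefficients $a_j$, and the reduction of $-(F_1^{(i)})^2$ to the diagonal entries $-(M^{-1})_{11}$, $-(M^{-1})_{ll}$ of the inverse of a Hirzebruch--Jung block) is sound, and the determinant identities $\det M=(-1)^l n$, $\det M'=(-1)^{l-1}q$ do give $q/n$ and, via $\tfrac{n}{q'}=[b_l,\dots,b_l]$ reversed, $q'/n$ at the two ends. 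You have also correctly isolated the one step that is not pure linear algebra: that the strict transforms of $\sigma(F_1^{(i)})$ and $\sigma(F_2^{(j)})$ are the two coordinate branches of $\tfrac1n(1,q)$ and attach transversally to opposite ends of the chain, with the labelling of the ends fixing the $q$ versus $q'$ convention. That is indeed where the content lies, and the toric (or explicit Hirzebruch--Jung) description of the resolution supplies it; once it is in place your computation closes the argument.
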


We shall show now (Proposition \ref{bound}) that, fixed $\gamma$, $p_g$ and $q$, we can produce a finite list containing all possible signatures involved in the construction of product quotient surfaces with those values of $\gamma$, $p_g$ and $q$. 

Before doing this, we need to recall further invariants, the integers $\alpha_i$, which were already considered in our previous papers.

\begin{defin}\label{at}
$$
\alpha_1:=\frac{4\chi+2\gamma-\mu}{2\Theta_1}=\frac{\xi}{2\Theta_1},\ \ \ \ \
\alpha_2:=\frac{4\chi+2\gamma-\mu}{2\Theta_2}=\frac{\xi}{2\Theta_2}.
$$
\end{defin}

In fact, we have (cf. e.g. \cite{4names})
\begin{prop}\label{galfa}
$\alpha_i=g_{i+1}-1 \in \NN$.
\end{prop}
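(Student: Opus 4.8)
The plan is to reduce the statement to a one-line algebraic identity, combining two facts that are already available: the closed form $\xi(X)=\frac{4(g_1-1)(g_2-1)}{|G|}$ recorded in Remark \ref{xipq} (which itself comes from the formula for $\chi$ in Proposition \ref{forminf}), together with the Hurwitz formula \eqref{Hurwitz}. The equality $\alpha_i=g_{i+1}-1$ will then drop out of a cancellation, and the integrality is immediate from the hypothesis $g_i\geq 2$.

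First I would rewrite the Hurwitz relation $2g_i-2=|G|\Theta_i$ in the form $g_i-1=\frac{|G|\Theta_i}{2}$ for $i=1,2$. Substituting the expression for $\xi$ into Definition \ref{at} and using this for the factor $g_1-1$, I would compute
$$
\alpha_1=\frac{\xi}{2\Theta_1}=\frac{1}{2\Theta_1}\cdot\frac{4(g_1-1)(g_2-1)}{|G|}=\frac{4(g_2-1)}{2\Theta_1|G|}\cdot\frac{|G|\Theta_1}{2}=g_2-1.
$$
The same computation with the two factors exchanged (using $g_2-1=\frac{|G|\Theta_2}{2}$) gives $\alpha_2=g_1-1$.

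Reading the index cyclically, so that $g_{i+1}$ denotes $g_2$ when $i=1$ and $g_1$ when $i=2$ (consistent with the earlier observation that $F_1\cong C_2$ and $F_2\cong C_1$), the two displayed equalities are precisely $\alpha_i=g_{i+1}-1$. The membership $\alpha_i\in\NN$ is then automatic: since $g_1,g_2\geq 2$ by assumption, each $g_{i+1}-1$ is a positive integer.

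There is essentially no obstacle to overcome: once the substitution $g_i-1=\frac{|G|\Theta_i}{2}$ is lined up against the formula for $\xi$, the whole proposition is a single cancellation. The only points that warrant a word of care are the cyclic index convention and the fact that the cancellation is legitimate, i.e.\ that $\Theta_1,\Theta_2\neq 0$; the latter is guaranteed by their definition in the Notation section, where $\Theta>0$ is built into the setup.
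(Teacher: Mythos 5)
Your proof is correct and follows exactly the paper's own argument: substitute the closed form $\xi=\frac{4(g_1-1)(g_2-1)}{|G|}$ from Remark \ref{xipq} into Definition \ref{at} and cancel using the Hurwitz relation $2g_i-2=|G|\Theta_i$, with integrality coming from $g_i\geq 2$. No issues.
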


\begin{proof}
W.l.o.g. we can assume $i=1$. Then
$$\alpha_1=\frac{\xi}{2\Theta_1}=\frac{2(g_1-1)(g_2-1)}{|G|\Theta_1}=g_2-1\in \NN.$$
\end{proof}

The following inequality allows to bound the multiplicities in the signatures in terms of the genera of the involved curves.
\begin{theo}[\cite{wiman}]\label{4g+2}
Let $H$ be a cyclic group of automorphisms of a compact Riemann surface $C$ of genus $g \geq 2$. Then  $|H| \leq 4g+2$.
\end{theo}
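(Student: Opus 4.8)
The statement is Wiman's classical bound, so the natural engine is the Riemann--Hurwitz formula for the cyclic covering $C \to C/H$, combined with the arithmetic of cyclic branch data. Write $N:=|H|$ and let $C'=C/H$ have genus $g_0$. Every stabilizer is a subgroup of the cyclic group $H$, so the branching is encoded by a signature $(g_0;m_1,\dots,m_r)$ with each $m_i\mid N$, and the Hurwitz formula (\ref{Hurwitz}) reads $2g-2=N\,\Theta$ with $\Theta=\Theta(g_0;m_1,\dots,m_r)$. Since $g\geq 2$ we have $\Theta>0$. Substituting $g=1+N\Theta/2$, the desired inequality $N\le 4g+2$ is \emph{equivalent} to
$$
N\,(1-2\Theta)\le 6 .
$$
Hence the bound is automatic whenever $\Theta\geq \tfrac12$, and the whole problem reduces to controlling the signatures with $\Theta<\tfrac12$.

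A short case check on $(g_0;m_1,\dots,m_r)$ shows that $\Theta<\tfrac12$ forces $g_0=0$: if $g_0\geq 2$ then either there is branching and $\Theta\geq 2g_0-2\geq 2$, or the cover is unramified and $2g-2=N(2g_0-2)\geq 2N$ gives $N\le g-1$ directly; if $g_0=1$ then $r\geq 1$ and $\Theta\geq 1-\tfrac1{m_1}\geq\tfrac12$. For $g_0=0$ one needs $r\geq 3$ to have $\Theta>0$, while $r\geq 5$ already gives $\Theta\geq \tfrac r2-2\geq \tfrac12$. At this point I would bring in the only genuinely arithmetic input: a cyclic covering with $g_0=0$ corresponds to elements $h_1,\dots,h_r\in\ZZ/N$ of orders $m_i$ with $\sum_i h_i=0$ and $\langle h_1,\dots,h_r\rangle=\ZZ/N$. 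Since $\ZZ/N$ has at most one element of order $2$ (namely $N/2$), configurations with many small $m_i$ collapse; a direct inspection shows that the unique $r=4$ signature with $\Theta<\tfrac12$ is $(0;2,2,3,3)$, and the generation condition forces $N=6$, $g=2$, so $N=6\le 10$. This isolates the essential case $r=3$, $g_0=0$.

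In that case $\Theta=1-\sum_{i=1}^3\tfrac1{m_i}$, and writing $d_i:=N/m_i=\gcd(h_i,N)$ the target $N(1-2\Theta)\le 6$ becomes
$$
2(d_1+d_2+d_3)\le N+6 .
$$
The key remark is that, since $h_1+h_2+h_3=0$, each $h_k$ lies in the subgroup generated by the other two; hence every pair $h_i,h_j$ already generates $\ZZ/N$, i.e. $\gcd(h_i,h_j,N)=1$, which yields $\gcd(d_i,d_j)=1$. Thus $d_1,d_2,d_3$ are pairwise coprime and each divides $N$, so $d_1d_2d_3\mid N$ and in particular $N\geq d_1d_2d_3$. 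Combining this with $N\geq 2d_i$ (from $m_i\geq 2$), the inequality reduces to an elementary estimate for pairwise coprime integers, e.g. after ordering $d_1\le d_2\le d_3$ the representative case $d_1=1$ is just $(d_2-2)(d_3-2)\geq 0$. Equality holds precisely for $\{d_1,d_2,d_3\}=\{1,2,2g+1\}$, i.e. the signature $(0;2,2g+1,4g+2)$ realized by $y^2=x^{2g+1}-1$, recovering the extremal curves.

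The analytic heart --- Riemann--Hurwitz together with the reformulation $N(1-2\Theta)\le 6$ --- is routine. The delicate point, which I expect to be the main obstacle, is the \emph{completeness} of the reduction: one must be sure no small-$\Theta$ signature is overlooked and that each surviving one is actually realizable by a cyclic action. This is exactly where the arithmetic of $\ZZ/N$ does the work: the uniqueness of the involution $N/2$ eliminates the borderline $r=4$ cases, and the pairwise-coprimality of the $d_i$ is what turns an a priori infinite search in the principal case $r=3$ into the single elementary inequality above.
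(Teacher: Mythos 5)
The paper does not prove this statement: it is Wiman's classical 1895 bound, quoted with a citation and used as a black box, so there is no internal proof to compare yours against. Your argument is the standard Riemann--Hurwitz proof and is essentially correct: the reformulation of $|H|\le 4g+2$ as $N(1-2\Theta)\le 6$, the elimination of all signatures with $\Theta\ge\tfrac12$ (hence of $g_0\ge1$ and of $r\ge5$ when $g_0=0$), and, in the principal case $(0;m_1,m_2,m_3)$, the pairwise coprimality of the $d_i=\gcd(h_i,N)$ deduced from $h_1+h_2+h_3=0$ together with the generation condition, are all sound; the final inequality $2(d_1+d_2+d_3)\le N+6$ does follow from $d_1d_2d_3\mid N$ and $N\ge 2d_i$, with the caveat that the subcase $d_1=d_2=1$ must use $N\ge 2d_3$ rather than $N\ge d_2d_3$ (your $(d_2-2)(d_3-2)\ge0$ criterion fails there, though the conclusion does not). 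One sentence should be tightened: it is not true that $(0;2,2,3,3)$ is the unique $r=4$ signature with $\Theta<\tfrac12$ --- the signatures $(0;2,2,2,m)$, $(0;2,2,3,4)$ and $(0;2,2,3,5)$ also have $\Theta<\tfrac12$. What is true is that for a \emph{cyclic} group the order-two local monodromies are all equal to $N/2$ and the relation $\sum h_i=0$ then kills every one of these except $(0;2,2,3,3)$ with $N=6$, $g=2$; this is evidently what you mean by ``collapse'', but it is the one place where the completeness of the case analysis actually rests on the arithmetic, so it deserves to be written out.
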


In fact, an immediate consequence of Wiman's inequality is the following:
\begin{cor}\label{cornak}
$$\forall i,j\ \ m_i, n_j \leq  2\min\left( \left(\frac{\xi}{\Theta_1}+3\right),\left(\frac{\xi}{\Theta_2}+3\right) \right),$$
\end{cor}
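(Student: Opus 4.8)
The plan is to apply Wiman's inequality (Theorem \ref{4g+2}) twice to each stabilizer, exploiting the fact that the finite cyclic groups producing the multiplicities act faithfully on \emph{both} factors.

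First I would recall that each multiplicity $m_i$ in the signature of the action of $G$ on $C_1$ is, by definition, the order of the stabilizer $H \leq G$ of a ramification point of $C_1 \to C_1/G$, and that such a stabilizer is cyclic. Since $G$ acts faithfully on $C_1$ \emph{and} on $C_2$, the restriction to the subgroup $H$ is injective in both cases, so $H$ is a cyclic group of automorphisms of order $m_i$ of each of the two curves. Applying Theorem \ref{4g+2} to the action of $H$ on $C_1$ gives $m_i \leq 4g_1+2$, while applying it to the action of $H$ on $C_2$ gives $m_i \leq 4g_2+2$; hence $m_i \leq \min(4g_1+2,\,4g_2+2)$.

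Next I would rewrite the two genera in terms of $\xi$ and the $\Theta_j$. By Proposition \ref{galfa} together with Definition \ref{at} one has $g_1 - 1 = \alpha_2 = \frac{\xi}{2\Theta_2}$ and $g_2 - 1 = \alpha_1 = \frac{\xi}{2\Theta_1}$, so that $4g_1+2 = 2\left(\frac{\xi}{\Theta_2}+3\right)$ and $4g_2+2 = 2\left(\frac{\xi}{\Theta_1}+3\right)$. Substituting into the previous bound yields $m_i \leq 2\min\left(\frac{\xi}{\Theta_1}+3,\ \frac{\xi}{\Theta_2}+3\right)$, which is the asserted inequality. The very same argument, with the roles of the two signatures interchanged, applies verbatim to every $n_j$ (the stabilizers attached to the action of $G$ on $C_2$), giving the identical bound for the $n_j$.

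I expect the only genuinely substantive point --- rather than an obstacle --- to be the observation that one must use \emph{both} Wiman bounds and take their minimum: applying the inequality only to the curve on which the stabilizer is ``naturally'' ramified would produce just one of the two estimates and not the symmetric bound in the statement. The remaining steps are the routine faithfulness check, which is immediate since a subgroup of a faithfully acting group acts faithfully, and the elementary algebraic substitution of the genus formulas from Proposition \ref{galfa} and Definition \ref{at}.
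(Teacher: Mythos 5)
Your proof is correct and is exactly the argument the paper has in mind: the paper states Corollary \ref{cornak} as ``an immediate consequence of Wiman's inequality'' without writing out details, and the intended content is precisely your observation that the cyclic stabilizer of order $m_i$ (resp.\ $n_j$) acts faithfully on \emph{both} curves, so Wiman's bound $4g+2$ applies to each factor, and the identities $g_1-1=\xi/(2\Theta_2)$, $g_2-1=\xi/(2\Theta_1)$ from Proposition \ref{galfa} and Definition \ref{at} convert $4g_k+2$ into $2(\xi/\Theta_{3-k}+3)$. You also correctly identify that the ``cross'' bound coming from the other curve is the essential point, as it is the one actually invoked in the proof of Proposition \ref{bound}.
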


The next proposition gives upper bounds for $r,s$, $m_i$ and $n_j$, but in terms of $\xi$ and $g(C_i/G)$.
\begin{prop}\label{bound}
The following inequalities hold:
\begin{itemize}
\item[a)] $r\leq \xi+4-2g(C_1/G)$;
\item[b)] if $g(C_1/G)>0$ or $r >3$, then $\forall i$ 
\begin{align*}
m_i \leq&3+ \frac{2\xi+1+\sqrt{(3(4g(C_1/G)+r-3)+2\xi+1)^2-12(4g(C_1/G)+r-3)}}{4g(C_1/G)+r-3}\\
<&6+\frac{4\xi+2}{4g(C_1/G)+r-3}
\end{align*}
\item[c)] if $g(C_1/G)=0$ and $r \leq 3$, then $r=3$ and 
$$m_i \leq 6[\xi+1+\sqrt{\xi(\xi+2)}]< 12(\xi+1)
$$
\end{itemize}
Analogous bounds hold for $s, n_j$.
\end{prop}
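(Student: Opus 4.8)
The plan is to extract every bound from two inputs: the inequality $\Theta_1 \le \xi/2$, which follows from Proposition \ref{galfa} since $\alpha_1 = \xi/(2\Theta_1) = g_2 - 1 \ge 1$ (recall $g_2 \ge 2$); and Wiman's inequality in the form of Corollary \ref{cornak}, which gives $m_i \le 2(\xi/\Theta_1 + 3)$, equivalently $\Theta_1 \le 2\xi/(m_i - 6)$ once $m_i > 6$. For part (a) this is immediate: since $1 - 1/m_j \ge 1/2$ for all $j$, one has $\Theta_1 \ge 2g(C_1/G) - 2 + r/2$, and comparing with $\Theta_1 \le \xi/2$ yields $r \le \xi + 4 - 4g(C_1/G)$, which is even stronger than the asserted bound.

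The uniform idea behind (b) and (c) is to bound $\Theta_1$ from below while keeping the single term $1 - 1/m_i$ explicit, and then substitute into Wiman's inequality to produce a quadratic inequality in $m_i$. For (b), set $N := 4g(C_1/G) + r - 3$; the hypothesis ``$g(C_1/G) > 0$ or $r > 3$'' is precisely what makes $N > 0$. Estimating the $r-1$ terms other than $1 - 1/m_i$ below by $1/2$ gives $\Theta_1 \ge N/2 - 1/m_i$. If $m_i \le 6$ the claim is trivial, since its right-hand side exceeds $6$, so assume $m_i > 6$; then $N/2 - 1/m_i \le 2\xi/(m_i - 6)$, and clearing denominators turns this into $\frac{N}{2}m_i^2 - (3N + 2\xi + 1)m_i + 6 \le 0$. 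As $N > 0$ the parabola opens upward, so $m_i$ lies below its larger root, which is exactly the displayed expression; the strict bound $m_i < 6 + (4\xi+2)/N$ then follows from $\sqrt{(3N+2\xi+1)^2 - 12N} < 3N + 2\xi + 1$, valid because $N > 0$.

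For (c), the constraints $g(C_1/G) = 0$ and $\Theta_1 > 0$ force $\sum_j(1 - 1/m_j) > 2$, hence $r \ge 3$, so under $r \le 3$ we must have $r = 3$ and $N = 0$; the estimate of (b) now degenerates and must be sharpened. The key point is that, for any chosen index $i$, the two remaining multiplicities cannot both equal $2$ (otherwise $\Theta_1 = -1/m_i < 0$), and for integers $\ge 2$ that are not both $2$ one always has $1/m_j + 1/m_k \le 5/6$. Hence $\Theta_1 = 1 - 1/m_i - 1/m_j - 1/m_k \ge 1/6 - 1/m_i$. Substituting into Wiman's inequality (again assuming $m_i > 6$) gives $(m_i - 6)^2 \le 12\xi m_i$, i.e. $m_i^2 - 12(\xi+1)m_i + 36 \le 0$, whose larger root is $6[\xi + 1 + \sqrt{\xi(\xi+2)}]$; the refinement $\sqrt{\xi(\xi+2)} < \xi + 1$ yields the strict form $< 12(\xi+1)$. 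The bounds for $s$ and $n_j$ follow verbatim after exchanging the roles of $C_1$ and $C_2$.

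The main obstacle is not the algebra but choosing the right lower bounds for $\Theta_1$: keeping $1 - 1/m_i$ separate while crushing the other terms down to $1/2$ each is what produces the $N > 0$ dichotomy between (b) and (c), and the delicate input in (c) is recognizing that hyperbolicity excludes the pair $(2,2)$ and therefore forces $1/m_j + 1/m_k \le 5/6$. Once these estimates are in place, everything reduces to reading off the larger root of an upward-opening parabola and a one-line simplification of the surd.
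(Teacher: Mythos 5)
Your proof is correct and follows essentially the same route as the paper: part a) from $2\Theta_1\le\xi$ (i.e.\ $\alpha_1\ge 1$) combined with $\Theta_1\ge 2g(C_1/G)-2+\frac r2$, and parts b), c) from Wiman's bound in the form of Corollary \ref{cornak} together with a lower bound on $\Theta_1$ isolating the term $1-\frac1{m_i}$ (using in c) that the two remaining multiplicities cannot both be $2$), which yields the identical quadratic inequalities. The only cosmetic differences are that you split off the trivial case $m_i\le 6$ before clearing denominators, where the paper instead multiplies directly by $m_1(4g(C_1/G)+r-3)-2>0$, and that you note a) actually holds with $-4g(C_1/G)$ in place of $-2g(C_1/G)$.
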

\begin{proof}
a) By $1\leq \alpha_1=\frac{\xi}{2\Theta_1}$, $2\Theta_1 \leq \xi$. Since by definition $\Theta_1 \geq 2g(C_1/G)-2 +\frac{r}{2}$,
$$
r \leq 2\Theta_1+4-2g(C_1/G) \leq \xi+4-2g(C_1/G)
$$

b) If $r=0$ there is nothing to prove, so we may assume $r\geq 1$.
Let $m_1$ be the maximum of the $m_i$. Note that  by definition
$$
\Theta_1 \geq 2g(C_1/G)+ \frac{r-3}{2}-\frac1{m_1}=\frac{m_1(4g(C_1/G)+r-3)-2}{2m_1}.
$$
By assumption $m_1(4g(C_1/G)+r-3)-2 \geq 0$. Moreover  $m_1(4g(C_i)+r-3)-2 = 0$ implies that the signature is $(0;2,2,2)$, which implies $\Theta_1=-\frac12<0$, a contradiction.
So $m_1(4g(C_1/G)+r-3)-2 >0$, whence, from corollary \ref{cornak},
$$
m_1 \leq 2\left(\frac{\xi}{\Theta_1} +3\right)\leq 
2\left(\frac{2m_1\xi}{m_1(4g(C_1/G)+r-3)-2} +3\right),
$$
so
$$
m_1^2(4g(C_1/G)+r-3)-2m_1(3(4g(C_1/G)+r-3)+2\xi+1)+12\leq 0.
$$
This immediately implies the desired inequality.

c) By corollary \ref{4chi+gamma-mu>=1/2} we have $\xi \geq \frac 12>0$, and therefore the upper bound for $m_i$ we want to prove is $>6$. Whence we can assume w.l.o.g that  $m_1>6$.  

Since $\Theta_1>0$ it follows $r \geq 3$  (so $r=3$) and $\Theta_1 +\frac{1}{m_1} \geq \frac16$ with equality if and only if the signature is $(0;2,3,m_1)$.
So $\Theta_1\geq \frac{m_1-6}{6m_1}$ and
$$m_1 \leq 2\left(\frac{\xi}{\Theta_1}+3\right)  \leq 2\left(\frac{6m_1\xi}{m_1-6}+3\right)$$
which is equivalent to
$$m_1^2-12\left( \xi+1 \right)m_1+36\leq 0$$
and we can conclude as before.
\end{proof}

We are now prepared to give the necessary bounds in order to show that given $\gamma$, $p_g$ and $q$, there is a finite number of product-quotient surfaces with these invariants.

 Recall that $g(C_i/G)$, $i=1,2$, is bounded by $q$ (Proposition \ref{forminf}), whence it is enough to produce upper bounds for the remaining natural numbers involved, i.e., we need to bound $r,s$, $m_i$ and $n_j$ in terms of $p_g$ and $q$. t
\begin{rem}\label{boundcardbasket} If $S$ is of general type then
$$
\# \Sing X= \# \mathfrak{B}(X)  \leq 8\chi+4\gamma-1.
$$
\end{rem}
\begin{proof}
The  inequality follows by Corollary \ref{4chi+gamma-mu>=1/2} since, by the definition of $\mu$, $\# \mathfrak{B}(X) \leq 2 \mu$.
\end{proof}

We give now an upper bound for the multiplicity of each singularity of $X$ in terms of $p_g$, $q$ and $\gamma$. This, together with remark \ref{boundcardbasket}, produces a finite list of possibilities for the basket of singularities of the quotient model of a product quotient surface with given values of $p_g$, $q$ and $\gamma$.

\begin{prop}\label{boundmult} If $S$ is of general type then
\begin{itemize}
\item[a)] If $\frac{q}{n} \in {\mathfrak B}$, then $n\leq12(4\chi+2\gamma-1)$;
\item[b)] if moreover $\gamma\neq 0$, then $n\leq12(4\chi+2\gamma-\frac32)$.
\end{itemize}
\end{prop}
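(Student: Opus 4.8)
The plan is to reduce the bound on $n$ to the bound on the largest multiplicity appearing in a signature, which Proposition \ref{bound} already controls, and then to supply a lower bound for $\mu$ coming from the integrality in Proposition \ref{PolizziInt}. Proving the estimate for the largest multiplicity proves it for every element of $\mathfrak B$, so I may assume $n$ is maximal. By Serrano's divisibility (Remark \ref{ndividesmi}) the point $x$ of type $\frac{q}{n}$ lies on some $\sigma(F_1^{(i)})$ with $n\mid m_i$, hence $n\le m_i$; and among the cases of Proposition \ref{bound} the bound on $m_i$ is weakest in case c), so, using $\xi\ge\frac12$ (Corollary \ref{4chi+gamma-mu>=1/2}), in every case
$$
n\le m_i\le 6\bigl[\xi+1+\sqrt{\xi(\xi+2)}\bigr]=:M(\xi).
$$

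The heart of the argument is a lower bound for $\mu$. By Proposition \ref{PolizziInt} the contributions $\frac{q}{n}(y)$ of the singular points $y$ on $\sigma(F_1^{(i)})$ sum to a positive integer $e$; since $0<\frac{q}{n}(x)<1$ the point $x$ is not alone, and because $\gcd(q,n)=1$ the reduced denominator of $e-\frac{q}{n}(x)$ is again $n$, forcing a further point of multiplicity $n$ on the fibre. As $n$ is maximal, a short estimate shows that every admissible configuration satisfies $\mu\ge 2\bigl(1-\frac1n\bigr)=2-\frac2n$. Writing $N:=4\chi+2\gamma$ and substituting $\xi=N-\mu\le N-2+\frac2n$ (Definition \ref{xi}), one checks that $M\bigl(N-2+\frac2n\bigr)\le 12(N-1)$ is equivalent to $n\ge 8(N-1)$. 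Hence if $n\ge 8(N-1)$ then $n\le M(\xi)\le 12(N-1)$, and if $n<8(N-1)$ then $n<12(N-1)$ trivially; this is a).

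For b) I would use Proposition \ref{gammasym}: the complementary pair $\{\frac{q}{n},\frac{n-q}{n}\}$ achieving the minimum above has $\gamma(\frac{q}{n})+\gamma(\frac{n-q}{n})=0$, and nodes are $\gamma$-neutral as well. Therefore the hypothesis $\gamma\ne0$ cannot be realised by such minimal configurations, and at least one additional singular point of multiplicity $\ge 3$ must occur, raising $\mu$ by at least $\frac12$, so that $\mu\ge\frac52-\frac2n$. Repeating the computation, $M\bigl(N-\frac52+\frac2n\bigr)\le 12\bigl(N-\frac32\bigr)$ is equivalent to $n\ge 8\bigl(N-\frac32\bigr)$, and the same dichotomy gives b).

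The step I expect to be the main obstacle is precisely this sharp lower bound for $\mu$, and above all its refinement under $\gamma\ne0$. One has to exclude that a non-complementary configuration on the fibre, still compatible with the integrality of Proposition \ref{PolizziInt}, produces $\gamma\ne0$ while keeping $\mu$ below $\frac52-\frac2n$. Numerically such configurations occur, but a finite case analysis of the ways in which the $\frac{q}{n}(y)$ can sum to an integer shows that they force $n$ to be small (for instance $n=6$ for a fibre carrying a node and a point of type $\frac13$), so that the asserted inequality then holds for trivial reasons. Carrying out this case analysis cleanly and exhaustively is where the actual work lies.
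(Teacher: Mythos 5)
Part a) of your argument is essentially the paper's: combine $n\mid m_i$ (Remark \ref{ndividesmi}), the signature bound of Proposition \ref{bound}, and the lower bound $\mu\ge 2-\frac2n$ extracted from the integrality in Proposition \ref{PolizziInt}. One intermediate claim is stated too strongly --- the integrality of $e-\frac qn$ forces a second point of multiplicity exactly $n$ only when there is exactly one further point on the fibre; with two or more further points you instead get $\mu\ge 2-\frac1n$ from the crude bound $1-\frac1{n_y}\ge\frac12$ --- but the conclusion $\mu\ge 2-\frac2n$ is correct either way, and your dichotomy in $n$ (using the sharp form of the bound in case c) of Proposition \ref{bound}) is a clean way to finish that actually avoids the separate treatment of $4\chi+2\gamma-1=1$ which the paper needs because it works with the weaker estimate $m_i<12(\xi+1)$.

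Part b) contains a genuine gap, and you have correctly located it yourself: the refined bound $\mu\ge\frac52-\frac2n$ is simply false. A fibre carrying points of types $\frac16$ (or $\frac56$), $\frac12$ and $\frac13$ (or $\frac23$), with no other singular points, is compatible with Proposition \ref{PolizziInt} and gives $\mu=2<\frac{13}{6}$, and such configurations are not confined to small $n$ in a way you have verified. The paper avoids this entirely by proving only the weaker inequality $\mu\ge\frac52-\frac3n$: since a basket with exactly two elements must consist of a complementary pair $\{\frac qn,\frac{n-q}n\}$ (their contributions sum to $1$), Proposition \ref{gammasym} forces $\gamma=0$ for it, so $\gamma\ne0$ implies at least \emph{three} singular points, and a short computation (which does have to absorb precisely the configurations you worry about, e.g.\ the one above attains equality $\mu=2=\frac52-\frac36$) yields $\frac52-\frac3n$. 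Note that this weaker bound is all your own machinery needs: repeating your algebra with $\xi\le N-\frac52+\frac3n$ turns the dichotomy threshold into $n\ge 12\bigl(N-\frac32\bigr)$, and both branches still give $n\le 12\bigl(N-\frac32\bigr)$. So the fix is not the exhaustive case analysis you anticipate, but replacing your too-strong intermediate claim by the correct one and justifying ``at least three singular points'' via the two-element-basket argument rather than via the assumption that a complementary pair is present.
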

\begin{proof}
a) If the basket is empty, then the claim is empty. Otherwise assume that there is a singular point $x$ of type  $\frac{q}{n}$, and let $m_i$ be the multiplicity of the central fibre of $f_1$ containing it.Then by lemma
\ref{PolizziInt} there is at least one further singular point on the same fibre, and, if there is only one, it is of type $\frac{n-q}{n}$. It follows $\mu \geq 2-\frac{2}{n}$. 

By proposition \ref{bound} and remark \ref{ndividesmi} then $n \leq m_i<12(4\chi+2\gamma-\mu+1)\leq 12(4\chi+2\gamma-1+\frac2n)$. Therefore
$$ n-\frac{24}{n}<12(4\chi+2\gamma-1).$$

If $4\chi+2\gamma-1\geq 2$, then the righthand side is bigger than $24$, hence 
\begin{equation}\label{gamma=0:n<=}
n\leq12(4\chi+2\gamma-1).
\end{equation}

By Proposition \ref{gamma+pg} and Corollary \ref{4chi+gamma-mu>=1/2}, $4\chi+2\gamma-1$ is a positive integer, so it remains to consider only the case  $4\chi+2\gamma-1=1$. In this case Corollary \ref{4chi+gamma-mu>=1/2} yields $\mu \leq \frac32$, and therefore either there are three points of multiplicity $2$ or there are exactly two singular points, both of multiplicity $n \leq 4$. In all cases (\ref{gamma=0:n<=}) hold.

b) If the basket contains exactly $2$ elements, they are by Proposition \ref{PolizziInt} of respective type $\frac{q}{n}$ and  $\frac{n-q}{n}$ and then by Proposition \ref{gammasym} $\gamma=0$. Therefore
$\gamma \neq 0$ implies that there are at least three singular points, and
a straightforward computation gives $\mu \geq \frac52-\frac{3}{n}$, whence 
$$n-\frac{36}{n}< 12\left(4\chi+2\gamma-\frac32 \right).$$

The claim follows by the same argument as in the previous case.
\end{proof}

\begin{rem}
We have shown that the classification problem is finite. In fact, we know that there are finitely many possibilities for the basket of singularities. If we fix a basket $\mathfrak{B}$, then we have to show that there are finitely many possibilities for
\begin{itemize}
\item the order of the group $G$,
\item the two types $t_1=(g(C_1/G);m_1,\ldots , m_r)$ and $t_2=(g(C_2/G);n_1,\ldots , n_s)$.
\end{itemize}
Note that by proposition \ref{robavecchia}, a), $|G|$ is determined by $t_1$ and $t_2$. The length $r$ (resp. $s$) of $t_1$ (resp. $t_2$) is bounded by proposition \ref{bound}, a), whereas a bound for the $m_i$ (resp. $n_j$) is given y loc.cit. b), c).
\end{rem}

We have now enough elements to write an algorithm producing, for each fixed value of the triple 
$(p_g, q, \gamma)$, all product quotient surfaces with those values of $p_g$, $q$ and $\gamma$. Still, for implementing a reasonable (quick) algorithm it is convenient to use also the  following additional informations which we have proved in \cite{bp}.

\begin{prop}\label{robavecchia}\ 
\begin{itemize}
\item[a)] $|G|=\frac{4\alpha_1\alpha_2}{\xi}=\frac{\xi}{\Theta_1\Theta_2}$;
\item[b)] for each $i$,  $ \frac{I\xi}{\Theta_1 m_i} \in \NN$; 
\item[c)] there are at most $\frac{|{\mathfrak B}|}{2}$ indices such that 
$\frac{I \xi}{2\Theta_1 m_i} \not\in \NN$; 
\item[d)] $m_i\leq \frac{1+I\xi}{f}$, where $f:=\max(\frac 16, \frac{r-3}{2})$;
\item[e)] except for at most $\frac{|{\mathfrak B}|}{2}$ indices, it holds: $m_i\leq \frac{2+ I\xi}{2f}$
\end{itemize}
Similar statements as b), c), d) obviously hold for $(n_1, \ldots , n_s)$.
\end{prop}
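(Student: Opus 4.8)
The plan is to treat the five statements in increasing order of difficulty, deriving everything from two recorded identities. From Proposition \ref{galfa} and Definition \ref{at} I note $\frac{\xi}{\Theta_1}=2\alpha_1=2(g_2-1)$ and $\frac{\xi}{\Theta_2}=2(g_1-1)$. For (a), substituting $\alpha_1=g_2-1$, $\alpha_2=g_1-1$ and $\xi=\frac{4(g_1-1)(g_2-1)}{|G|}$ (Remark \ref{xipq}) into $\frac{4\alpha_1\alpha_2}{\xi}$ returns $|G|$; alternatively, multiplying the two Hurwitz identities (\ref{Hurwitz}) gives $4(g_1-1)(g_2-1)=|G|^2\Theta_1\Theta_2$, hence $\xi=|G|\Theta_1\Theta_2$ and $|G|=\frac{\xi}{\Theta_1\Theta_2}$, while inserting $\alpha_i=\frac{\xi}{2\Theta_i}$ recovers the middle expression.

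For (b) the key is to read $\frac{\xi}{\Theta_1 m_i}$ as an intersection number on $X$. Since the general fibre $\sigma(F_1)\cong C_2$ sits in the smooth locus with self-intersection $0$, adjunction on $X$ gives $K_X\cdot\sigma(F_1)=2g_2-2$. Pushing the fibre relation $F_1\equiv m_iF_1^{(i)}+\sum a_jA_j$ forward by $\sigma$ (which kills the $A_j$) yields $\sigma(F_1)\equiv m_i\,\sigma(F_1^{(i)})$, so $K_X\cdot\sigma(F_1^{(i)})=\frac{2g_2-2}{m_i}=\frac{\xi}{\Theta_1 m_i}$. Because $IK_X$ is Cartier, its restriction to the curve $\sigma(F_1^{(i)})$ has integral degree, whence $\frac{I\xi}{\Theta_1 m_i}=(IK_X)\cdot\sigma(F_1^{(i)})\in\ZZ$, and positivity forces membership in $\NN$.

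For (c), I observe that $\frac{I\xi}{2\Theta_1 m_i}=\tfrac12(IK_X)\cdot\sigma(F_1^{(i)})$, so by (b) this fails to lie in $\NN$ exactly when $(IK_X)\cdot\sigma(F_1^{(i)})$ is odd. I would then show that such fibres carry at least two singular points. If $\sigma(F_1^{(i)})$ contains no singular point of $X$, then Proposition \ref{PolizziInt} gives $(F_1^{(i)})^2=0$, $\sigma$ is a local isomorphism there, and adjunction yields $K_X\cdot\sigma(F_1^{(i)})=2g(C_2/H_i)-2$ (with $H_i$ the cyclic stabilizer of order $m_i$), which is even; moreover a fibre cannot carry exactly one singular point, since a single summand $\frac{q}{n}\in(0,1)$ can never make $\sum\frac{q}{n}(x)\in\NN$. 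Hence every fibre with odd $(IK_X)\cdot\sigma(F_1^{(i)})$ meets at least two singular points, and as by Remark \ref{ndividesmi} the $|\mathfrak{B}|$ singular points are partitioned among the fibres, there are at most $\frac{|\mathfrak{B}|}{2}$ such fibres.

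Finally, (d) and (e) follow by combining these integrality facts with the elementary inequality $\Theta_1+\frac1{m_i}\ge f$. Indeed $\Theta_1+\frac1{m_i}=2g(C_1/G)-1+\sum_{j\ne i}\bigl(1-\frac1{m_j}\bigr)$, which I would bound below by $\max(\frac16,\frac{r-3}{2})$ through a short case analysis, the only delicate case being $g(C_1/G)=0$, $r=3$, where hyperbolicity $\sum\frac1{m_j}<1$ forces $\frac1{m_j}+\frac1{m_k}\le\frac56$ (attained at $(2,3,7)$). Then for (d), part (b) gives $\Theta_1 m_i\le I\xi$, so $m_i f-1\le m_i\Theta_1\le I\xi$; for (e), the indices with $\frac{I\xi}{2\Theta_1 m_i}\in\NN$ (all but $\frac{|\mathfrak{B}|}{2}$ by (c)) satisfy $2\Theta_1 m_i\le I\xi$, hence $2m_i f-2\le I\xi$. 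I expect the genuine obstacle to be (b)--(c): one must correctly identify $\frac{\xi}{\Theta_1 m_i}$ with $K_X\cdot\sigma(F_1^{(i)})$ and control its parity via the fibre multiplicity and the singular points lying on it, i.e. the truly geometric input; parts (a), (d), (e) are then formal.
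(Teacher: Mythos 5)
Your proposal is correct and, on the parts the paper actually argues (a, d, e), it follows the same route: a) is the same substitution via Remark \ref{xipq} and Proposition \ref{galfa}, and d), e) rest on exactly the inequality $\Theta_1+\frac{1}{m_i}\geq f$ combined with the integrality of $\frac{I\xi}{\Theta_1 m_i}$ (resp.\ of $\frac{I\xi}{2\Theta_1 m_i}$ for the good indices). For b) and c) the paper gives no argument at all, simply citing \cite{bp}, Proposition 1.13; your self-contained proof --- identifying $\frac{\xi}{\Theta_1 m_i}$ with $K_X\cdot\sigma(F_1^{(i)})$ via $\sigma_*F_1\equiv m_i\,\sigma(F_1^{(i)})$ and the Cartier property of $IK_X$, and then controlling parity by noting that a fibre avoiding $\Sing X$ has even canonical degree while, by Proposition \ref{PolizziInt}, no fibre can contain exactly one singular point, so that each bad index accounts for at least two points of $\mathfrak{B}$ --- is a correct reconstruction of that cited proof, and your case analysis showing $\Theta_1+\frac{1}{m_i}\geq\frac16$ (with equality only at signature $(0;2,3,7)$) is also sound.
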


\begin{proof}
a) follows by Remark \ref{xipq} and Proposition \ref{galfa};

b-c) see \cite{bp}, proposition 1.13;

d) let $m_1$ be the biggest of the $m_i$; then $\Theta_1+\frac{1}{m_1} \geq f$ whence:
$$
m_i \leq m_1 \leq \frac{1+\Theta_1m_1}f \leq \frac{1+I\xi}f;
$$
where $f:=\max(\frac 16, \frac{r-3}{2})$;

e) similar.
\end{proof}

We can now describe explicitly an algorithm producing all product quotient surfaces of general type with fixed $p_g$, $q$ and $\gamma$.

Indeed, Corollary \ref{boundcardbasket} and Proposition \ref{boundmult} produce, once fixed $p_g$, $q$ and $\gamma$, a finite list of possible baskets. The basket determines also $\mu$, $l$ and $\xi$.

Moreover,  $0 \leq g(C_1/G) \leq q$ varies also in a finite set (and determines $g(C_2/G)=q-g(C_1/G)$).

For each basket in the list, and for each choice of $g(C_1/G)$, Proposition \ref{bound} gives a finite list of possible signatures for the action of $G$ on $C_1$ (and similarly on $C_2$).
Most of the signature obtained can be excluded by using the other conditions we know: 
\begin{itemize}
\item Remark \ref{ndividesmi} ensures that each signature contains a multiple of the multiplicity of each singularity;
\item $\alpha \in \NN$;
\item Proposition \ref{robavecchia}, b), c), d), e).
\end{itemize}

Finally, for each pair of signatures, we can run a search on all groups of the order predicted by Proposition \ref{bound}, a), for pair of generating vectors of the prescribed signatures.

We have implemented this algorithm in MAGMA (cite{magmaref}) in the case $q=0$, the interested reader may download the commented script from

 \url{http://www.science.unitn.it/~pignatel/papers/RegP-QByPgGamma.magma}

The command {\em ExistingSurfaces($p_g$,$\gamma$,$M$)} produces two outputs: a list of regular product-quotient surfaces with the given values of $p_g$ and $\gamma$, and quotient model whose singularity of maximal multiplicity has multiplicity $M$, and a list of {\it skipped} cases, pairs (group,signature) 
which the computer could not compute (for technical reasons: if there is a regular product-quotient surface with those values of $p_g$ and $\gamma$ which is not in the first output, group and signature are in the second output. 

To get all surfaces product-quotient surface with given values of $p_g$ and $\gamma$ one should run it with $M$ up to the maximum predicted in Proposition \ref{boundmult}, and then check the second output for missing surfaces. In all cases we run we could show, by argument similar to those used in \cite{bp}, that the first list is complete; in other words, that the computation skipped by the computer do not give any surface.

\section{$\gamma$ detects minimality?}

In \cite{bp} the authors ran a computer program whose output lists all product-quotient surfaces with $p_g=0$ and $K_S^2 \geq 1$. Inspecting the output it turned out that all surfaces but one are minimal (hence of general type). All the minimal product-quotient surfaces satisfy $\gamma(S) = 0$, while the only non-minimal surface in the list has $\gamma = 1$. It seems therefore natural to conjecture that $\gamma$ is related to the minimality of a product-quotient surface. Or, more ambitiously, that one can bound the number of exceptional (-1)-cycles on a product-quotient surface in terms of $\gamma$.

We pose the following 
\begin{conj}

Let $S$ be a regular product-quotient surface of general type. Then 
$$\gamma(S) + p_g(S) = 0 \iff S \ \rm{is} \ \rm{minimal}.$$

\end{conj}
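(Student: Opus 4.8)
The plan is to establish the statement in the (provable) case of vanishing geometric genus, i.e. for regular surfaces with $q=0$ and $p_g=0$, hence $\chi=1$; this is Theorem \ref{gamma0}, the part of the conjecture accessible from the machinery set up above. Since $p_g=0$, Proposition \ref{gamma+pg} already gives $\gamma=\gamma(X)+p_g(S)\in\NN$, so $\gamma\geq 0$ and the claim reduces to the equivalence $\gamma=0\iff S$ is minimal. As $S$ is of general type, minimality is equivalent to the absence of $(-1)$-curves, equivalently to $K_S$ being nef. The first structural reduction I would make is that the only candidates for $(-1)$-curves are the reduced central components $F_1^{(i)}$, $F_2^{(j)}$ of the multiple fibres: an exceptional curve $A_a$ of $\sigma$ has $A_a^2\leq -2$, hence $K_S\cdot A_a=-2-A_a^2\geq 0$ and is never a $(-1)$-curve, while any curve not contained in a fibre of $f_1$ or $f_2$ meets the nef classes $F_1,F_2$ positively. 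By adjunction a central component $F_1^{(i)}$ is a $(-1)$-curve precisely when it is rational and $(F_1^{(i)})^2=-1$.

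The engine of both implications is an identity linking $\gamma$ to the self-intersections of the central components. Summing Proposition \ref{PolizziInt} over all singular points, and recalling that a point of type $\frac{q}{n}$ contributes $\frac{q}{n}$ to $-(F_1^{(i)})^2$ and $\frac{q'}{n}$ to $-(F_2^{(j)})^2$, one obtains $\sum_i(F_1^{(i)})^2+\sum_j(F_2^{(j)})^2=-\sum_x\frac{q+q'}{n}(x)$. Comparing with the definition $\gamma_x=\tfrac16\big[\tfrac{q+q'}{n}+\sum_a(b_a-3)\big]$, where $b_a=-A_a^2$ runs over the Hirzebruch--Jung string of $x$, yields
\begin{equation*}
6\gamma=-\sum_i (F_1^{(i)})^2-\sum_j (F_2^{(j)})^2+\sum_x\sum_a (b_a-3).
\end{equation*}
Feeding in adjunction, $K_S\cdot F_1^{(i)}=2g(F_1^{(i)})-2-(F_1^{(i)})^2$ (and its $f_2$-analogue) together with $K_S\cdot A_a=b_a-2$, this recasts $6\gamma$ as a signed combination of the $K_S$-degrees of the central components and the exceptional curves, the Euler terms $2g(F_i^{(j)})-2$, and the string length $l$. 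For the direction $\gamma=0\Rightarrow S$ minimal I would use that Proposition \ref{gamma+pg} forces maximal Picard number, so that $NS(S)_\QQ=\langle F_1,F_2,A_1,\dots,A_l\rangle$; the identity then constrains the local data $\frac{q}{n}(x)$ on each fibre so tightly that no rational central component can reach self-intersection $-1$, ruling out $(-1)$-curves. For $S$ minimal $\Rightarrow\gamma=0$ I would instead exploit nefness of $K_S$: all $K_S\cdot F_i^{(j)}\geq 0$ and $K_S\cdot A_a\geq 0$, and I would combine this with the identity and with $\gamma\geq 0$ to pin $\gamma$ to $0$.

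The hard part will be the genus bookkeeping and the exclusion of "bi-horizontal" $(-1)$-curves. Controlling the terms $2g(F_i^{(j)})-2$ requires computing $g(F_1^{(i)})=g(C_2/\langle h_i\rangle)$ (with $h_i$ a generator of the order-$m_i$ stabiliser) via Riemann--Hurwitz, and the naive summation of the identity only yields a lower bound for $\gamma$, reflecting the fact that $\gamma\geq 0$ is itself a global Hodge-theoretic statement (Propositions \ref{h2}, \ref{gamma+pg}) rather than a pointwise one — indeed individual $\gamma_x$ can be negative by Proposition \ref{gammasym}. The crux is therefore to upgrade this to a \emph{per-fibre} statement: that each multiple fibre contributes nonnegatively to $\gamma$, with strict positivity forcing a rational central component of self-intersection $-1$ in that fibre. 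Even with maximal Picard number in hand for one direction, I expect that excluding $(-1)$-curves meeting both fibrations positively (where $E^2=2(E\cdot F_1)(E\cdot F_2)/(F_1\cdot F_2)+(\text{negative exceptional part})$ can in principle equal $-1$) will need a separate argument intersecting $E$ with $K_S$ expressed in the basis, and possibly the finiteness/enumeration of baskets (Corollary \ref{boundcardbasket}, Proposition \ref{boundmult}) to close the remaining cases.
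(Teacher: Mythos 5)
Your plan is a conceptual argument, but the paper explicitly states that no such argument is known: the only case of the conjecture it proves (Theorem \ref{gamma0}, i.e.\ $p_g=0$) is established by running the MAGMA script for $p_g=\gamma=0$, observing that the surfaces in the output with $K_S^2>0$ are already known from \cite{bp} to be minimal with $\gamma=0$, and then excluding general type case by case for the ten surfaces with $K_S^2\leq 0$ listed in Table \ref{K2<1}. That exclusion uses Proposition \ref{E2KE}: a putative $(-1)$-curve $E$ is written as $\frac{\mu_1}{|G|}F_1+\frac{\mu_2}{|G|}F_2-\sum a_iA_i$ (legitimate precisely because $p_g+\gamma=0$ forces $\{F_1,F_2,A_i\}$ to span $H^2(S,\QQ)$), the conditions $K_SE=E^2=-1$ become, for each admissible intersection vector $e$, a Diophantine system in $(\mu_1,\mu_2)$ with no integral solutions; the remaining cases are handled by configuration arguments (Proposition \ref{3points}, Corollary \ref{b-3}, Lemma \ref{-4-3-2}).

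Measured against this, your proposal has genuine gaps. First, the ``structural reduction'' to $(-1)$-curves among the central fibre components is false: a $(-1)$-curve may meet both $F_1$ and $F_2$ positively, and indeed the $(-1)$-curves on the fake Godeaux of \cite{bp} and on the torsion-$4$ Godeaux of Section \ref{constructions} are exactly such bi-horizontal curves. You flag this at the very end, but the body of your argument rests on the reduction and you supply no method to exclude bi-horizontal $(-1)$-curves when $\gamma=0$. Second, the implication (minimal $\Rightarrow\gamma=0$) cannot come from nefness of $K_S$ together with your (correct) identity $6\gamma=-\sum_i(F_1^{(i)})^2-\sum_j(F_2^{(j)})^2+\sum_x\sum_a(b_a-3)$: nefness gives $-(F_1^{(i)})^2\geq 2-2g(F_1^{(i)})$, a \emph{lower} bound on the positive terms and hence on $\gamma$, so it pushes in the wrong direction, and combining it with $\gamma\geq 0$ still yields no upper bound. (An upper bound from minimality does exist, namely $2\gamma\leq 7-l$ from $1\leq K_S^2=8-2\gamma-l$, but it is not in your argument and does not by itself force $\gamma=0$.) Finally, the per-fibre positivity you yourself identify as ``the crux'' is unproved and cannot be a purely local statement, since $\gamma_x<0$ whenever $q+q'<n$ by Proposition \ref{gammasym}. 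As it stands the proposal is a programme rather than a proof, and closing it would in effect require redoing the finite enumeration on which the paper's proof actually rests.
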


In the sequel we shell give a proof of this conjecture in the special case $p_g=0$. In fact, we have
\begin{theo}\label{gamma0}
Let $S$ be a regular product-quotient surface of general type with $p_g=0$. Then 
$$\gamma(S)= 0 \iff S \ \rm{is} \ \rm{minimal}.$$
\end{theo}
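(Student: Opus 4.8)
The plan is to use the standard fact that a smooth projective surface of general type is minimal if and only if it contains no $(-1)$-curve, equivalently if and only if $K_S$ is nef. Throughout, $p_g=q=0$ forces $\chi=1$, so by Proposition \ref{forminf} we have $K_S^2=8-2\gamma-l$, and by Proposition \ref{gamma+pg} we already know $\gamma=\gamma(X)\in\NN$. Both implications therefore amount to matching the existence of a $(-1)$-curve with the strict positivity of $\gamma$.

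For the implication $\gamma=0\Rightarrow S$ minimal I would first invoke Proposition \ref{gamma+pg}: when $\gamma+p_g=0$ the Picard number is maximal, and since $p_g=q=0$ the whole of $H^2(S,\QQ)$ is of type $(1,1)$ and algebraic, spanned by the $l+2$ independent classes $F_1,F_2,A_1,\dots,A_l$. Hence any hypothetical $(-1)$-curve $E$ has a class
$$
E\equiv aF_1+bF_2+\sum_i c_iA_i .
$$
Using $F_i^2=0$, $F_1\cdot F_2>0$ and $F_i\cdot A_j=0$ (the $A_j$ lie in special fibres), nefness of the fibre classes gives $a,b\geq0$; intersecting with the $A_j$ and using that the exceptional lattice is negative definite, with inverse having non-positive entries, gives $c_i\leq0$ for all $i$ (here $E\neq A_j$ since $A_j^2\leq-2$). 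These sign conditions constrain $K_S\cdot E=a(2g_2-2)+b(2g_1-2)+\sum_i c_i(b_i-2)$, but do not by themselves exclude the value $-1$; the remaining work is geometric.

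The geometric input I would exploit is that $C_1\times C_2$ contains no rational curve, both projections to the $C_i$ (of genus $\geq2$) being necessarily constant on any $\PP^1$. Consequently the image $\sigma(E)\subset X$ of a $(-1)$-curve has a preimage under $\pi$ of positive genus, so $\sigma(E)$ must pass through singular points of $X$; after separating the case in which $E$ dominates both bases, I expect to reduce to the case that $E$ is a rational component $F_1^{(i)}$ (or $F_2^{(j)}$) of a special fibre with $(F_1^{(i)})^2=-1$. By Proposition \ref{PolizziInt} this self-intersection equals $-\sum_{x}\tfrac{q}{n}(x)=-1$, an identity on the types of the singularities lying on $\sigma(F_1^{(i)})$; combined with Remark \ref{ndividesmi} and the continued-fraction definition of $\gamma$, one then checks that a rational fibre component of self-intersection $-1$ occurs precisely when the associated local data force $\gamma\geq1$, using Proposition \ref{gammasym} and the fact that the contribution $\tfrac qn$ to $(F_1^{(i)})^2$ is paired with the contribution $\tfrac{q'}n$ to the dual fibre. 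The same analysis yields the reverse implication: if $\gamma>0$ one locates such a rational $(-1)$-component, so $S$ is not minimal.

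The main obstacle I anticipate is exactly this $(-1)$-curve analysis, in two parts. First, one must rule out horizontal $(-1)$-curves, i.e. curves dominating both bases $C_i/G\cong\PP^1$: these are multisections of two fibrations of genus $\geq2$, and although $K_S\cdot E=-1$ together with nefness of the relative canonical classes constrains them, excluding them (or showing they cannot be $(-1)$-curves) requires a separate argument exploiting the product structure and the degree-$|G|$ map $S\to\PP^1\times\PP^1$. Second, since $\gamma=\sum_x\gamma_x$ is a sum of local terms of varying sign, the delicate point is the bookkeeping that converts the single numerical identity $\sum_x\tfrac qn(x)=1$ on a rational $(-1)$-fibre into the global statement $\gamma\geq1$; I expect this to be where Proposition \ref{gammasym} and the $\tfrac qn\leftrightarrow\tfrac{q'}n$ pairing of Proposition \ref{PolizziInt} carry the real weight.
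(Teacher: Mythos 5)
Your proposal is a strategy sketch rather than a proof, and the two steps you yourself flag as ``obstacles'' are exactly the ones that cannot be carried out as described. First, the reduction of a hypothetical $(-1)$-curve to a rational fibre component $F_1^{(i)}$ with $(F_1^{(i)})^2=-1$ fails: $(-1)$-curves on product-quotient surfaces are in general \emph{horizontal}, i.e.\ they dominate both bases. The paper's own numerical Godeaux example with torsion $\ZZ_4$ (Section \ref{constructions}) has $\gamma=1$ and a $(-1)$-curve $E$ whose image in $X$ maps onto $\PP^1\times\PP^1$ as $z\mapsto(z^4,z^4)$ --- it is not a fibre component --- and likewise in the paper's treatment of the surfaces in Table \ref{K2<1} the hypothetical $(-1)$-curves have $\mu_1=EF_2>0$ and $\mu_2=EF_1>0$ (Proposition \ref{E2KE}). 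Second, the ``bookkeeping'' converting $\sum_x \frac qn(x)=1$ on one fibre into the global inequality $\gamma\geq 1$ has no local-to-global mechanism behind it: $\gamma=\sum_x\gamma_x$ ranges over \emph{all} singular points of $X$, the individual $\gamma_x$ can be negative (Proposition \ref{gammasym}), and the points on one fibre say nothing about the contributions of the others. The converse direction as you sketch it (``if $\gamma>0$ one locates such a rational $(-1)$-component'') is also not tenable, again because of the torsion-$4$ example, and because there are many surfaces with $\gamma>0$ that are not of general type at all (Tables \ref{gamma1}--\ref{gamma3}), so no purely numerical criterion of this kind can produce a $(-1)$-curve.

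For comparison, the paper's proof is openly non-conceptual (see the Remark preceding Theorem \ref{gamma0}): it is a finite enumeration. The implication ``minimal $\Rightarrow\gamma=0$'' follows from the classification in \cite{bp} of all product-quotient surfaces of general type with $p_g=0$ and $K^2>0$, all minimal members of which have $\gamma=0$. For ``$\gamma=0\Rightarrow$ minimal'' one runs the MAGMA script listing all surfaces with $p_g=\gamma=0$; those with $K^2_S>0$ are already known to be minimal, and the ten remaining cases with $K^2_S\leq 0$ (Table \ref{K2<1}) are excluded from being of general type one by one, using Proposition \ref{E2KE} (writing a putative $(-1)$-curve as $\frac{\mu_1}{|G|}F_1+\frac{\mu_2}{|G|}F_2-\sum a_iA_i$ and showing the resulting Diophantine systems for $(\mu_1,\mu_2)$ have no integral solutions) together with Propositions \ref{3points}, \ref{KC<0} and Lemma \ref{-4-3-2}. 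Your opening reduction (maximal Picard number, expressing $E$ in the basis $\{F_1,F_2,A_i\}$, and the sign constraints on the coefficients) is sound and is indeed the same starting point as Proposition \ref{E2KE}, but without the enumeration of baskets and the case-by-case arithmetic the argument does not close.
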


\begin{rem}
Unfortunately, we do not have a conceptional proof of the above theorem, which could shed some light on a possible connection between the number of exceptional cycles on a product-quotient surface and the invariant $\gamma$, or $\gamma+ p_g$. The proof is just a case by case inspection of the output of the MAGMA script listing all product-quotient surfaces with $p_g = \gamma = 0$.
\end{rem}
\begin{proof}
Running the MAGMA script "ExistingSurfaces(0,0,M)" with $M$ up to $36$, we only have to take care of the surfaces $S$ with $K_S^2 \leq 0$. In fact, if $K_S^2 >0$, it has already been proven in \cite{bp} (cf. also Theorem \ref{classiso} and the corresponding tables) that in these cases $\gamma=0$. 

Therefore the proof is finished once we show that the cases with $K_S \leq 0$ in the output of "ExistingSurfaces(0,0,M)" with $M$ up to $36$ are not of general type.

This will be taken care of in the remaining part of the section.
\end{proof}

First of all we list the output of the surfaces with $K_S^2 \leq 0$ in  table \ref{K2<1}.
\begin{table}
\caption{Product-quotient surfaces  with $\gamma = p_g=0$ not of general type}
\label{K2<1}
\renewcommand{\arraystretch}{1,3}
 \begin{tabular}{|c|c|c|c|c|c|c|c|}
\hline
&$K^2_S$&Sing X&$t_1$&$t_2$&$G$\\
\hline\hline
1)&   0&$\frac 16, \frac 56,2\times \frac 12$ &$2,4,6$&$2,4,6$& SmallGroup(192,955)  \\
   \hline
  2)& 0&$\frac 16, \frac 56,2\times \frac 12$ &$2,4,6$&$2,5,6$& SmallGroup(120,34)  \\
\hline
 3)&  0&$\frac 16, \frac 56,2\times \frac 12$ &$2,4,6$&$2,2,2,6$& SmallGroup(48,48)  \\
\hline
 4)&  -2&$2\times \frac 15,2\times \frac 45$ &$2,5,5$&$2,5,5$& SmallGroup(80,49)  \\
\hline
 5)&  0&$4\times \frac 25$ &$2,5,5$&$2,5,5$& SmallGroup(80,49)  \\
\hline
 6)&  0&$2\times \frac 14,2\times \frac 34$ &$2,4,5$&$3,4,4$& SmallGroup(120,34)  \\
\hline
 7)&  0&$2\times, \frac 14,2\times\frac 34$ &$2,2,2,4$&$2,2,2,4$& SmallGroup(16,11)  \\
\hline
 8)&  0&$2\times \frac 14,2\times \frac 34$ &$2,2,2,4$&$3,4,4$& SmallGroup(24,12)  \\
\hline
 9)&  0&$2\times \frac 14,2\times \frac 34$ &$3,4,4$&$3,4,4$& SmallGroup(36,9)  \\
\hline
 10)&  -1&$\frac 15, 2\times \frac 25, \frac 45$ &$2,5,5$&$3,3,5$& SmallGroup(60,5)  \\
\hline
\end{tabular}
\end{table}

We need the following:

\begin{prop}\label{E2KE}
Let $S$ be a product-quotient surface and let $A_1, \ldots A_l$ be the exceptional curves of $\sigma$ of respective selfintersection $b_i$. Assume that 
$$
E \sim \frac{\mu_1}{|G|}F_1 + \frac{\mu_2}{|G|}F_2 - \sum_{i=1}^l a_iA_i \in H^2(S,\QQ).
$$  
Then $\mu_i \in \NN$. Moreover, let $M$ be the intersection matrix of the basket (i.e., of the $A_i$'s), and set
$$b:=\begin{pmatrix}K_SA_1\\K_SA_2\\ \cdot \\ \cdot \\ \cdot \\ K_SA_l \end{pmatrix}=\begin{pmatrix} b_1-2\\b_2 -2\\ \cdot \\ \cdot \\ \cdot \\ b_l -2\end{pmatrix}, \  \  e:=\begin{pmatrix} E A_1\\E  A_2\\ \cdot \\ \cdot \\ \cdot \\ E  A_l \end{pmatrix}.$$
Then 
\begin{equation}\label{KE}
K_SE = \mu_1 \Theta_2 + \mu_2 \Theta_1 + e^T M^{-1}b;
\end{equation}
\begin{equation}\label{E2}
E^2 =  \frac{2\mu_1\mu_2}{|G|} + e^T M^{-1}e.
\end{equation}
\end{prop}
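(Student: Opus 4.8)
The plan is to reduce the whole statement to a short list of intersection numbers on $S$ followed by linear algebra with the (symmetric, negative-definite, hence invertible) matrix $M=(A_i\cdot A_j)$. First I would record the relevant intersection products. Since $F_1,F_2$ are general fibres of the two fibrations they satisfy $F_1^2=F_2^2=0$, and being general they avoid the finitely many singular points of $X$, hence $F_i\cdot A_j=0$ for all $i,j$. The only product that is not formal is $F_1\cdot F_2$: it equals the degree of $(f_1,f_2)\colon S\to (C_1/G)\times(C_2/G)$, and since $\sigma$ is birational while $\pi\colon C_1\times C_2\to X$ has degree $|G|$ and $C_1\times C_2\to (C_1/G)\times(C_2/G)$ has degree $|G|^2$, this degree is $|G|$; thus $F_1\cdot F_2=|G|$. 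Adjunction on the fibre $F_1\cong C_2$ gives $K_S\cdot F_1=2g_2-2=|G|\Theta_2$ by the Hurwitz formula (\ref{Hurwitz}), and symmetrically $K_S\cdot F_2=|G|\Theta_1$; adjunction on the smooth rational exceptional curves gives $K_S\cdot A_i=-2-A_i^2=b_i-2$, i.e. exactly the vector $b$.

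Next I would prove $\mu_i\in\NN$ and extract the linear relation between $a$ and $e$. Intersecting the defining equivalence $E\sim\frac{\mu_1}{|G|}F_1+\frac{\mu_2}{|G|}F_2-\sum a_iA_i$ with $F_2$ and using $F_1\cdot F_2=|G|$, $F_2^2=0$, $F_2\cdot A_j=0$ yields $E\cdot F_2=\mu_1$, and symmetrically $E\cdot F_1=\mu_2$. These numbers are integers because $E$ and the $F_i$ are integral classes, and they are non-negative because the fibres $F_i$ are nef and $E$ is effective; hence $\mu_i\in\NN$. Intersecting instead with each $A_i$ and again using $F_j\cdot A_i=0$ gives $e_i=E\cdot A_i=-\sum_j a_j\,(A_i\cdot A_j)=-(Ma)_i$, that is $e=-Ma$, so that $a=-M^{-1}e$ since $M$ is negative definite and therefore invertible.

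Finally I would expand $K_S\cdot E$ and $E^2$ by linearity and substitute the intersection numbers above. This produces $K_S\cdot E=\mu_1\Theta_2+\mu_2\Theta_1-a^{T}b$ and $E^2=\frac{2\mu_1\mu_2}{|G|}+a^{T}Ma$. Plugging in $a=-M^{-1}e$ and using that $M$ (hence $M^{-1}$) is symmetric turns $-a^{T}b$ into $e^{T}M^{-1}b$ and $a^{T}Ma=e^{T}(M^{-1})^{T}MM^{-1}e$ into $e^{T}M^{-1}e$, which are precisely (\ref{KE}) and (\ref{E2}).

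The only genuinely geometric step is the identity $F_1\cdot F_2=|G|$, obtained from the degree computation through the quotient map, together with the observation (Remark \ref{ndividesmi}) that the singular points lie on special fibres, which is what guarantees $F_i\cdot A_j=0$; one also needs $E$ effective (equivalently, the nefness of the fibres) to upgrade $\mu_i\in\ZZ$ to $\mu_i\in\NN$. Everything after that is bookkeeping exploiting $M=M^{T}$ and its invertibility, so I expect no real obstacle beyond keeping the two indices $1,2$ consistently paired with $\Theta_2,\Theta_1$.
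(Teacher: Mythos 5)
Your proposal is correct and follows essentially the same route as the paper: the same intersection numbers ($F_1F_2=|G|$, $F_iA_j=0$, $K_SF_1=|G|\Theta_2$, $\mu_1=EF_2$), the relation $e=-Ma$, and symmetry and invertibility of $M$ to substitute. The only cosmetic difference is that the paper writes $K_S\equiv\Theta_1F_1+\Theta_2F_2-\mathfrak{A}$ and introduces the coefficient vector $\alpha$ with $M\alpha=-b$, whereas you pair directly with $K_SA_i=b_i-2$; both yield $e^TM^{-1}b$.
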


\begin{proof} Note that $\mu_1=EF_2$; in particular $\mu_1 \in \NN$. Similarly $\mu_2 \in \NN$.

Since
$$
\sigma^* K_X \equiv \frac1{|G|} \left( (2g_1-2)F_1 + (2g_2-2)F_2\right) \equiv \Theta_1F_1 + \Theta_2 F_2,
$$
then $K_S \equiv \Theta_1F_1 + \Theta_2 F_2 - \mathfrak{A}$, where $\mathfrak{A}$ is of the form $\sum_{i=1}^l \alpha_iA_i$ for some $\alpha_i \in \QQ$. Set
$$
a:=\begin{pmatrix} a_1\\a_2 \\ \cdot \\ \cdot \\ \cdot \\ a_l \end{pmatrix},\
\alpha:=\begin{pmatrix} \alpha_1\\ \alpha_2 \\ \cdot \\ \cdot \\ \cdot \\ \alpha_l \end{pmatrix}.
$$
Since $\forall i$ $A_iF_1 = A_iF_2 =0$, then  $\mathfrak{A}A_i = -K_S A_i=-(b_i-2)$; in other words $M \alpha=-b$. Similarly $Ma=-e$. Since $M$ is invertible, we can also write $b=-M^{-1}\alpha$, $e=-M^{-1}a$.

Then $K_SE = \mu_1 \Theta_2 + \mu_2 \Theta_1 +\sum a_iA_i \mathfrak{A} = \mu_1 \Theta_2 + \mu_2 \Theta_1 +a^TM\alpha= \mu_1 \Theta_2 + \mu_2 \Theta_1 +e^TM^{-1}b$. Similarly 
$E^2 =  \frac{2\mu_1\mu_2}{|G|} - \left( \sum_{i=1}^l a_iA_i \right)^2=  \frac{2\mu_1\mu_2}{|G|} +a^TMa=  \frac{2\mu_1\mu_2}{|G|} +e^TM^{-1}e.$
\end{proof}

\begin{rem}
By the proof of Proposition \ref{gamma+pg}, if $p_g+\gamma=0$, the set $\{A_i,F_j\}$ is a basis of $H^2(X,\QQ)$, so the assumption of Proposition \ref{E2KE} is automatically verified by every curve $E$.
\end{rem}

To show that the surfaces in table \ref{K2<1} are not of general type we argue by cotradiction, assuming that they are of general type, and showing that the minimal model has 
$K_{\overline{S}}^2<0$. To do that, we look for rational curves $E$ with selfintersection $-1$, and study their image $\sigma(E)$ in the quotient model $X$. 

We recall that
\begin{prop}\label{3points}
Let $\PP^1 \rightarrow X$ be a rational curve in $X$, so a map which is generically injective. Then the preimage of the singular locus of $X$ has cardinality at least three. 
\end{prop}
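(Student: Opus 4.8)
The plan is to argue by contradiction: I assume that the generically injective rational curve $f\colon \PP^1 \to X$ meets the singular locus in at most two points, and I derive that $f$ must be constant, contradicting generic injectivity. The whole argument rests on passing to the étale cover of the smooth locus provided by the quotient map, and then on the hyperbolicity of $C_1$ and $C_2$.

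First I would set up this covering. Write $\pi \colon C_1 \times C_2 \to X$ for the quotient map. Since $G$ acts faithfully on each factor, every nontrivial $g \in G$ has finite fixed locus on each $C_i$, so the set $F \subseteq C_1 \times C_2$ of points with nontrivial stabilizer is finite. As all singularities of $X$ are cyclic quotient singularities of type $\frac{q}{n}$ with $n>1$, one has $\pi(F) = \Sing X$ and $\pi^{-1}(\Sing X) = F$; hence, writing $X^\circ = X \setminus \Sing X$, the restriction
$$
\pi \colon (C_1 \times C_2) \setminus F \longrightarrow X^\circ
$$
is an étale Galois covering with group $G$, the action being free away from $F$.

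Next, set $U := f^{-1}(X^\circ) = \PP^1 \setminus f^{-1}(\Sing X)$ and assume for contradiction that $\# f^{-1}(\Sing X) \leq 2$, so that $U$ is $\PP^1$ minus at most two points. I would pull back the covering above along $f|_U \colon U \to X^\circ$, choose a connected component $\tilde U \to U$ of the resulting étale cover with covering map $p \colon \tilde U \to U$ (surjective), and take the induced lift $\tilde f \colon \tilde U \to (C_1 \times C_2) \setminus F$ satisfying $\pi \circ \tilde f = f|_U \circ p$. The key observation is that $\tilde U$ is always a rational curve: if at most one point is removed, then $U$ is $\PP^1$ or $\CC$, hence simply connected and $\tilde U \cong U$; if exactly two points are removed, then $U \cong \CC^*$ has $\pi_1(U) \cong \ZZ$, and every connected étale cover of $\CC^*$ is again isomorphic to $\CC^*$ (of the form $z \mapsto z^d$). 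In either case the smooth projective completion of $\tilde U$ is $\PP^1$.

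Finally, composing $\tilde f$ with the two projections yields morphisms $\tilde U \to C_i$ which, the $C_i$ being projective, extend to morphisms $\PP^1 \to C_i$ from the completion; since $g(C_i) \geq 2$, every such morphism is constant. Hence $\tilde f$ is constant, and from $\pi \circ \tilde f = f|_U \circ p$ together with the surjectivity of $p$ it follows that $f|_U$, and therefore $f$, is constant, contradicting generic injectivity. I expect the only delicate point to be the case $U \cong \CC^*$, where $U$ fails to be simply connected; there the argument genuinely uses both that finite covers of $\CC^*$ remain rational (so that their completion is still $\PP^1$) and the hyperbolicity of the target curves $C_i$, whereas the remaining cases reduce at once to the nonexistence of nonconstant maps $\PP^1 \to C_i$.
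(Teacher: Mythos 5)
Your proof is correct, and it is essentially the argument the paper relies on: the paper does not prove the statement itself but cites \cite[Proposition 4.7]{bp}, where the same strategy is used, namely lifting the rational curve along the cover $C_1\times C_2\setminus F\to X^{\circ}$, which is \'etale since $G$ acts freely off the finite set $F$ lying over $\Sing X$, and observing that a finite cover of $\PP^1$ minus at most two points is again rational while $g(C_i)\geq 2$ forbids nonconstant maps from $\PP^1$. The one point worth making explicit is why $\pi(F)=\Sing X$: a nontrivial element stabilizing $(x,y)$ acts by nontrivial roots of unity on both tangent directions (faithfulness on each factor plus linearization at a fixed point), so it is not a quasi-reflection and the quotient is genuinely singular there.
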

\begin{proof}
This has been shown in the proof of \cite[Proposition 4.7]{bp}
\end{proof}

\begin{prop}\label{KC<0}
Every irreducible curve $C$ on a smooth surface $S$ of general type with $K_SC\leq 0$ is smooth and rational. 
\end{prop}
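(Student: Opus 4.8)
The plan is to reduce everything to showing that the arithmetic genus $p_a(C)$ vanishes, since an irreducible curve with $p_a(C)=0$ is automatically smooth and rational: writing $p_a(C)$ as the sum of the geometric genus of $C$ and the $\delta$-invariant measuring its singularities, both nonnegative, the vanishing of $p_a(C)$ forces both to vanish. The only tools I would use are the adjunction formula
$$
2p_a(C)-2 = C^2 + K_S\cdot C,
$$
together with the fact that, since $S$ is of general type, $K_S$ is big. I stress that I do \emph{not} assume $S$ minimal, so $K_S$ is in general neither nef nor of positive self-intersection (indeed $K_S^2$ may be negative here, these being exactly the surfaces of interest in this paper); bigness, via Kodaira's lemma, is the substitute I would rely on.

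First I would argue by contradiction, supposing $p_a(C)\geq 1$. Feeding this into adjunction and using the hypothesis $K_S\cdot C\leq 0$ gives
$$
C^2 = 2p_a(C)-2-K_S\cdot C \geq 0.
$$
So the heart of the matter becomes: an irreducible curve with $C^2\geq 0$ on a surface of general type must satisfy $K_S\cdot C>0$, which contradicts the hypothesis and forces $p_a(C)=0$.

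For this last step I would invoke Kodaira's lemma: since $K_S$ is big there is an integer $m>0$ and a numerical decomposition $mK_S \equiv A+E$ with $A$ ample and $E\geq 0$ effective. Intersecting with $C$ yields $mK_S\cdot C = A\cdot C + E\cdot C$, where $A\cdot C>0$ because $A$ is ample and $C$ a curve. It then remains to check that $E\cdot C\geq 0$: if $C$ is not a component of $E$ this is immediate, while if $E=cC+E'$ with $c\geq 1$ and $E'\geq 0$ not containing $C$, then $E\cdot C = cC^2+E'\cdot C\geq 0$, precisely because $C^2\geq 0$ and $E'\cdot C\geq 0$. Hence $mK_S\cdot C>0$, i.e. $K_S\cdot C>0$, the desired contradiction. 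Therefore $p_a(C)=0$ and $C$ is smooth and rational.

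The step I expect to be the only delicate one is recognizing that minimality of $S$ cannot be assumed, so that the naive argument ``$K_S$ nef $\Rightarrow K_S\cdot C\geq 0$'' is unavailable; the correct point is that bigness of $K_S$ together with the sign condition $C^2\geq 0$ suffices, the latter being guaranteed exactly by the contradiction hypothesis $p_a(C)\geq 1$ combined with $K_S\cdot C\leq 0$. Everything else is a routine application of adjunction and Kodaira's lemma.
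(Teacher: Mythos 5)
Your proof is correct. The paper itself does not give an argument for this proposition: it simply cites \cite[Remark 4.3]{bp}, so there is no in-text proof to compare against. Your self-contained argument is the standard one and it is sound: adjunction plus the contradiction hypothesis $p_a(C)\geq 1$ yields $C^2\geq 0$; Kodaira's lemma writes $mK_S\equiv A+E$ with $A$ ample and $E$ effective; and the only point requiring care, namely $E\cdot C\geq 0$ when $C$ is a component of $E$, is exactly where you use $C^2\geq 0$. You also correctly identify the trap of assuming minimality (which would make the statement trivial via nefness of $K_S$ but is unavailable here, since the non-minimal case is the one the paper actually needs), and the final reduction from $p_a(C)=0$ to smoothness and rationality via $p_a=g(\tilde C)+\delta$ is standard. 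The only cosmetic remark is that Kodaira's lemma gives the decomposition for $m$ sufficiently large and divisible, not for every $m>0$; this does not affect the argument, since a single such $m$ suffices to conclude $K_S\cdot C>0$.
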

\begin{proof}
See \cite [Remark 4.3]{bp}
\end{proof}

\begin{cor}\label{b-3}
If $S$ is a surface of general type, $E$ a $(-1)-$curve on $S$ and $C$ a curve with $C^2=-b$. Then
$CE \leq\max (1, b-3)$. 
\end{cor}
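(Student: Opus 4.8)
The plan is to reduce the statement to Proposition~\ref{KC<0} by contracting the $(-1)$-curve $E$ and analyzing the singular point this creates on the image of $C$. First I would dispose of the trivial possibilities. If $C=E$, then $C^2=E^2=-1$ forces $b=1$ and $CE=E^2=-1\le 1=\max(1,b-3)$. So I may assume $C\neq E$, and since $C,E$ are then distinct irreducible curves we have $m:=CE\ge 0$. If $m\le 1$ there is nothing to prove. Hence the entire content is the case $m\ge 2$, where I will in fact establish the sharper inequality $CE\le b-3$; note that when $b-3\le 1$ this makes the case $m\ge 2$ empty, leaving $CE\le 1$, so the stated bound $\max(1,b-3)$ follows uniformly.

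Next I would blow down $E$. Let $p\colon S\rightarrow S'$ be the contraction of the $(-1)$-curve $E$ to a point $q$; then $S'$ is again a smooth surface of general type (Kodaira dimension being a birational invariant of smooth projective surfaces), and $\bar C:=p(C)$ is an irreducible curve whose strict transform under $p$ is $C$. The standard relation for a point blow-up, $p^*\bar C=C+mE$, gives $\operatorname{mult}_q(\bar C)=C\cdot E=m$. Therefore, for $m\ge 2$ the curve $\bar C$ is singular at $q$, and in particular it is not smooth rational. Applying Proposition~\ref{KC<0} on $S'$, whose hypotheses transfer verbatim, we conclude $K_{S'}\cdot\bar C>0$, i.e. $K_{S'}\cdot\bar C\ge 1$.

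It then remains to translate this back to $S$. From $K_S=p^*K_{S'}+E$ and the projection formula, $K_S\cdot C=p^*K_{S'}\cdot C+E\cdot C=K_{S'}\cdot\bar C+m$, so that $K_{S'}\cdot\bar C=K_S\cdot C-m$. On the other hand, adjunction on $S$ yields $K_S\cdot C=2p_a(C)-2-C^2=2p_a(C)+b-2$. Combining the two displays gives $K_{S'}\cdot\bar C=2p_a(C)+b-2-m\ge 1$, whence $m\le 2p_a(C)+b-3$. For the rational curves to which the corollary is applied (in particular the exceptional curves $A_i$, where $p_a=0$) this reads $CE\le b-3$, and together with the already-settled case $m\le 1$ we obtain $CE\le\max(1,b-3)$.

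The one genuinely delicate point, and the place where Proposition~\ref{KC<0} does the real work, is excluding that $\bar C$ could be smooth rational once $m\ge 2$: everything hinges on the observation that blowing down $E$ forces an $m$-fold point on $\bar C$, so that the negativity condition $K_{S'}\cdot\bar C\le 0$ of Proposition~\ref{KC<0} is impossible. I would also flag that the argument literally produces $CE\le 2p_a(C)+b-3$, so the clean bound $b-3$ is the specialization to rational $C$; since the curves $C$ occurring in the sequel are smooth rational, this is exactly the form needed. Observe finally that the ``$3$'' in the bound is precisely $2+1$, the $2$ coming from adjunction on $S$ and the $1$ from the strict positivity $K_{S'}\cdot\bar C\ge 1$ granted by general type.
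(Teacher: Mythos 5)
Your proof is correct and follows essentially the same route as the paper's one-line argument: contract $E$ and observe that if $CE\geq 2$ and $CE\geq b-2$ the image $\bar C$ would be a singular (hence not smooth rational) curve with $K_{S'}\cdot\bar C\leq 0$, contradicting Proposition~\ref{KC<0}. Your observation that the clean bound $b-3$ requires $p_a(C)=0$ correctly identifies an implicit hypothesis of the corollary as stated; in all applications $C$ is one of the smooth rational curves $A_i$, so this causes no problem.
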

\begin{proof}
Else, contracting $E$, we obtain a surface of general type with a curve, the image of $C$, contradicting Proposition \ref{KC<0}.
\end{proof}
We will also need

\begin{lemma}\label{-4-3-2}
Let $S$ be a product-quotient surface of general type. Suppose that the exceptional locus of $\sigma$ consists of
\begin{itemize}
\item[i)] curves of self intersection (-3) and (-2), or
\item[ii)]at most two smooth rational curves of self-intersection (-3) or (-4), and (-2)-curves.
\end{itemize}
Then $S$ is minimal.
\end{lemma}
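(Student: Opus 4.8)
The plan is to show that a product-quotient surface $S$ of general type satisfying the hypotheses of Lemma \ref{-4-3-2} cannot carry any $(-1)$-curve, for then $S$ is minimal. I argue by contradiction: suppose $E$ is a $(-1)$-curve on $S$. Since the fibres $F_1, F_2$ of the two fibrations are nef with $F_iE \geq 0$, and since by the remark following Proposition \ref{E2KE} the classes $\{A_i, F_j\}$ form a basis of $H^2(X,\QQ)$ in the relevant situation, I first want to locate $E$ relative to the exceptional locus of $\sigma$. The key tool is Corollary \ref{b-3}: for an exceptional curve $A_i$ with $A_i^2 = -b_i$ one has $EA_i \leq \max(1, b_i - 3)$. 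Under hypothesis (i) every $A_i$ has $b_i \in \{2,3\}$, so $\max(1,b_i-3) = 1$, giving $EA_i \leq 1$ for all $i$; under hypothesis (ii) the same bound $EA_i \leq 1$ holds for the $(-2)$- and $(-3)$-curves, while for the at most two $(-4)$-curves one only gets $EA_i \leq 1$ as well since $b_i - 3 = 1$. So in \emph{all} cases the crucial numerical input is the uniform bound $EA_i \leq 1$.

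Next I would translate the existence of $E$ into arithmetic via Proposition \ref{E2KE}. Since $E$ is a $(-1)$-curve, $K_SE = -1$ and $E^2 = -1$. Writing $E \sim \frac{\mu_1}{|G|}F_1 + \frac{\mu_2}{|G|}F_2 - \sum_i a_i A_i$ with $\mu_1 = EF_2 \in \NN$, $\mu_2 = EF_1 \in \NN$, the formulas \eqref{KE} and \eqref{E2} read
$$
-1 = \mu_1\Theta_2 + \mu_2\Theta_1 + e^T M^{-1} b, \qquad -1 = \frac{2\mu_1\mu_2}{|G|} + e^T M^{-1} e,
$$
where $e$ is the vector with entries $EA_i \in \{0,1\}$ (by the bound just established) and $b$ has entries $b_i - 2 \in \{0,1,2\}$. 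Because $\Theta_1,\Theta_2 > 0$ and $\mu_1,\mu_2 \geq 0$, the first equation forces the intersection-matrix term $e^T M^{-1} b$ to be strongly negative, and $E \not\equiv 0$ forces $(\mu_1,\mu_2) \neq (0,0)$, so in fact $\mu_1\mu_2 \geq 0$ and the second equation forces $e^T M^{-1} e \leq -1$. The plan is to show these two constraints are incompatible once one knows the shape of $M$: since the basket decomposes as a disjoint union of Hirzebruch--Jung strings and $e$ is supported with entries $0$ or $1$, the quadratic form $e^T M^{-1} e$ and the mixed term $e^T M^{-1} b$ can be estimated string-by-string, using that $-M^{-1}$ is a positive matrix with controlled entries for $(-2)$-strings (and for the short $(-3)$/$(-4)$ pieces allowed in (ii)).

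The main obstacle is the last step: controlling $e^T M^{-1} e$ and $e^T M^{-1} b$ well enough to derive the contradiction when $e \neq 0$, and separately handling the case $e = 0$. If $e = 0$ then $E$ meets no exceptional curve, so $\sigma(E)$ is a genuine rational curve in $X$ with $\sigma(E)^2 = E^2 = -1 < 0$ meeting $\Sing X$ in at most the points dictated by $e=0$ (namely none), contradicting Proposition \ref{3points}, which requires the preimage of the singular locus on any rational curve to have cardinality at least three. So $e = 0$ is excluded immediately. For $e \neq 0$, the heart of the argument is that for a pure $(-2)$-string of length $n$ the inverse intersection matrix has entries $(M^{-1})_{jk} = -\frac{\min(j,k)(n+1-\max(j,k))}{n+1}$, so $e^T M^{-1} e$ is a sum of such negative contributions bounded below in absolute value by a quantity that, combined with $K_SE = -1$ and the nonnegativity of $\mu_1\Theta_2 + \mu_2\Theta_1$, leaves only finitely many numerical possibilities; these I would rule out by the same type of direct check as in the tables, invoking that each $(-3)$- or $(-4)$-string contributes a string of length one whose inverse is scalar. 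I expect the bookkeeping over the at most two longer strings in case (ii) to be the genuinely delicate part, but the uniform bound $EA_i \leq 1$ from Corollary \ref{b-3} keeps the number of cases small enough to be finite and checkable.
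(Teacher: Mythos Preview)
Your approach differs substantially from the paper's and, as stated, has real gaps.

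First, you invoke the remark after Proposition~\ref{E2KE} to write $E$ in the span of $\{F_1,F_2,A_i\}$, but that remark requires $p_g+\gamma=0$, which the lemma does \emph{not} assume. Without it, $E$ may have a component orthogonal to this span, and the equality in~\eqref{E2} becomes only an inequality.

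Second, and more seriously, even granting the decomposition, your plan to reduce to ``finitely many numerical possibilities'' via~\eqref{KE} and~\eqref{E2} cannot work as stated: the parameters $\Theta_1,\Theta_2,|G|$ are not fixed by the hypotheses of the lemma, so the system
\[
-1=\mu_1\Theta_2+\mu_2\Theta_1+e^TM^{-1}b,\qquad -1=\tfrac{2\mu_1\mu_2}{|G|}+e^TM^{-1}e
\]
involves three free positive parameters in addition to $\mu_1,\mu_2,e$. There is no finite case check here.

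Third, you are missing the one geometric constraint that actually drives the argument: a $(-1)$-curve $E$ on a surface of general type cannot meet \emph{two distinct} $(-2)$-curves, since contracting $E$ would turn their images into a pair of intersecting $(-1)$-curves, impossible on a surface of general type. Your bound $EA_i\le 1$ from Corollary~\ref{b-3} is correct but much weaker: it does not prevent $E$ from meeting many $(-2)$-curves once each.

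The paper's proof is purely geometric and uses no arithmetic from Proposition~\ref{E2KE}. Case (i) is \cite[Corollary~4.8]{frapi}. For (ii) one combines the ``at most one $(-2)$-curve'' observation above with Proposition~\ref{3points} (so $E$ meets at least three of the $A_i$) and Corollary~\ref{b-3} (so each $EA_i\le 1$). Since at most one of these three can be a $(-2)$-curve, the other two must be the (at most two) curves $E_1,E_2$ of self-intersection $-3$ or $-4$; hence $EE_1=EE_2=1$ and $E$ meets exactly one $(-2)$-curve, transversally. Contract $E$, then contract the image of that $(-2)$-curve (now a $(-1)$-curve): the images of $E_1,E_2$ become rational curves of self-intersection $-1$ or $-2$ meeting each other with multiplicity at least $2$, which cannot happen on a surface of general type.
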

\begin{proof}
i) This is \cite[Corollary 4.8]{frapi}.

ii) Assume that $S$ contains a (-1)-curve $E$. Note that $E$ cannot intersect two different (-2)-curves or, contracting it, we would get two (-1)-curves intersecting transversally, impossible on a surface of general type. Then by Proposition \ref{3points} and Corollary \ref{b-3} the exceptional locus contains two curves of self-intersection (-3) or (-4), say $E_1$ and $E_2$, $EE_1=EE_2=1$ and moreover $E$ intersects exactly one (-2)-curve, transversally. After contracting $E$, then the image of the (-2)-curve we get two rational curves of  self intersections (-1) or (-2), intersecting each other with multiplicity bigger than one, which is impossible on a surface of general type.
\end{proof}

\begin{rem}\label{singular}

Observe that if we arrive, after contracting one or more exceptional curves, to a configuration as in the previous lemma with maybe singular (-4) resp- (-3)-curves, the same argument applies, showing that on a surface of general type there cannot be more (-1)-curves.
\end{rem}

We can now prove that all surfaces in table \ref{K2<1} are not of general type.

\begin{lemma}
The product-quotient surfaces 1), 2), 3) in table \ref{K2<1} are not of general type.
\end{lemma}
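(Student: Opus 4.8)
The plan is to argue by contradiction, exploiting that surfaces 1), 2), 3) share the same exceptional configuration. Over the point $\frac16$ lies a single $(-6)$-curve $C$, over $\frac56$ a chain of five $(-2)$-curves, and over each node a further $(-2)$-curve; these eight curves $A_1,\dots,A_8$ are mutually disjoint except for the consecutive incidences inside the chain, and $l=8$, $\gamma=0$, so that $K_S^2=8\chi-2\gamma-l=0$. Suppose $S$ were of general type. A minimal surface of general type satisfies $K^2\geq 1$, so $K_S^2=0$ forces $S$ to be non-minimal, and hence to carry a $(-1)$-curve $E$. The whole point will be to show that no such $E$ can exist.

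Since $p_g+\gamma=0$, the remark following Proposition \ref{E2KE} guarantees that $E$ satisfies the hypotheses of that proposition, so I may write $\mu_1=EF_2$, $\mu_2=EF_1\in\NN$, $e=(EA_i)_i\in\NN^8$, and use the two identities $K_SE=\mu_1\Theta_2+\mu_2\Theta_1+e^TM^{-1}b=-1$ and $E^2=\frac{2\mu_1\mu_2}{|G|}+e^TM^{-1}e=-1$. The intersection matrix $M$ is block diagonal, the vector $b$ of the $K_SA_i$ vanishes on every $(-2)$-curve and equals $4$ on $C$, and $(M^{-1})_{CC}=-\frac16$; hence $e^TM^{-1}b=-\frac23\,EC$. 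Because $\mu_i,\Theta_i\geq 0$, the first identity forces $EC\geq 2$, while Corollary \ref{b-3} gives $EC\leq\max(1,6-3)=3$ and bounds $EA_j\leq 1$ on each $(-2)$-curve. Thus $EC\in\{2,3\}$, and $E$ necessarily meets the $(-6)$-curve.

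The contradiction then comes from the self-intersection identity. As $M^{-1}$ is negative definite, the contributions of the $(-2)$-curves are nonpositive, so $e^TM^{-1}e\leq(M^{-1})_{CC}(EC)^2=-\frac16(EC)^2$, whence $E^2=-1$ forces $\mu_1\mu_2\geq\frac{|G|\,((EC)^2-6)}{12}$. For $EC=3$ this already makes $\mu_1\mu_2$ too large to be compatible with the value of $\mu_1+\mu_2$ (resp. the linear relation between $\mu_1$ and $\mu_2$) imposed by $K_SE=-1$; for $EC=2$ the two identities instead leave only $\mu_1\mu_2=0$, and then $E^2=-1$ would require the discrete contribution of the $(-2)$-curves met by $E$ (whose smallest positive value is $\frac12$) to land in a narrow forbidden window, which it never does. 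Carrying out this finite check group by group — $|G|=192$ with $\Theta_1=\Theta_2=\frac1{12}$ for 1), $|G|=120$ with $\Theta_1=\frac1{12},\Theta_2=\frac2{15}$ for 2), and $|G|=48$ with $\Theta_1=\frac1{12},\Theta_2=\frac13$ for 3) — shows in every case that $K_SE=E^2=-1$ is unsolvable with $\mu_i\in\NN$ and $e$ in the admissible range. Hence no $(-1)$-curve exists, contradicting non-minimality, and none of 1), 2), 3) is of general type.

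The main obstacle is precisely the $(-6)$-curve: its presence puts the configuration outside the scope of Lemma \ref{-4-3-2} and Remark \ref{singular}, so minimality cannot simply be read off from the list of self-intersections and must instead be extracted from the numerical constraints of Proposition \ref{E2KE}. Consequently the heart of the argument is the explicit, group-by-group verification that the resulting Diophantine system has no admissible solution; this is the step I expect to be the most laborious, and the one where the three surfaces — sharing a basket but not a group — must be treated separately (and which the authors presumably delegate to the \textsc{Magma} output).
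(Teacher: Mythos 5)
Your argument is correct and rests on the same engine as the paper's: assume a $(-1)$-curve $E$ exists, feed it into Proposition \ref{E2KE}, and derive an unsolvable system for $(\mu_1,\mu_2)$ in each of the three groups. Where you genuinely diverge is in how the vector $e$ is pinned down: the paper uses Proposition \ref{3points} together with contraction arguments (as in Corollary \ref{b-3} and Proposition \ref{KC<0}) to reduce $e$ to exactly three explicit possibilities --- $e=(3,0,\ldots,0)$, or $EA_1=2$ plus one nodal $(-2)$-curve met once --- and then solves the resulting exact quadratic systems; you instead extract $EA_1\geq 2$ purely numerically from the linear identity (positivity of $\mu_1\Theta_2+\mu_2\Theta_1$) and absorb every possible $(-2)$-contribution into the negative-definiteness bound $e^TM^{-1}e\leq -\frac16(EA_1)^2$, trading the geometric case analysis for an AM--GM estimate when $EA_1=3$ and a ``forbidden window'' estimate when $EA_1=2$. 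Both routes close; yours needs no appeal to Proposition \ref{3points} at all. One small inaccuracy: for surface 1) with $EA_1=2$ the two identities do \emph{not} ``leave only $\mu_1\mu_2=0$'' --- the linear equation there is $\mu_1+\mu_2=4$, which also permits $\mu_1\mu_2\in\{3,4\}$ (it is only for surfaces 2) and 3) that the linear equation forces the product to vanish). Fortunately your window argument already covers this: the required $(-2)$-contribution is $\frac13+\frac{\mu_1\mu_2}{96}\in\left[\frac13,\frac38\right]$, strictly between $0$ and $\frac12$, whereas its attainable nonzero values start at $\frac12$ (one node met transversally) and $\frac56$ (an end curve of the chain). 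So the conclusion stands; only that one sentence should be reworded.
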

\begin{proof}
In this case the basket is $\{\frac16, \frac 56,2\times \frac 12 \}$. We have $8$ curves $A_i$, which we order in a natural way, so $A_1^2=-6$, and $A_7$ and $A_8$ are the curves produced by the nodes.

Assume that there exist a (-1)-curve $E$. 
 In the notation of Proposition \ref{E2KE}
$$b = \begin{pmatrix} 4\\0\\ \cdot \\ \cdot \\ \cdot \\ 0\end{pmatrix}.$$
For what concerns $e$ we notice again that $E$ cannot intersect two different (-2)-curves, so by Proposition \ref{3points} and corollary \ref{b-3}, $BA_1 \geq 2$. But then $E$ cannot intersect $A_2, \ldots, A_6$ since else, after contracting it we could contract enough other curves intersecting the image of $A_1$ to contradict proposition \ref{KC<0}. The possibilities left for $e$ are
$$
a) \begin{pmatrix} 3\\  0\\ \cdot \\ \cdot \\ \cdot \\ 0\end{pmatrix}, \ \ b) \begin{pmatrix} 2\\0\\ \cdot \\ \cdot  \\0 \\ 1\end{pmatrix}, \ \ c) \begin{pmatrix} 2\\0\\ \cdot \\ \cdot \\ 1 \\ 0 \end{pmatrix}.
$$
Note that the second and third case are symmetric, one obtained from the other exchanging the two nodes. Therefore it suffices to treat only the cases a) and b).

Then applying proposition \ref{E2KE} and substituting $EK_S = E^2=-1$ in equations \ref{E2}, \ref{KE} and solving respect to the variables $\mu_i$
we get in each of the three cases:
\begin{enumerate}
\item Here $\Theta_1 = \Theta_2 = \frac{1}{12}$ and we get:
\subitem a) $\mu_1 +\mu_2 = 12$, $\mu_1\mu_2 = 48$;
\subitem b) $\mu_1 +\mu_2 = 4$, $\mu_1\mu_2 = 16$.

\item Here $\Theta_1 = \frac{1}{12}$, $\Theta_2 = \frac{2}{15}$                                                                                                                                                                                                                   and we get:
\subitem a) $8\mu_1 +5\mu_2 = 60$, $\mu_1\mu_2 = 30$;
\subitem b) $8\mu_1 +5\mu_2 = 20$, $\mu_1\mu_2 = 10$.

\item Here $\Theta_1 = \frac{1}{12}$, $\Theta_2 = \frac 13$                                                                                                                                                                                                                   and we get:
\subitem a) $4\mu_1 +\mu_2 = 12$, $\mu_1\mu_2 = 12$;
\subitem b) $4\mu_1 +\mu_2 = 4$, $\mu_1\mu_2 = 4$.
\end{enumerate}
All systems have no integral solutions, a contradiction.
\end{proof}

\begin{lemma}
The product-quotient surface 4) in table \ref{K2<1} is not of general type.
\end{lemma}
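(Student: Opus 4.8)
The plan is to argue by contradiction: assuming $S$ is of general type, I will show that $S$ carries no $(-1)$-curve, so that $S$ is already minimal; since $K_S^2=-2<1$, this contradicts the fact that a minimal surface of general type has positive $K^2$.

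First I would record the geometry of the exceptional locus. Resolving the basket $\{2\times\frac15,\,2\times\frac45\}$ produces two curves $A_1,A_2$ of self-intersection $-5$ (from the two $\frac15$-points) and two Hirzebruch--Jung chains of four $(-2)$-curves each (from the two $\frac45$-points, as $\frac54=[2,2,2,2]$), so $l=10$. By Proposition \ref{PolizziInt} the only way singular points can group on a central fibre $\sigma(F_1^{(i)})$ with $\sum\frac{q}{n}\in\NN$ is to pair one $\frac15$-point with one $\frac45$-point; hence $A_1$ and $A_2$ lie on different fibres and $A_1A_2=0$. A direct computation gives $\Theta_1=\Theta_2=\frac1{10}$, $g_1=g_2=5$, $|G|=80$ and $F_1F_2=|G|=80$; moreover $\gamma+p_g=0$, so by the remark following Proposition \ref{E2KE} the classes $\{A_i,F_j\}$ form a basis and Proposition \ref{E2KE} applies to \emph{every} curve.

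Next, for a hypothetical $(-1)$-curve $E$ I would feed $K_SE=E^2=-1$ into (\ref{KE}) and (\ref{E2}). Writing $e_1=EA_1$, $e_2=EA_2$ and using that $M^{-1}$ is block diagonal with the two $(-5)$-blocks each contributing $-\frac15$, while $b$ is supported on $A_1,A_2$ with entries $3$, one obtains
\begin{align*}
\mu_1+\mu_2 &= 6(e_1+e_2)-10,\\
\mu_1\mu_2 &= -40+8(e_1^2+e_2^2)+40c,
\end{align*}
where $c\in\{0,\tfrac45,\tfrac65\}$ is the contribution of the (at most one) $(-2)$-curve met by $E$. Here I use Corollary \ref{b-3} to bound $e_i\le 2$ (and $EA\le 1$ on each $(-2)$-curve), the standard contraction argument that $E$ meets at most one $(-2)$-curve, Proposition \ref{3points} to force $E$ to meet the exceptional locus in at least three points, and the fact that no fibre component is a $(-1)$-curve (the $F_1^{(i)}$ have $(F_1^{(i)})^2=-1$ but, by (\ref{KE}) applied with $\mu_1=16,\mu_2=0$, would require $e_1+e_2=\frac{13}{3}\notin\ZZ$) to guarantee $\mu_1,\mu_2>0$. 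Running through the finitely many admissible $(e_1,e_2)$ kills every case: for $e_1+e_2=2$ or $3$ the resulting quadratic in $\mu_i$ has negative discriminant or a zero factor once $c\neq 0$ or Proposition \ref{3points} is imposed, leaving only $(e_1,e_2)=(2,2)$, $c=0$, which yields the integral solution $\{\mu_1,\mu_2\}=\{2,12\}$.

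The main obstacle is precisely this surviving case, which cannot be excluded numerically. I would dispose of it geometrically: contracting $E$ turns the two $(-5)$-curves into curves of self-intersection $-5+e_i^2=-1$ meeting with multiplicity $A_1A_2+e_1e_2=4$. Two $(-1)$-curves meeting on a surface of general type is impossible (contract one and apply adjunction to the image of the other, which becomes a rational curve of positive self-intersection with negative intersection with the canonical class), so this case is ruled out as well. Therefore $S$ has no $(-1)$-curve, is minimal with $K_S^2=-2<1$, and hence cannot be of general type.
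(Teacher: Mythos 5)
Your numerical reduction via Proposition \ref{E2KE} is correct, and it is in fact more explicit than what the paper does for this surface: the paper uses the Diophantine method only for surfaces 1)--3) of Table \ref{K2<1} and treats 4) purely geometrically (no $(-2)$-curve is met, so $E$ passes twice through one $(-5)$-curve and at least once through the other, by Proposition \ref{3points} and Corollary \ref{b-3}). Your two equations, the bounds $e_i\le 2$, the exclusion of fibre components, and the conclusion that only $(e_1,e_2)=(2,2)$, $c=0$, $\{\mu_1,\mu_2\}=\{2,12\}$ survives the arithmetic are all right.

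The gap is in your disposal of that surviving case. After contracting $E$, the image $\overline{A_i}$ of $A_i$ is \emph{not} an exceptional curve of the first kind: since $EA_i=2$, the curve $\overline{A_i}$ acquires a point of multiplicity $2$ at the image of $E$, so it is a singular rational curve with $\overline{A_i}^2=-1$, $p_a(\overline{A_i})=1$ and $K_{S'}\overline{A_i}=+1$ (not $-1$). Such a curve cannot be contracted, so the step ``contract one and apply adjunction to the image of the other'' is not available, and no local contradiction follows: two arithmetic-genus-one rational curves of self-intersection $-1$ meeting with multiplicity $4$ are not numerically absurd on a surface of general type (for instance $(\overline{A_1}+\overline{A_2})^2=6$ and $K_{S'}(\overline{A_1}+\overline{A_2})=2$ violate nothing). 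The paper's argument at this point is global rather than local: one shows that the blown-down surface $S'$ carries no further $(-1)$-curve --- this is Remark \ref{singular} applied to the resulting configuration of (possibly singular) rational curves, using again Propositions \ref{3points} and \ref{KC<0} --- hence $S'$ is minimal, and then $K_{S'}^2=K_S^2+1=-1<1$ contradicts general type. You need to replace your final paragraph by an argument of this kind; everything before it can stay.
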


\begin{proof}Here the basket is  $\{2\times \frac 15,2\times \frac 45 \}$. Assume that $S$ is of general type. Since $K_S^2 = -1$ there must be a (-1)-curve $E$ on $S$. $E$ has to intersect t least a (-5)-curve, and cannot intersect any rational (-2)-curve (or, as in the previous proof, after contracting it, we could contract enough curves to contradict Proposition \ref{KC<0}).

So $E$ passes twice through one of the (-5)-curves and at least once through the other. After contracting $E$ we get a surface $S'$ with a configuration of rational curves as in Remark \ref{singular}. Therefore we conclude that $S'$ is minimal, contradicting $K_{S'}^2 = 0$.
\end{proof}

\begin{lemma}\label{5-9}
The product-quotient surfaces 5),  6), 7), 8), 9) in table \ref{K2<1} are not of general type.
\end{lemma}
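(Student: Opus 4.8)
The plan is to exploit that every surface $5)$--$9)$ has $K_S^2=0$, whereas a minimal surface of general type satisfies $K^2\geq 1$. Hence it suffices to prove that each of these surfaces, \emph{were} it of general type, would be minimal: this contradicts $K_S^2=0$ and rules out general type.

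First I would read off the exceptional configuration of $\sigma$ from the basket by expanding the Hirzebruch--Jung continued fractions of Definition \ref{numbers}. For $5)$ the basket is $4\times\frac25$, and since $\frac25=[3,2]$ each such singularity resolves into a $(-3)$-curve followed by a $(-2)$-curve; thus the exceptional locus consists exclusively of $(-3)$- and $(-2)$-curves. For $6)$--$9)$ the basket is $\{2\times\frac14,\,2\times\frac34\}$, and since $\frac14=[4]$ while $\frac34=[2,2,2]$, each $\frac14$ contributes a single $(-4)$-curve and each $\frac34$ a chain of three $(-2)$-curves; hence the exceptional locus consists of exactly two $(-4)$-curves together with six $(-2)$-curves.

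With these configurations identified, the conclusion follows directly from Lemma \ref{-4-3-2}. Part i) applies verbatim to $5)$, whose exceptional locus contains only $(-3)$- and $(-2)$-curves. Part ii) applies to $6)$--$9)$: the only curves of self-intersection below $-2$ are the two $(-4)$-curves coming from the two copies of $\frac14$, so there are at most two curves of self-intersection $(-3)$ or $(-4)$ and every other exceptional curve is a $(-2)$-curve, exactly matching the hypothesis. In either case the lemma forces $S$ to be minimal, so $K_S^2\geq 1$, contradicting $K_S^2=0$.

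There is essentially no obstacle here: once the continued-fraction bookkeeping is done, Lemma \ref{-4-3-2} carries the argument. The only point meriting care is verifying that in $6)$--$9)$ the two $(-4)$-curves are the sole exceptional curves of self-intersection strictly less than $-2$, so that hypothesis ii) is met with exactly two such curves; this is automatic because $\frac14$ is the only basket entry yielding a curve of self-intersection $<-2$ and it appears with multiplicity two.
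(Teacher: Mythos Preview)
Your proposal is correct and follows essentially the same approach as the paper: both argue that if $S$ were of general type then Lemma \ref{-4-3-2} forces minimality, contradicting $K_S^2=0$. You simply make explicit the continued-fraction computations (identifying the $(-3),(-2)$ chains for $\tfrac25$ and the single $(-4)$ versus the $(-2,-2,-2)$ chain for $\tfrac14$ and $\tfrac34$) that the paper leaves implicit; one minor notational slip is writing ``$\tfrac25=[3,2]$'' when strictly it is $\tfrac{5}{2}=[3,2]$ per Definition \ref{numbers}, but the conclusion is unaffected.
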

\begin{proof}
Here the basket is $\{2\times \frac 14,2\times \frac 34 \}$ or 
 $\{ 4\times \frac 25 \}$. In all cases, if $S$ is of general type, ity is minimal by Lemma \ref{-4-3-2}, contradicting $K_S^2=0$.
\end{proof}

\begin{lemma}
The product-quotient surface 10) in table \ref{K2<1} is not of general type.
\end{lemma}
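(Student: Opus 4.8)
The plan is to follow the same contradiction strategy used for surfaces 4)--9): assume that $S$ is of general type and derive a contradiction with $K_S^2 = -1$. The basket here is $\{\frac 15, 2\times \frac 25, \frac 45\}$, with type $t_1 = (0;2,5,5)$, $t_2 = (0;3,3,5)$ and $G = \mathrm{SmallGroup}(60,5)$ (the alternating group $\mathfrak{A}_5$). Since $K_S^2 = -1 < 0$, if $S$ were of general type its minimal model $\overline S$ would satisfy $K_{\overline S}^2 \geq 1$, so there must exist at least one $(-1)$-curve $E$ on $S$. My aim is to analyze the possible intersection behaviour of $E$ with the four exceptional curves of $\sigma$ and rule out every case.

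**Analyzing the exceptional configuration.** The four singular points contribute exceptional curves whose selfintersections are read off from the continued fractions: $\frac{5}{1} = [5]$ gives a single $(-5)$-curve, $\frac{5}{4} = [2,2,2,2]$ gives a Hirzebruch--Jung string of four $(-2)$-curves, and each $\frac{5}{2} = [3,2]$ (equivalently $\frac{5}{3}=[2,3]$) gives a chain of a $(-3)$-curve and a $(-2)$-curve. So $l = 1 + 4 + 2 + 2 = 9$. The key constraint is Corollary \ref{b-3}: for a $(-1)$-curve $E$ and an exceptional curve $C$ with $C^2 = -b$ one has $EC \leq \max(1, b-3)$, so $E$ can meet a $(-2)$-curve at most once, a $(-3)$-curve at most once, and the $(-5)$-curve at most twice. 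Moreover, as in the earlier proofs, $E$ cannot meet two \emph{distinct} $(-2)$-curves, since contracting $E$ would produce two $(-1)$-curves meeting transversally, impossible on a surface of general type. Combining this with Proposition \ref{3points}, which forces the preimage of the singular locus under $\PP^1 \to X$ to have cardinality at least three, sharply restricts the vector $e = (EA_i)_i$: the total intersection of $E$ with the exceptional locus, counted with the constraints above, must be distributed among very few curves.

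**Deriving the numerical contradiction.** Once the finitely many admissible vectors $e$ are listed, I would apply Proposition \ref{E2KE}. Here $\Theta_1 = 2(0)-2+(1-\tfrac12)+2(1-\tfrac15) = \tfrac{3}{5}$ and $\Theta_2 = 2(0)-2+2(1-\tfrac13)+(1-\tfrac15) = \tfrac{2}{15}$, and $|G| = 60$. Substituting $K_S E = E^2 = -1$ into equations (\ref{KE}) and (\ref{E2}) yields, for each admissible $e$, a system of two equations in $\mu_1, \mu_2$ (one linear from $K_SE$, one from $E^2$ after clearing $e^T M^{-1} e$). Since Proposition \ref{E2KE} guarantees $\mu_1, \mu_2 \in \NN$, each such system is an integrality test: I expect every candidate to fail to admit a solution in nonnegative integers, exactly as in surfaces 1)--3). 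Alternatively, one might hope to invoke Lemma \ref{-4-3-2} or Remark \ref{singular} directly; but since the configuration contains a $(-5)$-curve (not of selfintersection $(-3)$ or $(-4)$), that shortcut does not literally apply, so the explicit computation via Proposition \ref{E2KE} seems unavoidable.

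**The main obstacle.** The hard part will be the bookkeeping in enumerating the admissible intersection vectors $e$: one must carefully combine Corollary \ref{b-3}, the ``no two distinct $(-2)$-curves'' rule, and Proposition \ref{3points}, and in particular determine how $E$ may pass through the $(-5)$-curve (once or twice) and which single $(-2)$-curve in each Hirzebruch--Jung string it may touch. After contracting $E$, chains of the form $(-3)$--$(-2)$ or the $(-2)$-string may degenerate into singular or lower-selfintersection curves, so the cleanest route is to avoid contraction arguments and instead feed each remaining case into the matrix formulae (\ref{KE})--(\ref{E2}) and check the resulting Diophantine system directly for the absence of integral solutions.
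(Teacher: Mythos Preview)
Your plan is sound in principle but takes a longer route than the paper, and you explicitly dismiss the very shortcut the paper uses. The paper does \emph{not} compute any Diophantine systems via Proposition~\ref{E2KE}; instead it argues exactly as for surface~4). The constraints you list (Corollary~\ref{b-3}, the ``no two $(-2)$-curves'' rule, Proposition~\ref{3points}) force any $(-1)$-curve $E$ to meet the $(-5)$-curve at least once and at least one of the two $(-3)$-curves. Contracting $E$ therefore raises the selfintersection of the $(-5)$-curve to at least $-4$ and drops one $(-3)$-curve to at most $-2$, so on the blown-down surface $S'$ the configuration of exceptional rational curves consists of at most two (possibly singular) curves of selfintersection $-3$ or $-4$ together with $(-2)$-curves---precisely the situation of Remark~\ref{singular}. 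That remark then says $S'$ is minimal, contradicting $K_{S'}^2=0$. The point you missed is that Remark~\ref{singular} is to be applied \emph{after} contracting $E$, not to $S$ itself; once the $(-5)$-curve has been tamed to selfintersection $\geq -4$, the lemma applies.

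Your Diophantine route via Proposition~\ref{E2KE} would also work, but it requires enumerating and checking a fair number of admissible vectors~$e$ (nine exceptional curves, several configurations), whereas the paper's contraction argument is a single paragraph. Incidentally, your value of $\Theta_1$ is wrong: $2(0)-2+(1-\tfrac12)+2(1-\tfrac15)=-2+\tfrac12+\tfrac85=\tfrac{1}{10}$, not $\tfrac{3}{5}$; this would propagate into every one of your linear equations, so if you do carry out the computation be sure to correct it.
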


\begin{proof} Here the basket is $\{\frac 15, 2\times \frac 25, \frac 45 \}$.
Assume that $S$ is of general type. Then $S$ contains a (-1)-curve $E$. After contracting $E$, which has to pass at least once through the (-5)-curve and at least once through a (-3)-curve, we get a surface $S'$ with a configuration of rational curves as in Remark \ref{singular}. Therefore we conclude that $S'$ is minimal, contradicting $K_{S'}^2 = -1$.
\end{proof}

This concludes the proof of Theorem \ref{gamma0}.
\section{Surfaces of general type with $p_g=0$ and $\gamma>0$}\label{constructions}

We shall give now a detailed description of the minimal models of the three product-quotient surfaces  of general type with $p_g=0$ and $\gamma >0$ which we were able to find running our computer program. In fact, we believe that there are no more non-minimal product-quotient surfaces  of general type with $p_g=0$ left.
\subsection{A numerical Godeaux surface with torsion of order $4$}
The group $G$ is the subgroup of order $96$ of the permutation group $S_8$ generated by $(123)$, $(12)(34)$, $(57)$ and $(5678)(12)$.

Its action on $\{1,\ldots,8\}$ has two orbits, $\{1,\ldots,4\}$ and $\{5,\ldots,8\}$. Indeed $G$ is an index $2$ subgroup of $S_4 \times D_4$ where $S_4$ is the permutation group of $\{1,2,3,4\}$, and $D_4$ is the isometry group of the square, embedded in $S_8$  by considering its action on the vertices of the square and labeling them counterclockwise as $5,6,7,8$.

The curves $C_1$ and $C_2$ are very similar, they are both G-covers of $\PP^1$ branched on $\{p_1=1, p_2=0, p_3=\infty\}$ with respective generating vectors
\begin{itemize}
\item $\{g_1:=(123)(57),g_2:=(4321)(56)(78),g_3:=(g_1g_2)^{-1})\}$;
\item $\{g'_1:=(123)(57),g'_2:=(4321)(5678),g'_3:=(g'_1g'_2)^{-1})\}$.
\end{itemize} 
Their respective signatures are $(0;6,4,4)$ and $(0;6,4,2)$.

Our computer program shows that
\begin{prop}
The product-quotient surface $S$ with quotient model $X=(C_1 \times C_2)/G$ above has $p_g=q=K^2_S=0$, $\pi_1(X)\cong \ZZ_4$ and $\gamma=1$. The basket of singularities of 
$X$ is $\{2\times\frac16,\frac23,2\times \frac12\}$. All singular points of $X$ are mapped onto $(1,1)$ by the natural map $X=(C_1 \times C_2)/G \rightarrow \PP^1 \times \PP^1= C_1/G \times C_2/G$.
\end{prop}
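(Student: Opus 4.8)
The plan is to extract all the stated invariants from two finite computations in $G$ --- the basket of singularities of $X$ and the fundamental group --- and to feed the basket into the formulas of Proposition \ref{forminf}.

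I would first check the purely combinatorial input: that the four permutations generate a group $G$ with $|G|=96$, that each of the two triples multiplies to $1$ and has elements of the prescribed orders $(6,4,4)$ and $(6,4,2)$, and that each triple generates $G$, so that both are genuine generating vectors. This pins down $C_1,C_2$; since $\Theta_1=\tfrac13$ and $\Theta_2=\tfrac1{12}$, Hurwitz's formula \eqref{Hurwitz} gives $g(C_1)=17$ and $g(C_2)=5$, both $\geq 2$. As both quotient curves are $\PP^1$, Proposition \ref{forminf} yields $q=g(C_1/G)+g(C_2/G)=0$ at once.

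Next the basket. A singular point of $X$ is a $G$-orbit of pairs $(x,y)\in C_1\times C_2$ with $\Stab(x)\cap\Stab(y)\neq 1$; as $G$ acts faithfully on each factor such an intersection is cyclic and acts on $T_xC_1\oplus T_yC_2$ with an isolated fixed point, hence gives an honest cyclic quotient singularity whose type is read off from the two rotation numbers. Writing the branch points as $p_1=1,p_2=0,p_3=\infty$, a point over $(p_i,p_j)\in\PP^1\times\PP^1$ has $\Stab(x)$ conjugate to $\langle g_i\rangle$ and $\Stab(y)$ conjugate to $\langle g'_j\rangle$, so a nontrivial intersection forces some nontrivial power of $g_i$ to be $G$-conjugate, hence of equal cycle type, to some nontrivial power of $g'_j$. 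To prove that all singularities lie over $(1,1)$ I would rule out every other pair by cycle type in $S_8$: order $3$ and order $6$ elements occur only in $\langle g_1\rangle=\langle g'_1\rangle$; the order-$4$ powers of the first vector have type $4{+}2{+}2$ (from $g_2$) or $2{+}4$ (from $g_3$), while those of the second have type $4{+}4$ (from $g'_2$); and the involutions are $(57)$, $(13)(24)$, $(57)(68)$ on the first side and $(57)$, $(13)(24)(57)(68)$, $(14)(56)(78)$ on the second, whose cycle types agree only for the pair coming from $\langle g_1\rangle$. Hence only $(i,j)=(1,1)$ can contribute. Over $(1,1)$ one has $g_1=g'_1$, so the singular points are indexed by the double cosets $\langle g_1\rangle\backslash G/\langle g_1\rangle$ with $\langle g_1\rangle\cap\delta\langle g_1\rangle\delta^{-1}\neq 1$, and for each such class I would compute the pair of rotation numbers to determine its type; this is where the real work (and the computer) goes in, and an error in a single rotation number would corrupt the whole basket. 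The output is $\mathfrak{B}=\{2\times\tfrac16,\tfrac23,2\times\tfrac12\}$, which I would cross-check against Proposition \ref{PolizziInt}: all types are symmetric ($q=q'$) and $\sum q/n=\tfrac16+\tfrac16+\tfrac23+\tfrac12+\tfrac12=2\in\NN$.

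With $\mathfrak{B}$ known the rest is arithmetic. From Definition \ref{numbers} one gets $\mu=2\cdot\tfrac56+\tfrac23+2\cdot\tfrac12=\tfrac{10}3$, $\ l=2\cdot 1+2+2\cdot 1=6$ and $\gamma=2\cdot\tfrac59-\tfrac19+2\cdot 0=1$; then Proposition \ref{forminf} gives $\chi=\frac{(17-1)(5-1)}{96}+\frac{\mu-2\gamma}{4}=\tfrac23+\tfrac13=1$, whence $p_g=\chi-1+q=0$ and $K_S^2=8\chi-2\gamma-l=8-2-6=0$. Finally, since resolving cyclic quotient singularities kills precisely their local fundamental groups, $\pi_1(X)\cong\pi_1(S)$, and I would compute this as the quotient of the fibre product $\{(\gamma_1,\gamma_2)\in\BT_1\times\BT_2 : \varphi_1(\gamma_1)=\varphi_2(\gamma_2)\}$ by the normal closure of the local monodromies of the singular points, following the standard recipe for product-quotient surfaces; the result is $\ZZ_4$, and its abelianisation $H_1(S,\ZZ)=\ZZ_4$ is the torsion that makes the minimal model (with $K^2=1$) a numerical Godeaux surface. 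The two genuinely computer-assisted steps --- the rotation-number determination of the types over $(1,1)$ and this fibre-product presentation of $\pi_1$ --- are the main obstacles; everything else is formula-pushing.
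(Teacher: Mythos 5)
Your proposal is correct, and every number in it checks out: $\Theta_1=\frac13$, $\Theta_2=\frac1{12}$, $g(C_1)=17$, $g(C_2)=5$, $\mu=\frac{10}{3}$, $l=6$, $\gamma=2\cdot\frac59-\frac19=1$, $\chi=\frac{64}{96}+\frac13=1$, hence $p_g=q=K_S^2=0$. The paper offers no written argument for this proposition beyond the phrase ``Our computer program shows that'', so your route is essentially the paper's --- a finite computation in $G$ whose output (the basket) is fed into Proposition \ref{forminf} --- but you make hand-verifiable two pieces that the paper leaves entirely to the machine. The first is the cycle-type argument in $S_8$ localizing all singular points over $(1,1)$: I verified that the nontrivial powers of $g_2,g_3$ have cycle types $4{+}2{+}2$, $2{+}4$ and $2{+}2$, those of $g'_2,g'_3$ have types $4{+}4$, $2^4$ and $2^3$, and the only coincidences of cycle type between the two sides occur inside $\langle g_1\rangle=\langle g'_1\rangle$, so indeed only $(i,j)=(1,1)$ can contribute. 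The second is the sanity check $\sum q/n=2\in\NN$ against Proposition \ref{PolizziInt}, which is legitimate since all five points lie on the single fibre $F_1^{(1)}$ and all types here are self-dual. The two steps you explicitly delegate to the computer --- the rotation numbers determining the types over $(1,1)$ and the fibre-product presentation of $\pi_1$ --- are exactly the steps the paper also delegates, so there is no gap relative to the paper's own standard of proof.
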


We consider the map $\PP^1 \stackrel{z \mapsto z^4}{\longrightarrow}\PP^1$, and the normalization of the fibre product as in the following commutative diagram (for $i=1,2$):
\begin{equation*}
\xymatrix{
C'_i\ar^{\zeta_i}[r]\ar^{\lambda_i}[d]&C_i\ar[d]\\
\PP^1\ar[r]&\PP^1 \\
}
\end{equation*}
Then $\lambda_i$ is a $G-$cover of $\PP^1$ branched in the $4-$th roots of unity. Lifting loops as in the following picture

\includegraphics[width=\linewidth]{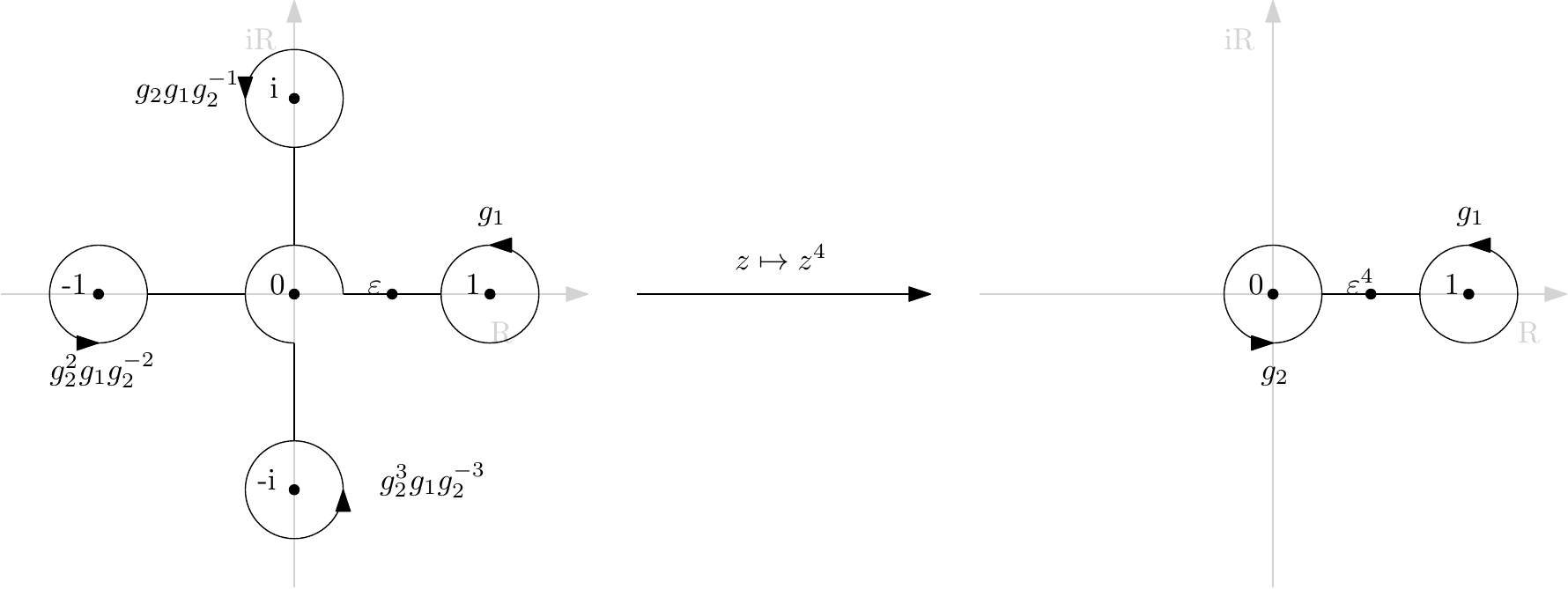}
we see that $\lambda_1$ is the $G-$cover with generating vector $\{g_1,g_2g_1g_2^{-1},g_2^2g_1g_2^{-2},g_3g_1g_2^{-3}\}$
and  $\lambda_2$ is the $G-$cover with the analogous generating vector obtained substituting $g_i$ with $g_i'$.

\begin{rem}
It is worth mentioning that here the word "generating" is a slight abuse of notation,  since the above elements do not generate the whole group $G$. This  implies that $C'$ is not connected, the number of connected components being the index of the  subgroup generated by $\{g_1,g_2g_1g_2^{-1},g_2^2g_1g_2^{-2},g_3g_1g_2^{-3}\}$ in $G$; this does not affect in any way our argument.
\end{rem}
The reader can easily check that the two generating vectors coincide, so $\lambda_1$ and $\lambda_2$ are isomorphic $G-$covers. In particular, we have a map 
$
\zeta':\Gamma \cong C_1' \cong C_2' \stackrel{(\zeta_1,\zeta_2)}\longrightarrow C_1 \times C_2
$
which is $G-$equivariant, henceinduces a map on the quotient
$$
\zeta \colon \Gamma/G \cong \PP^1 \rightarrow X=(C_1 \times C_2)/G; 
$$
$E':=\zeta(\PP^1)$ is a rational curve on $X$.

Denote by $A_1$, $A_2$ the exceptional curves produced by the singularities $\frac16$, and by $E$ the strict transform of $E'$. 
\begin{prop}
$E$ is a smooth rational curve with $K_SE=E^2=-1$. Moreover, $E(A_1+A_2)=4$, and $EA_i=0$ for every further exceptional curve $A_i$ of $\sigma$.
\end{prop}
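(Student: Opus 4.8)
The plan is to isolate three geometric facts about $E$ --- that it is smooth rational, the two degrees $EF_1$, $EF_2$, and the intersection numbers $EA_i$ --- from which $K_SE=E^2=-1$ will then follow by a short formal computation. \emph{Smoothness and rationality.} I would first argue that $\zeta\colon\PP^1\to X$ is birational onto $E'$ and that its lift $\tilde\zeta\colon\PP^1\to S$ is a closed embedding, so that $E=\tilde\zeta(\PP^1)$ is a smooth rational curve; by adjunction this already yields $K_SE+E^2=2g(E)-2=-2$, reducing the problem to computing one of the two numbers. Rationality is immediate from the parametrization. For injectivity and the immersion property away from the four points of $\PP^1$ lying over $(1,1)\in\PP^1\times\PP^1$ (the images of the fourth roots of unity under $z\mapsto z^4$) I would use that the isomorphism $\lambda_1\cong\lambda_2$ of $G$-covers identifies the two normalized fibre products compatibly over the common base, so that on the smooth locus $\zeta$ separates points; smoothness of the strict transform at the four special points is handled together with the local analysis below.

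Next I would compute $\mu_2:=EF_1$ and $\mu_1:=EF_2$. Since $F_i$ is a general fibre of $f_i\colon S\to C_i/G=\PP^1$, the number $EF_i$ equals the degree of the map $E\to C_i/G$. The commutative fibre-product square shows that, through the birational map $\zeta$, this composition is the degree-four map $z\mapsto z^4$ on the base $\PP^1$; hence $\mu_1=\mu_2=4$. (A posteriori this value is forced, since $\mu_i=1$ would make $E^2$ non-integral.)

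The decisive step is the determination of the $EA_i$. By the preceding proposition every singular point of $X$ lies over $(1,1)$, so $E'$ can meet the singular locus only at the (at most four) points over $(1,1)$. I would analyze, using the monodromies $g_1,\ g_2g_1g_2^{-1},\ g_2^2g_1g_2^{-2},\ g_3g_1g_2^{-3}$ attached to the four fourth roots of unity, through which singular points $E'$ actually passes and in which tangent direction, and then read off from the Hirzebruch--Jung resolution which exceptional component the strict transform meets and with what multiplicity. The aim is to show that $E'$ passes through exactly the two singularities of type $\frac16$ (each resolved by a single $(-6)$-curve, namely $A_1$ and $A_2$), avoids the $\frac23$-point and the two nodes, and meets $A_1\cup A_2$ transversally in four points altogether, giving $E(A_1+A_2)=4$ and $EA_i=0$ for the remaining four exceptional curves. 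This identification of the local branches from the monodromy data is where the real work lies and is, I expect, the main obstacle; note that it cannot be read off from the numerics alone, since intersections with the $(-2)$-curves are invisible to the formula below.

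Finally I would assemble the numbers. Ordering the six exceptional curves as the two $(-6)$-curves, the two $(-2)$-curves of the $\frac23$-string, and the two nodal $(-2)$-curves, set $b=(K_SA_i)_i=(4,4,0,0,0,0)^T$ and $e=(EA_i)_i$, and let $M$ be the block-diagonal intersection matrix of the basket. The computation of $K_SE$ in Proposition \ref{E2KE} uses only that $K_S\equiv\Theta_1F_1+\Theta_2F_2-\mathfrak{A}$ lies in the span of the $F_j$ and $A_i$, on which the intersection form is non-degenerate; hence $K_SE$ depends only on the orthogonal projection of $[E]$ to that span, and the formula $K_SE=\mu_1\Theta_2+\mu_2\Theta_1+e^TM^{-1}b$ remains valid even though $E$ itself is not in the span (here $p_g+\gamma=1$). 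With $\Theta_1=\frac13$ and $\Theta_2=\frac1{12}$ this gives $K_SE=\frac53+e^TM^{-1}b$. Since $M^{-1}$ is block diagonal and $b$ vanishes on every $(-2)$-block, only the two $(-6)$-blocks contribute, so $e^TM^{-1}b=-\frac23\,E(A_1+A_2)=-\frac83$, independently of how the intersection number $4$ splits between $A_1$ and $A_2$. Thus $K_SE=-1$, and then $E^2=-2-K_SE=-1$ by adjunction, completing the proof.
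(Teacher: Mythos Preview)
Your overall architecture is the paper's: establish that $\zeta$ is birational onto its image, identify the intersections of $E$ with the exceptional curves, compute $K_SE$ from $\mu_1\Theta_2+\mu_2\Theta_1+e^TM^{-1}b$, and deduce $E^2$ by adjunction. The numerics you write down are correct, and you are right that only the $K_SE$-formula of Proposition~\ref{E2KE} is needed (since $K_S$ lies in the span of the $F_j$ and $A_i$), while $E^2$ follows from $K_SE+E^2=-2$.

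Where you diverge from the paper is exactly at the two places you flag as delicate, and in both the paper has a one-line argument that you miss. For generic injectivity, your appeal to the isomorphism $\lambda_1\cong\lambda_2$ ``separating points on the smooth locus'' is not a proof: two distinct $G$-orbits on $\Gamma$ could a priori map to the same $G$-orbit on $C_1\times C_2$. The paper instead composes with $X\to\PP^1\times\PP^1$ to see that $\zeta$ is $d$-to-$1$ for some $d\mid 4$; since only the four $4$-th roots of unity can hit $\Sing X$, the image $E'$ passes through at most $4/d$ singular points, and Proposition~\ref{3points} forces $d=1$.

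More importantly, the determination of the $EA_i$---which you call ``the main obstacle'' and propose to attack by a branch-by-branch monodromy analysis you do not carry out---is in the paper a single observation: the local monodromies at the four $4$-th roots of unity are $g_1$ and its $g_2$-conjugates, all of order~$6$ (since $g_1=(123)(57)$ has order~$6$). Hence each of these four points has stabilizer of order~$6$, so its image in $X$ is a singular point of multiplicity~$6$. In the basket $\{2\times\tfrac16,\tfrac23,2\times\tfrac12\}$ the only multiplicity-$6$ points are the two $\tfrac16$ singularities, each resolved by a single $(-6)$-curve. This immediately gives $E(A_1+A_2)=4$ and $EA_i=0$ for $i\geq 3$; no case analysis of the Hirzebruch--Jung strings is needed. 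Your proposed route would work, but it is considerably more laborious than necessary, and as written the proof is incomplete precisely at this step.
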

\begin{proof}
First of all, let us show that $\zeta$ is generically injective. In fact, composing with the map $X \rightarrow \PP^1 \times \PP^1$ we get the map
$\PP^1 \stackrel{z \mapsto z^4}{\longrightarrow} \PP^1 \times \PP^1$; this shows that $\zeta$ is d-to-1 for a positive integer d which is a divisor of 4.

On the other hand, since all singular points of $X$ lie over $(1,1)$, this also shows that only the $4^{th}$ roots of unity may be mapped to singular points of $X$. 
So $E'$ will pass at most $\frac4d$ times through singular points of $X$; by Proposition \ref{3points}, $d=1$.

The smoothness of $E$ follows easily by a local computation. The only points of $E'$ contained in $\Sing X$, the $4^{th}$ roots of unity, have stabilizer of order $6$, so they are mapped to singular points of multiplicity $6$. This implies $E(A_1+A_2)=4$ and $EE_i=0$ for every further exceptional curve. 

Then $E=\sigma^*E'-\frac{a_1}6 A_1-\frac{a_2}6 A_2$ with $a_1+a_2=4$. Moreover $K_XE'=\frac{K_{C_1 \times C_2}\zeta'(\Gamma)}{|G|}=\frac{4|G|\Theta_1+4|G|\Theta_2)}{|G|}=4(\Theta_1+\Theta_2)=\frac53$. Therefore
$$
K_SE=K_XE'-\frac{a_1}6 K_SA_1-\frac{a_2}6 K_SA_2=\frac53-\frac46 (a_1+a_2)=-1.
$$
\end{proof}

Finally we can prove 
\begin{theo}
Contracting $E$ we get a minimal surface. In particular the minimal model of $S$ is a numerical Godeaux surface with torsion of order $4$.
\end{theo}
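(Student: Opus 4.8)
The plan is to show that after contracting the $(-1)$-curve $E$ exhibited in the previous proposition, no further $(-1)$-curve survives, so that the resulting surface is minimal; the identification of the minimal model as a numerical Godeaux surface with torsion $\ZZ_4$ will then follow from the already-computed invariants $p_g=q=0$, $K^2_S=0$, and $\pi_1(X)\cong\ZZ_4$. First I would record the effect of contracting $E$ on the exceptional configuration of $\sigma$. We know $E(A_1+A_2)=4$ with the two $(-6)$-curves $A_1,A_2$ produced by the two $\frac16$-singularities, and $EA_i=0$ for all the remaining exceptional curves, which come from the basket $\{2\times\frac16,\frac23,2\times\frac12\}$; thus contracting $E$ leaves the curves coming from $\frac23$ and from the two nodes untouched, while it modifies the self-intersections and mutual intersections of (the images of) $A_1$ and $A_2$. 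The natural first step is therefore to determine the distribution of the intersection number $4=E(A_1+A_2)$ between $A_1$ and $A_2$; I expect, arguing as in the Godeaux-surface computations earlier in the section, that the configuration after contraction falls into (a singular version of) the hypotheses of Lemma \ref{-4-3-2}.

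The key technical step is to rule out the existence of a second $(-1)$-curve $E'$ on $S$ disjoint from the analysis of $E$, equivalently a $(-1)$-curve $\bar E$ on the contracted surface $S'$. Here I would invoke Proposition \ref{3points}, which forces the image in $X$ of any rational curve to meet $\Sing X$ in at least three points, together with Corollary \ref{b-3}, which bounds the intersection of a $(-1)$-curve with a curve of self-intersection $-b$ by $\max(1,b-3)$. Applying these to the curves arising from the $\frac23$-singularity (which produces a $(-3)$-curve, since $\frac32=[2,2]$ so $l=2$ with both $b_i=2$ — wait, rather the Hirzebruch--Jung string of $\frac23$ consists of $(-2)$-curves) and from the two nodes (two $(-2)$-curves), I would show that a putative second $(-1)$-curve can only meet the images of $A_1,A_2$. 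Since the intersection form on the exceptional locus is negative definite and since after contraction the relevant curves become, up to singularities, curves of self-intersection $-3$ or $-4$ together with $(-2)$-curves, Lemma \ref{-4-3-2}(ii) combined with Remark \ref{singular} then forbids any further $(-1)$-curve.

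Concretely, once $E$ is contracted, the two images of $A_1$ and $A_2$ acquire self-intersections raised by $(EA_j)^2$, and I expect the outcome to be precisely a pair of (possibly singular) rational curves of self-intersection $-3$ or $-4$ meeting the $(-2)$-curves coming from the node- and $\frac23$-strings; this is exactly the configuration of Remark \ref{singular}, so the argument of Lemma \ref{-4-3-2}(ii) applies verbatim and shows $S'$ is minimal. The main obstacle is the bookkeeping in this first step: I must verify that the split of $E(A_1+A_2)=4$ and the resulting intersection matrix genuinely land in the scope of Lemma \ref{-4-3-2}, rather than producing, say, two curves meeting a common $(-2)$-curve in a way that the lemma does not cover. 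Assuming this configuration check goes through, minimality of $S'$ is immediate, and since contraction of a single $(-1)$-curve preserves $p_g$ and $q$ and raises $K^2$ by one, the minimal model $\overline S=S'$ satisfies $p_g=q=0$ and $K^2_{\overline S}=K^2_S+1=1$, i.e. it is a numerical Godeaux surface; its torsion is $\pi_1(X)\cong\ZZ_4$, completing the proof.
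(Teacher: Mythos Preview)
Your overall strategy coincides with the paper's: enumerate the possible distributions of $E(A_1+A_2)=4$, analyse the configuration of rational curves after contracting $E$, and conclude minimality via Remark \ref{singular}. Two points in your execution need correction.

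First, the paper opens by establishing that $\overline S$ is of general type (from $K_{\overline S}^2>0$, $q=0$, $\pi_1(\overline S)\neq 0$ and the Enriques--Kodaira classification). This is not cosmetic: Corollary \ref{b-3}, which drives the entire case analysis, presupposes that $S$ is of general type, so this step must come first.

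Second, your concrete numerical expectation after contraction is wrong, and this matters for your plan to land directly in Lemma \ref{-4-3-2}(ii). Applying Corollary \ref{b-3} to the $(-6)$-curves $A_1,A_2$ gives $EA_i\le 3$, so the only splits are $(2,2)$, $(3,1)$, $(1,3)$ --- this is exactly what the paper does. In the $(2,2)$ case the images of $A_1,A_2$ acquire self-intersection $-6+2^2=-2$ (they become \emph{nodal} rational curves meeting each other with multiplicity $4$); in the $(3,1)$ case the self-intersections become $-6+9=+3$ and $-6+1=-5$. In neither case do you obtain ``a pair of (possibly singular) rational curves of self-intersection $-3$ or $-4$'', so the configuration does \emph{not} fall under Lemma \ref{-4-3-2}(ii) as you anticipate. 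The appeal to Remark \ref{singular} therefore cannot be a straight citation: you have to rerun the underlying argument (a hypothetical further $(-1)$-curve, bounds from Corollary \ref{b-3}, and the impossibility of the resulting intersections on a surface of general type) on each of these specific singular configurations. This is precisely what the paper's picture and its one-line invocation of Remark \ref{singular} are encoding, case by case.
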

\begin{proof}
Since $K_{\overline S}^2>0$, $q=0$, $\pi_1(\overline{S}) \neq 0$,  by the Enriques-Kodaira classification $\overline{S}$ is of general type.

By corollary \ref{b-3}, $EA_1,EA_2 \leq 3$, so $(EA_1,EA_2)$ equals either $(2,2)$, or $(3,1)$, or $(1,3)$.
 
In the first case, the following picture describes how the configuration of curves changes after the contraction.

\includegraphics[width=\linewidth]{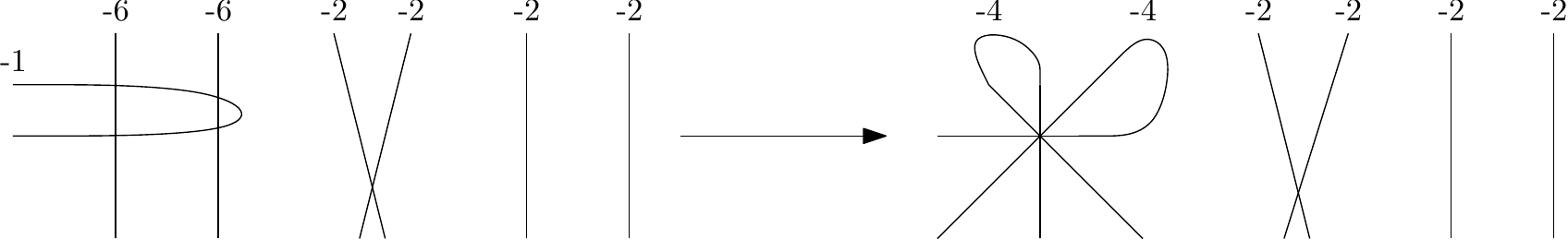}

The minimality follows then directly by remark \ref{singular}. 
A similar argument gives the minimality in the other two cases.
\end{proof}

\subsection{A numerical Godeaux surface with torsion of order $5$}
The group $G$ is $\ZZ_5^2$. The curves $C_1$ and $C_2$ are two G-covers of $\PP^1$ branched on $\{p_1=1, p_2=0, p_3=\infty\}$ with respective generating vectors
\begin{itemize}
\item $\{g_1:=(1,0),g_2:=(0,1),g_3:=(g_1g_2)^{-1})\}$;
\item $\{g'_1:=(1.0),g'_2:=(1,1),g'_3:=(g'_1g'_2)^{-1})\}$.
\end{itemize} 
Both signatures are $(0;5,5,5)$.
Our computer program shows that
\begin{prop}
The product-quotient surface $S$ with quotient model $X=(C_1 \times C_2)/G$ above has $p_g=q=K^2_S=-1$, $\pi_1(X)\cong \ZZ_5$ and $\gamma=2$. The basket of singularities of 
$X$ is $\{5\times\frac15\}$. All singular points of $X$ are mapped onto $(1,1)$ by the natural map $X=(C_1 \times C_2)/G \rightarrow \PP^1 \times \PP^1= C_1/G \times C_2/G$.
\end{prop}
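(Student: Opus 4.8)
The plan is to reduce every assertion to explicit combinatorics of the two generating vectors: first the numerical invariants via the formulas of Proposition \ref{forminf} and Definition \ref{numbers}, and only at the end the fundamental group. To begin, both actions have signature $(0;5,5,5)$, so $\Theta_1=\Theta_2=-2+3(1-\frac15)=\frac25$, and the Hurwitz formula (\ref{Hurwitz}) gives $2g(C_i)-2=25\cdot\frac25=10$, i.e.\ $g(C_1)=g(C_2)=6\geq2$, as a product-quotient surface requires. Since both quotient curves are $\PP^1$, Proposition \ref{forminf} already yields $q=g(C_1/G)+g(C_2/G)=0$.

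The heart of the numerical part is the basket. As $G=\ZZ_5^2$ is abelian, the stabilizer of a ramification point of $C_i\to\PP^1$ is exactly the order-$5$ subgroup (line of $\mathbb{F}_5^2$) generated by the corresponding entry of the generating vector, and a point $(x,y)\in C_1\times C_2$ has non-trivial diagonal stabilizer precisely when $\Stab(x)=\Stab(y)$, since two distinct lines of $\mathbb{F}_5^2$ meet only in $0$. I would therefore list the three stabilizer lines of $C_1$ and of $C_2$, intersect the two triples, and for each coincidence over a pair of branch points $(p_i,p_j)$ count the diagonal $G$-orbits on the product of the two fibres: $G$ acting on $(G/H)\times(G/H)$ has exactly $|G/H|=5$ orbits, producing five cyclic quotient singularities lying over $(p_i,p_j)\in\PP^1\times\PP^1$. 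For each such point the type $\frac{q}{n}$ is read off from the rotation numbers $(a,b)$ with which the common generator acts on $T_xC_1\times T_yC_2$, via $q\equiv b\,a^{-1}\pmod n$. Carrying this out, the only coincidence is the line $\langle(1,0)\rangle$ over $(p_1,p_1)=(1,1)$, where both monodromies act by $\zeta=e^{2\pi i/5}$, so all five orbits have type $\frac15$; hence $\mathfrak B=\{5\times\frac15\}$ with every singular point over $(1,1)$.

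With the basket fixed the rest is bookkeeping from Definition \ref{numbers}: for $\frac15$ one has $\frac51=[5]$, so $l_x=1$, $\mu_x=\frac45$ and $\gamma_x=\frac16(\frac{1+1}{5}+(5-3))=\frac25$; summing over the five points gives $l=5$, $\mu=4$ and $\gamma=2$. Since $\mu-2\gamma=0$, Proposition \ref{forminf} gives $\chi=\frac{(g_1-1)(g_2-1)}{|G|}+\frac{\mu-2\gamma}4=\frac{25}{25}+0=1$, whence $p_g=\chi-1+q=0$, and $K_S^2=8\chi-2\gamma-l=8-4-5=-1$. As a cross-check I would confirm $c_2(S)=12\chi-K_S^2=13$ against a direct orbit-by-orbit computation of the topological Euler number of $X$ increased by $l$.

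Finally the fundamental group, which I expect to be the main obstacle. Using that $\sigma$ and the passage to the minimal model leave $\pi_1$ unchanged, it suffices to compute $\pi_1(X)$ from the two appropriate orbifold homomorphisms $\varphi_i\colon\mathbb{T}(0;5,5,5)\to G$: $\pi_1(X)$ is the quotient of the fibre product $\mathbb{T}(0;5,5,5)\times_G\mathbb{T}(0;5,5,5)=\{(a,b):\varphi_1(a)=\varphi_2(b)\}$ by the normal subgroup generated by the local fundamental groups of the singular points (the cyclic subgroups fixing the points of $C_1\times C_2$). This is the delicate step: the fibre product is an extension of $\ZZ_5^2$ by $\pi_1(C_1)\times\pi_1(C_2)$, so the quotient is not visible from abelianisation alone, and it is exactly what the MAGMA routine evaluates; the expected outcome is $\pi_1(X)\cong\ZZ_5$. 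Everything preceding this last step is a finite, mechanical verification.
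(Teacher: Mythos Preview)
Your strategy is sound and considerably more transparent than the paper's, which simply records that ``our computer program shows'' the stated invariants. The bookkeeping for $\Theta_i$, $q$, and then $l$, $\mu$, $\gamma$, $\chi$, $p_g$, $K_S^2$ from the basket is correct, and your outline for $\pi_1(X)$ via the fibre product of the two orbifold groups is the right framework.

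There is, however, a genuine gap in the central step. With the generating vectors as printed, $\langle(1,0)\rangle$ is \emph{not} the only shared stabilizer line: on $C_1$ the third stabilizer is $\langle g_3\rangle=\langle(4,4)\rangle=\langle(1,1)\rangle$, and on $C_2$ the second stabilizer is $\langle g'_2\rangle=\langle(1,1)\rangle$. This second coincidence yields five further singular points over $(p_3,p_2)=(\infty,0)$; since $(1,1)=g_3^{\,4}$ acts on $T_xC_1$ by $\zeta^4$ while $(1,1)=g'_2$ acts on $T_yC_2$ by $\zeta$, these are of type $\tfrac45$. The basket would then be $\{5\times\tfrac15,\,5\times\tfrac45\}$, giving $\mu=8$, $\gamma=0$, $\chi=3$, $p_g=2$ --- contradicting the Proposition. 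Your method is right, but at the key moment you have asserted the conclusion you wanted rather than the one the printed data actually produce. The discrepancy points to a typo in the paper: replacing $g'_2$ by, for instance, $(1,3)$ (so that $g'_3=(3,2)\in\langle(1,4)\rangle$) makes $\langle(1,0)\rangle$ the unique common line, and then every step of your argument goes through verbatim. You should flag this inconsistency explicitly rather than gloss over it.
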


Since all singularities lie over $(1,1)$, they all lie in the same fibre of each of the two isotrivial fibrations, whose central fibres we denote respectively by $E_1$, $E_2$, which are $\ZZ_5$-quotients of  $C_2$ resp. $C_1$ with $5$ branching points; Hurwitz' formula shows that both $E_1$ and $E_2$ are rational. By Proposition \ref{PolizziInt} $E_1^2=E_2^2=-5\frac15=-1$. So both curves are exceptional divisors of the first kind. 

\begin{theo}
Contracting $E_1$ and $E_2$ we get a minimal surface. In particular, the minimal model of $S$ is a numerical Godeaux surface with torsion of order $5$.
\end{theo}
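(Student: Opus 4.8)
The plan is to contract the two disjoint $(-1)$-curves $E_1,E_2$ and then show that the resulting smooth surface $\overline{S}$ is minimal of general type by proving that $K_{\overline{S}}$ is nef and big, from which the numerical Godeaux conclusion is immediate. First I would record that $E_1$ and $E_2$ are genuinely disjoint. Since all five singular points lie over $(1,1)$ they lie on both central fibres, and each is of type $\frac15=\frac15(1,1)$, resolved by a single $(-5)$-curve $A_j$ (because $\frac51=[5]$); hence the two central fibres decompose numerically as $F_1\equiv 5E_1+\sum_{j=1}^5 A_j$ and $F_2\equiv 5E_2+\sum_{j=1}^5 A_j$, with $E_iA_j=1$, $F_iA_j=0$ and $A_jA_k=-5\,\delta_{jk}$. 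Using $F_1F_2=|G|=25$ one then computes $E_1E_2=\frac1{25}(F_1-\sum A_j)(F_2-\sum A_j)=\frac1{25}(25-25)=0$, so $\pi\colon S\to\overline{S}$ contracting $E_1$ and $E_2$ is an honest double blow-down; consequently $K_{\overline{S}}^2=K_S^2+2=1$, while $p_g(\overline{S})=q(\overline{S})=0$ by birational invariance.

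The heart of the argument is the nefness of $K_{\overline{S}}$. From the proof of Proposition \ref{E2KE} one has $\sigma^*K_X\equiv\Theta_1F_1+\Theta_2F_2$, and here $\Theta_1=\Theta_2=\frac25$; since each $\frac15(1,1)$ point resolved by a single $(-5)$-curve has discrepancy $\frac25-1=-\frac35$, this yields the numerical identity
\begin{equation*}
K_S\equiv\frac25F_1+\frac25F_2-\frac35\sum_{j=1}^5 A_j .
\end{equation*}
For an arbitrary irreducible curve $C$ set $a:=CE_1$, $b:=CE_2$ and $d:=\sum_j CA_j$; substituting $F_1\equiv 5E_1+\sum A_j$ and $F_2\equiv 5E_2+\sum A_j$ gives $K_SC=2a+2b+\tfrac15 d$. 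If $\Gamma:=\pi(C)$ for a curve $C\neq E_1,E_2$, then $K_S=\pi^*K_{\overline{S}}+E_1+E_2$ together with the projection formula gives $K_{\overline{S}}\Gamma=K_SC-a-b=a+b+\tfrac15 d$. For $C\notin\{E_1,E_2,A_1,\dots,A_5\}$ all of $a,b,d$ are non-negative, and for $C=A_k$ a direct check gives $K_{\overline{S}}\overline{A_k}=1+1-1=1\ge 0$; hence $K_{\overline{S}}\Gamma\ge 0$ for every irreducible $\Gamma$, i.e. $K_{\overline{S}}$ is nef.

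Finally I would conclude: $K_{\overline{S}}$ is nef with $K_{\overline{S}}^2=1>0$, hence nef and big, so $\overline{S}$ is a minimal surface of general type; in particular it contains no $(-1)$-curve, which is exactly the asserted minimality. As $p_g(\overline{S})=q(\overline{S})=0$ and $K_{\overline{S}}^2=1$, the surface $\overline{S}$ is a numerical Godeaux surface, and its torsion is $H_1(\overline{S},\ZZ)\cong\pi_1(\overline{S})^{\mathrm{ab}}\cong\pi_1(S)^{\mathrm{ab}}\cong\pi_1(X)^{\mathrm{ab}}\cong\ZZ_5$, so it has torsion of order $5$. I expect the only delicate point to be pinning down the numerical class of $K_S$ correctly, namely the discrepancy $-\frac35$ of each $\frac15(1,1)$ singularity and the fibre decompositions $F_i\equiv 5E_i+\sum A_j$; once these are in hand, nefness is forced because every intersection number entering $K_{\overline{S}}\Gamma$ is manifestly non-negative, and verifying $E_1E_2=0$ is the remaining small technical step.
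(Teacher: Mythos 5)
Your proof is correct, but it takes a genuinely different route from the paper. The paper splits the statement into two independent steps: it deduces that $\overline{S}$ is of general type from the Enriques--Kodaira classification (using $K_{\overline{S}}^2>0$, $q=0$ and $\pi_1(\overline{S})\cong\ZZ_5\neq 0$), and then obtains minimality from the configuration of rational curves left after the contraction (the five $(-5)$-curves $A_j$ become $(-3)$-curves, possibly singular and mutually intersecting, and Remark \ref{singular} together with the argument of Lemma \ref{-4-3-2} excludes further $(-1)$-curves). You instead pin down the numerical class $K_S\equiv 2E_1+2E_2+\frac15\sum A_j$ (via $\sigma^*K_X\equiv\Theta_1F_1+\Theta_2F_2$, the discrepancy $-\frac35$ of a $\frac15(1,1)$ point, and the fibre decompositions $F_i\equiv 5E_i+\sum A_j$), check $E_1E_2=0$, and verify directly that $K_{\overline{S}}=\pi_*K_S-$correction is nef with $K_{\overline{S}}^2=1$; nef and big then gives minimality \emph{and} general type in one stroke, with no appeal to the classification of surfaces. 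All your intersection numbers check out ($F_1F_2=|G|=25$, $(\sum A_j)^2=-25$, $K_{\overline{S}}\overline{A_k}=1$), and the multiplicities $a_j=1$ in $F_i\equiv 5E_i+\sum A_j$ are forced by $F_iA_j=0$ and $E_iA_j=1$, so the one point you flagged as delicate is indeed fine. Your approach is arguably more robust (it does not rely on the nonvanishing of $\pi_1$ to rule out rationality) and more quantitative, at the price of requiring the explicit numerical decomposition of $K_S$, which the paper's configuration argument avoids; the paper's method, on the other hand, is the one that generalizes to the many cases in its tables where no such clean basis of $H^{1,1}$ by $E_i$'s and $A_j$'s is available.
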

\begin{proof}
Since $K_{\overline S}^2>0$, $q=0$, $\pi_1(\overline{S}) \neq 0$, by the Enriques-Kodaira classification $\overline{S}$ is of general type.

The following picture describes how the configuration of curves changes after the contraction.

\includegraphics[width=\linewidth]{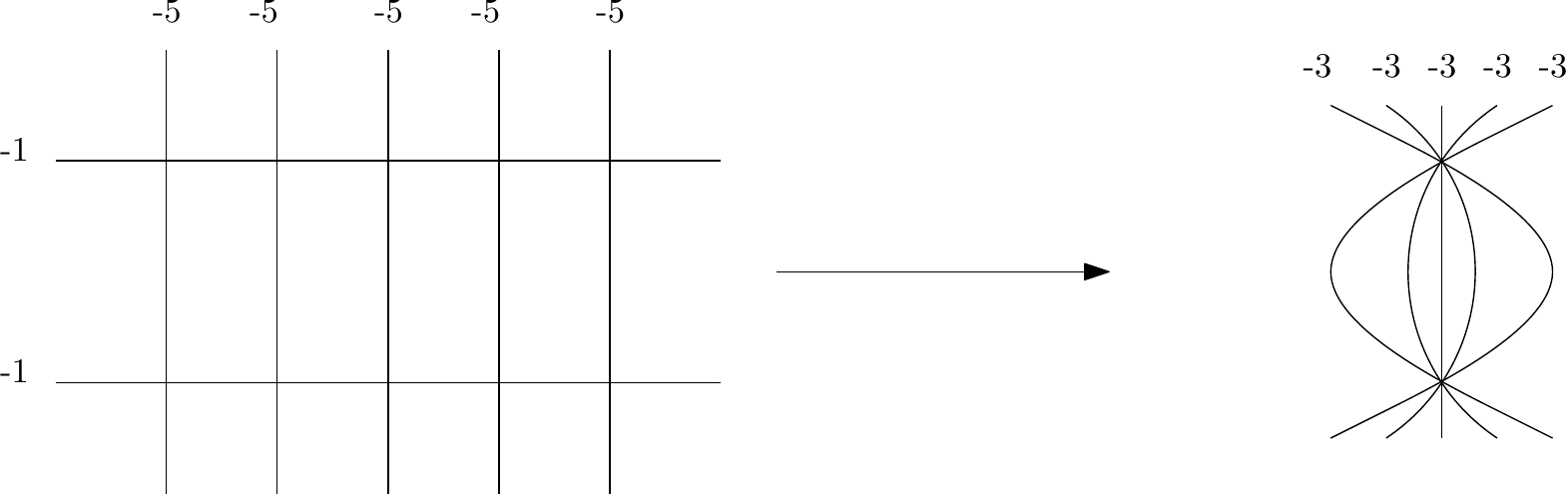}

The minimality follows then directly by remark \ref{singular}.
\end{proof}

\subsection{Are there more product-quotient surfaces of general type with $p_g=0$?}

By the results in\cite{bp} and Theorem \ref{gamma0} there are exactly 72 families of
surfaces of general type with $p_g=\gamma=0$. By Proposition \ref{gamma+pg} all missing 
product-quotient surfaces of general type have $\gamma>0$. We know three examples of them, the 
{\it fake Godeaux} described in \cite{bp} (with $K_S^2=\gamma=1$, $K_{\bar{S}}^2=3$), and the two numerical Godeaux surfaces described in this section.

We can prove the following
\begin{prop}\label{nothingmore}
Let $S$ be a product-quotient surface of general type with $p_g=0$ not among the $75$ families just mentioned.
Then

\begin{itemize}
\item either $\gamma \geq 4$,
\item or $\gamma=3$ and $X$ has a singular point of multiplicity at least $14$,
\item or $\gamma=2$ and $X$ has a singular point of multiplicity at least $45$.

\end{itemize}
\end{prop}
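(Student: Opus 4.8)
The statement claims that a product-quotient surface of general type with $p_g=0$ and $\gamma \in \{2,3\}$, small singularities, is forced to be among the known 75 families. By Theorem \ref{gamma0} and the results of \cite{bp}, the cases $\gamma=0$ and $\gamma=1$ are fully classified (the latter including the \emph{fake Godeaux}), so the only remaining possibilities with $\gamma \leq 3$ and all singularities of bounded multiplicity are $\gamma=2$ and $\gamma=3$. The strategy is exactly the one already validated in the proof of Theorem \ref{gamma0}: for each of the finitely many baskets and signatures allowed by the bounds, run the classification algorithm ``ExistingSurfaces'' and then, for every surface in the output with $K_S^2 \leq 0$, exhibit explicitly that it cannot be of general type, leaving only the two new numerical Godeaux surfaces described in this section.

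\textbf{Step 1: Reduce to a finite search.} First I would invoke Remark \ref{boundcardbasket} and Proposition \ref{boundmult} to bound, once $p_g=0$, $q=0$ and $\gamma \in \{2,3\}$ are fixed, both the cardinality of the basket $\mathfrak{B}$ and the multiplicity of each singular point. Concretely, for each prescribed multiplicity threshold (at most $13$ when $\gamma=3$, at most $44$ when $\gamma=2$, since we are excluding the larger singularities by hypothesis), Proposition \ref{boundmult} together with the lower bound $\xi \geq \tfrac12$ from Corollary \ref{4chi+gamma-mu>=1/2} produces a finite list of admissible baskets. For each such basket, Proposition \ref{bound} and Proposition \ref{robavecchia} cut down the signatures $t_1,t_2$ to finitely many, and Proposition \ref{bound}, a) bounds $|G|$, so the MAGMA script terminates with a complete list of candidate surfaces.

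\textbf{Step 2: Dispose of the non-general-type surfaces.} Then I would go through the output. Surfaces with $K_S^2 > 0$ are already known to be minimal and of general type by \cite{bp} (and, for $\gamma>0$, they give precisely the \emph{fake Godeaux} and the two numerical Godeaux surfaces of this section). For each surface with $K_S^2 \leq 0$ I would argue, exactly as in the lemmas following Table \ref{K2<1}, by assuming $S$ is of general type and deriving a contradiction: either the exceptional configuration already satisfies the minimality criterion of Lemma \ref{-4-3-2} (forcing $K_{\bar S}^2 = K_S^2 \leq 0$, impossible for general type), or one locates a hypothetical $(-1)$-curve $E$, writes $E \sim \tfrac{\mu_1}{|G|}F_1 + \tfrac{\mu_2}{|G|}F_2 - \sum a_i A_i$ using the basis provided by Proposition \ref{gamma+pg}, imposes $E^2 = K_S E = -1$ in the formulas \eqref{KE} and \eqref{E2} of Proposition \ref{E2KE}, and shows the resulting system has no admissible integral solution, or contracts $E$ to reach a configuration covered by Remark \ref{singular}.

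\textbf{The main obstacle.} The conceptual content is light, but the genuine difficulty is the \emph{completeness} of the computer search: Proposition \ref{boundmult}, b) allows multiplicities up to $12(4\chi+2\gamma-\tfrac32)$, which for $\gamma=2,3$ is large, so the finite list is big and, as noted at the end of Section 5, the MAGMA script produces a second output of ``skipped'' pairs (group, signature) it could not fully handle for technical reasons. The essential work is therefore to verify, by arguments parallel to those in \cite{bp}, that none of these skipped cases contributes a new surface of general type, and to confirm that among the surfaces with $K_S^2 \leq 0$ the only ones surviving as general type are the two numerical Godeaux surfaces already constructed here. I expect this case-by-case disposal of borderline baskets (those with exactly the combinatorics that prevents the clean application of Lemma \ref{-4-3-2}) to be the most laborious part, handled surface by surface through the $(-1)$-curve intersection analysis of Proposition \ref{E2KE} as above.
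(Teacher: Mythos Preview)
Your overall strategy matches the paper's, but there is a real gap: you claim that $\gamma=1$ is already fully classified by Theorem \ref{gamma0} and \cite{bp}, and therefore restrict the search to $\gamma\in\{2,3\}$. This is wrong. The results of \cite{bp} classify by $K_S^2$ (covering $K_S^2\geq 1$), not by $\gamma$; in particular the numerical Godeaux of Section \ref{constructions} with $\gamma=1$ has $K_S^2=0$ and was \emph{not} found there. Since the statement of the proposition carries no multiplicity caveat for $\gamma=1$, the proof must also run the algorithm for $\gamma=1$ up to the full bound $12(4\chi+2\gamma-\tfrac32)=54$ of Proposition \ref{boundmult}, b), and then dispose of the fourteen resulting surfaces with $K_S^2\leq 0$ (Table \ref{gamma1}). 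The paper does exactly this. Relatedly, your Step 2 misplaces the two new numerical Godeaux surfaces: both have $K_S^2\leq 0$, so they must be identified as the surviving general-type cases inside the $K_S^2\leq 0$ analysis, not among the $K_S^2>0$ output.

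A smaller caveat: invoking ``the basis provided by Proposition \ref{gamma+pg}'' to feed Proposition \ref{E2KE} only works when $p_g+\gamma=0$ (see the Remark following Proposition \ref{E2KE}); for $\gamma>0$ the classes $F_1,F_2,A_i$ do not span $H^2(S,\QQ)$, so an arbitrary $(-1)$-curve need not admit that decomposition. The case-by-case disposal for $\gamma>0$ therefore leans more on the configurational arguments (Proposition \ref{3points}, Corollary \ref{b-3}, Lemma \ref{-4-3-2}, Remark \ref{singular}) than on the intersection-matrix computation you outline.
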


The proof is obtained by running our program for $\gamma=1$ and multiplicity up to $54$, 
$\gamma=2$ and multiplicity up to $44$, $\gamma=3$ and multiplicity up to $13$, 
and then by showing case by case that the resulting surface is not of general 
type. 

The full list of the cases to consider is the tables \ref{gamma1},  \ref{gamma2}, \ref{gamma2b} and  \ref{gamma3}. Note that in the last column we list only the SmallGroup identifier of the MAGMA database of groups up to order 2000, i.e. (n,m) means the m-th group of order n.

\begin{table}
\caption{Product-quotient surfaces not of general type with $p_g=q=0$, $\gamma=1$}
\label{gamma1}
\renewcommand{\arraystretch}{1,3}
 \begin{tabular}{|c|c|c|c|c|c|c|c|}
\hline
$\gamma$&$K^2_S$&Sing X&$t_1$&$t_2$&$G$\\
\hline\hline
1&  -2&$4 \times \frac 12,4\times \frac 14$ &$4,4,4$&$4,4,4$& (16,2) \\
   \hline
1&  -3&$2 \times \frac 12, \frac 13, 2 \times \frac 23,2\times \frac 16$ &$2,6,6$&$2,6,6$& (48,49)  \\
   \hline
1&  -3&$4 \times \frac 12,\frac 17,2\times \frac 27$ &$2,3, 7$&$4,4, 7$& (168,42)  \\
   \hline
1&  -3&$4 \times \frac 12, \frac 14, \frac 18, \frac 58$ &$2,4,8$&$4,4,8$& (32,11)  \\
   \hline
1&  -4&$6 \times \frac 12,\frac 23,2 \times \frac 16$ &$2,4,6$&$2,2,2,6$& (24,8)  \\
   \hline
1&  -4&$2 \times \frac 13,3 \times \frac 23,2\times \frac 16$ &$3,3,6$&$3,3,6$& (36,11)  \\
   \hline
1&  -4&$2 \times \frac 12,\frac 13, \frac 23, \frac 17, 2\times \frac 27$ &$2,3,7$&$3,4,7$& (168,42)  \\
   \hline
1&  -4&$7 \times \frac 12,\frac 18,\frac 38$ &$2,3, 8$&$2,2,2,8$& (48,29) \\
   \hline
1&  -4&$2 \times \frac 12,\frac 13 , \frac 23, \frac 14, \frac 18, \frac 58$ &$2,3,8$&$3,4,8$&(96,64)  \\
   \hline
1&  -5&$2 \times \frac 13, 2 \times \frac 23,\frac 17,2\times \frac 27$ &$2,3,7$&$3,3,7$& (168,42)   \\
   \hline
1&  -5&$2 \times \frac 13,2 \times \frac 23,\frac 17,2\times \frac 27$ &$3,3,7$&$3,3,7$& (21,1)  \\
   \hline
1&  -5&$2 \times \frac 12,2 \times \frac 14,\frac 34, \frac18, \frac58$ &$2,4,8$&$2,4,8$& (64,32)  \\
   \hline
1&  -8&$2 \times \frac 34,2\times \frac 18,2\times \frac 58$ &$2,8, 8$&$2,8, 8$& (16,5) \\
   \hline
1&  -8&$4 \times \frac 12,2 \times \frac 34,\frac 1{12},\frac 5{12}$ &$2,4,12$&$2,4,12$& (24,5)\\
   \hline
\end{tabular}
\end{table}

\begin{table}
\caption{Product-quotient surfaces not of general type with $p_g=q=0$, $K^2 \geq -8$, $\gamma=2$ and singularities of multiplicity at most $44$}
\label{gamma2}
\renewcommand{\arraystretch}{1,3}
 \begin{tabular}{|c|c|c|c|c|c|c|c|}
\hline
$\gamma$&$K^2_S$&Sing X&$t_1$&$t_2$&$G$\\
\hline\hline
2&  -3&$\frac 12,\frac 13, \frac 23, \frac 14, 2 \times \frac 18$ &$2,3,8$&$4,6,8$& (192,181)  \\
   \hline
2&  -4&$5 \times \frac 12,\frac 14, 2 \times \frac 18$ &$2,4,8$&$4,4,8$& (64,8)  \\
   \hline
2&  -5&$3 \times \frac 12,3\times \frac 13, 3 \times \frac 16$ &$3,6,6$&$3,6,6$& (18,5) \\
   \hline
2&  -6&$8 \times \frac 13,2\times \frac 16$ &$3,3,6$&$3,3,6$& (36,11) \\
   \hline
2&  -6&$7 \times \frac 12,\frac 14, 2 \times \frac 18$ &$2,4,8$&$2,4,8$& (128,75) \\
   \hline
2&  -6&$\frac 14, \frac 34, 2 \times \frac 18, 2 \times \frac 38$ &$2,4,8$&$2,2,8,8$& (32,9) \\
   \hline
2&  -6&$\frac 12, 2 \times \frac 13, 2 \times \frac 23, \frac 14, 2 \times \frac 18$ &$2,3,8$&$3,4,8$&(192,181) \\
   \hline
2&  -6&$3 \times \frac 12, 2 \times \frac 14, \frac 34, 2 \times \frac 18$ &$2,4,8$&$2,2,4,8$& (32,9) \\
   \hline
2&  -6&$7 \times \frac 12, 2 \times \frac 15, \frac 1{10}$ &$2,5,10$&$2,5,10$&(50,3) \\
   \hline
2&  -6&$4 \times \frac 12, 2 \times \frac 14, \frac 1{12}, \frac 5{12}$ &$2,4,12$&$2,2,4,12$& (24,5) \\
   \hline
2&  -6&$2 \times \frac 12, 4 \times \frac 13, \frac 1{12}, \frac 7{12}$ &$2,3,12$&$3,6,12$&(72,42) \\
   \hline
2&  -7&$2 \times \frac 13,2\times \frac 23, 5 \times \frac 15$ &$3,3,5$&$3,3,5$&(75,2) \\
   \hline
2&  -7&$5 \times \frac 12,3\times \frac 13, 3 \times \frac 16$ &$2,6,6$&$2,6,6$& (36,12)  \\
   \hline
2&  -7&$5 \times \frac 12,\frac 13, \frac 23, \frac 14, 2 \times \frac 18$ &$2,3,8$&$2,6,8$&192,181) \\
   \hline
2&  -7&$2 \times \frac 12,\frac 13, \frac 23, 2 \times \frac 14, \frac 1{12}, \frac 5{12}$ &$2,3,12$&$4,12,12$& (48,33) \\
   \hline
2&  -8&$4 \times \frac 12,2\times \frac 23, 4 \times \frac 16$ &$2,4,6$&$2,2,6,6$& (24,8) \\
   \hline
2&  -8&$4 \times \frac 12,2\times \frac 23, 4 \times \frac 16$ &$2,6,6$&$2,2,6,6$& (12,5)  \\
   \hline
2&  -8&$6 \times \frac 12,2 \times \frac 18, 2 \times \frac 38$ &$2,4,8$&$2,2,8,8$& (16,8) \\
   \hline
2&  -8&$6 \times \frac 12,2 \times \frac 18, 2 \times \frac 38$ &$2,3,8$&$2,2,8,8$& (48,29) \\
   \hline
2&  -8&$9 \times \frac 12, \frac 14, 2 \times \frac 18$ &$2,4,8$&$2,4,8$& (64,8) \\
   \hline
2&  -8&$2 \times \frac 12, 2 \times \frac 14, 2 \times \frac 18, 2 \times \frac 58$ &$2,8,8$&$2,8,8$& (32,5) \\
   \hline
2&  -8&$2 \times \frac 12, \frac 14, \frac 34, 2 \times \frac 18, 2 \times \frac 38$ &$2,8,8$&$4,8,8$& (16,5) \\
   \hline
2&  -8&$6 \times \frac 12, \frac 45, 2 \times \frac 1{10}$ &$2,4,10$&$2,2,2,10$& (40,8) \\
   \hline
2&  -8&$4 \times \frac 12, 4 \times \frac 13, \frac 1{12}, \frac 7{12}$ &$2,6,12$&$2,6,12$& (24,10) \\
   \hline
2&  -8&$2 \times \frac 12, 4 \times \frac 13, 2 \times \frac 23, \frac 14, \frac 1{12}$ &$2,3,12$&$2,3,12$& (192,194) \\
   \hline
2&  -8&$4 \times \frac 13, \frac 14, \frac 34, \frac 1{12}, \frac 7{12}$ &$2,3,12$&$3,4,12$&(72,42) \\
   \hline
      2&  -8&$2 \times \frac 3{20}, 2 \times \frac 14,  6 \times \frac 12$ &$2,4,20$&$2,4,20$& (40,5) \\
      \hline
\end{tabular}
\end{table}

\begin{table}
\caption{Product-quotient surfaces not of general type with $p_g=q=0$, $K^2 \leq -9$, $\gamma=2$ and singularities of multiplicity at most $44$}
\label{gamma2b}
\renewcommand{\arraystretch}{1,3}
 \begin{tabular}{|c|c|c|c|c|c|c|c|}
\hline
$\gamma$&$K^2_S$&Sing X&$t_1$&$t_2$&$G$\\
\hline\hline
2&  -9&$5 \times \frac 15,4\times \frac 25$ &$5,5,5$&$5,5,5$& (5,1) \\
   \hline
2&  -9&$\frac 13, \frac 23, 2 \times \frac 17,4\times \frac 27$ &$2,3,7$&$3,7,7$& (168,42) \\
   \hline
2&  -9&$4 \times \frac 12,5 \times \frac 14, \frac 18, \frac 58$ &$2,4,8$&$2,4,8$& (32,11) \\
   \hline
2&  -9&$\frac 13, \frac 23, 2 \times \frac 14, 2 \times \frac 18, 2 \times \frac 58$ &$2,3,8$&$3,8,8$& (96,64) \\
   \hline
2&  -9&$7 \times \frac 12, \frac 13, \frac 23, 2 \times \frac 15, \frac 1{10}$ &$2,3,10$&$2,3,10$& (150,5) \\
   \hline
2&  -9&$4 \times \frac 12, \frac 13, \frac 23, 2 \times \frac 14, \frac 1{12}, \frac 5{12}$ &$2,12,12$&$3,4,12$& (12,2) \\
   \hline
2&  -9&$2 \times \frac 13, 2 \times \frac 23, \frac 1{13}, 2 \times \frac 3{13}$ &$3,3,13$&$3,3,13$& (39,1) \\
   \hline
    2&  -9&$\frac 1{13}, 2 \times \frac 3{13}, 2 \times \frac 13, 2 \times \frac 23$ &$3,3,13$&$3,3,13$& (39,1) \\
    \hline
2&  -10&$2 \times \frac 13,4\times \frac 23, 4 \times \frac 16$ &$3,6,6$&$3,6,6$& (6,2)  \\
   \hline
2&  -10&$4 \times \frac 12, 2 \times \frac 17,4\times \frac 27$ &$2,3,7$&$2,7,7$& (168,42) \\
   \hline
2&  -10&$2 \times \frac 12,2 \times \frac 13,2 \times \frac 23,2 \times \frac 18, 2 \times \frac 38$ &$2,3,8$&$3,8,8$& (48,29) \\
   \hline
2&  -10&$7 \times \frac 12,2 \times \frac 14,\frac 34,2 \times \frac 18$ &$2,4,8$&$2,4,8$& (32,9) \\
   \hline
2&  -10&$8 \times \frac 12,2 \times \frac 18, 2 \times \frac 38$ &$2,8,8$&$2,8,8$&(8,1) \\
   \hline
2&  -10&$4 \times \frac 12, 2 \times \frac 14, 2 \times \frac 18, 2 \times \frac 58$ &$2,8,8$&$2,8,8$& (16,6) \\
   \hline
2&  -10&$4 \times \frac 12, \frac 14, \frac 34, 2 \times \frac 18, 2 \times \frac 38$ &$2,8,8$&$4,8,8$& (8,1) \\
   \hline
2&  -10&$5 \times \frac 12, 2 \times \frac 13, 2 \times \frac 23, \frac 14, 2 \times \frac 18$ &$2,3,8$&$2,3,8$& (192,181) \\
   \hline
2&  -10&$2 \times \frac 12, 2 \times \frac 15, 3 \times \frac 25, \frac 1{10}, \frac 3{10}$ &$2,5,10$&$5,10,10$& (10,2) \\
   \hline
2&  -10&$6 \times \frac 12, 4 \times \frac 13, \frac 1{12}, \frac 7{12}$ &$2,3,12$&$2,3,12$& (72,42) \\
   \hline
2&  -10&$5 \times \frac 12, 3 \times \frac 23, \frac 14, \frac 16, \frac 1{12}$ &$2,3,12$&$2,6,12$& (48,33) \\
   \hline
2&  -10&$8 \times \frac 12, 2 \times \frac 14, \frac 1{12}, \frac 5{12}$ &$2,4,12$&$2,4,12$& (24,5) \\
   \hline
2&  -11&$4 \times \frac 12, \frac 15, 2 \times \frac 45, 2 \times \frac 1{10}$ &$2,10,10$&$2,10,10$& (20,5) \\
   \hline
2&  -12&$6 \times \frac 12, \frac 14, \frac 34, \frac45, 2 \times \frac 1{10}$ &$2,4,10$&$2,4,10$& (40,8) \\
   \hline
2&  -12&$2 \times \frac 12, 4 \times \frac 25, \frac45, 2 \times \frac 1{10}$ &$2,5,10$&$5,10,10$& (10,2) \\
   \hline
   2&  -12&$7 \times \frac 12, \frac 1{16}, \frac 14,  \frac 7{16}, \frac 34$ &$2,4,16$&$2,4,16$& (32,19) \\
   \hline
2&  -14&$2 \times \frac 56, 2 \times \frac 1{12}, 2 \times \frac 7{12}$ &$2,12,12$&$2,12,12$& (24,9) \\
   \hline
2&  -14&$5 \times \frac 12, 2 \times \frac 34, \frac 56, 2 \times \frac 1{12}$ &$2,4,12$&$2,4,12$& (48,14) \\
   \hline
   2&  -14&$6 \times \frac 12, 2 \times \frac 34, \frac 9{20},  \frac 1{20}$ &$2,4,20$&$2,4,20$& (40,5) \\
   \hline
\end{tabular}
\end{table}

\begin{small}
\begin{table}
\caption{Product-quotient surfaces not of general type with $p_g=q=0$, $\gamma=3$ and singularities of multiplicity 
at most $13$}
\label{gamma3}
\renewcommand{\arraystretch}{1,3}
 \begin{tabular}{|c|c|c|c|c|c|c|c|}
\hline
$\gamma$&$K^2_S$&Sing X&$t_1$&$t_2$&$G$\\
\hline\hline
3&  -9&$7 \times \frac 13, 4 \times \frac 16$ &$3,6,6$&$3,6,6$& (18,3) \\
   \hline
3&  -10&$12 \times \frac 14$ &$4,4,4$&$4,4,4$& (16,2) \\
   \hline
3&  -10&$3 \times \frac 23, 6 \times \frac 16$ &$3,3,6$&$3,3,6$& (108,22) \\
   \hline
3&  -12&$2 \times \frac 12,3 \times \frac 23,6 \times \frac 16$ &$2,6,6,6$&$3,6,6$&  (6,2)  \\
   \hline
3&  -12&$7 \times \frac 12, 2 \times \frac 13, 5 \times \frac 16$ &$2,6,6$&$2,6,6$& (36,12)  \\
   \hline
 3&  -15&$5 \times \frac 17, 4 \times \frac 37, 5 \times \frac 16$ &$7,7,7$&$7,7,7$& (7,1)  \\
   \hline
 3&  -13&$3 \times \frac 17, 6 \times \frac 27$ &$2,3,7$&$7,7,7$& (168,42)  \\
   \hline
 3&  -13&$3 \times \frac 17, 6 \times \frac 27$ &$7,7,7$&$7,7,7$& (7,1)  \\
   \hline
 3&  -12&$3 \times \frac 18, 3 \times \frac 38,5 \times \frac 12$ &$2,3,8$&$2,8,8,8$& (48,29)  \\
   \hline
3&  -10&$2 \times \frac 18, 5 \times \frac 14,5 \times \frac 12$ &$2,4,8$&$2,4,8$& (128,75)  \\
   \hline
3&  -16&$4 \times \frac 18, 2 \times \frac 34,8 \times \frac 12$ &$2,8,8$&$2,8,8$& (16,5)  \\
   \hline
3&  -12&$2 \times \frac 18, 5 \times \frac 14,7 \times \frac 12$ &$2,4,8$&$2,4,8$& (64,8)  \\
   \hline
3&  -6&$2 \times \frac 18, 5 \times \frac 14,12$ &$2,8,8$&$2,8,8$& (64,6)  \\
   \hline
3&  -10&$4 \times \frac 18, 2 \times \frac 34,2 \times \frac 12$ &$2,4,8,8$&$2,8,8$& (16,5)  \\
   \hline
3&  -12&$4 \times \frac 18, 2 \times \frac 34,4 \times \frac 12$ &$2,4,8$&$2,2,8,8$& (32,9)  \\
   \hline
3&  -12&$4 \times \frac 18, 2 \times \frac 34,4 \times \frac 12$ &$2,8,8$&$2,2,8,8$& (16,5)  \\
   \hline
3&  -8&$4 \times \frac 18, 2 \times \frac 34$ &$2,4,8$&$2,2,2,8,8$& (32,9)  \\
   \hline
3&  -16&$4 \times \frac 18, 2 \times \frac 14,4 \times  \frac34$ &$4,8,8$&$4,8,8$& (8,1)  \\
   \hline
3&  -12&$2 \times \frac 18, 6 \times \frac 14,2 \times \frac 58$ &$4,8,8$&$4,8,8$& (8,1)  \\
   \hline
3&  -13&$2 \times \frac 19, 2 \times \frac 29, 5 \times \frac 13,2 \times \frac 23$ &$3,9,9$&$3,9,9$& (9,1)  \\
   \hline
3&  -9&$3 \times \frac 19, 2 \times \frac 13, 3 \times \frac 23$ &$3,3,9$&$3,3,9$& (81,9)  \\
   \hline
3&  -13&$2 \times \frac 19, 3 \times \frac 29, 3 \times \frac 13, \frac 49$ &$3,9,9$&$9,9,9$& (9,1)  \\
   \hline
3&  -6&$3 \times \frac 19, 2 \times \frac 23, \frac 13$ &$3,9,9$&$3,9,9$& (27,2)  \\
   \hline
3&  -12&$\frac 1{12}, 5 \times \frac 14, 4 \times \frac 13, 2 \times \frac 23$ &$3,4,12$&$3,4,12$& (12,2)  \\
   \hline
   3&  -11&$\frac 1{12}, 2 \times \frac 16, \frac 14, \frac 34, 3 \times \frac 13,  \frac 7{12}$ &$3,12,12$&$4,6,12$& (12,2)  \\
   \hline
      3&  -14&$2 \times \frac 1{12}, 2 \times \frac 13,  2 \times \frac 7{12}, 8 \times \frac 23$ &$2,3,12$&$3,12,12$& (72,42)  \\
   \hline
     3&  -12&$2 \times \frac 1{12}, 2 \times \frac 5{12},  6 \times \frac 12$ &$2,4,12$&$2,2,12,12$& (24,5)  \\
   \hline
 3&  -12&$\frac 1{12}, \frac 16, \frac 14,6 \times \frac 13,  5 \times \frac 12$ &$2,3,12$&$2,3,12$& (144,27)  \\
   \hline
      3&  -16&$2 \times \frac 1{12}, 2 \times \frac 14,  9 \times \frac 12,  \frac 56$ &$2,4,12$&$2,4,12$& (48,14)  \\
   \hline
     3&  -16&$2 \times \frac 1{12}, 2 \times \frac 5{12},  10 \times \frac 12$ &$2,12,12$&$2,12,12$& (12,2)  \\
   \hline
 3&  -13&$ \frac 1{12}, 2 \times \frac 16,  3 \times \frac 13,6 \times \frac 12, \frac 7{12}$ &$2,6,12$&$2,6,12$& (24,10)  \\
   \hline
    3&  -13&$ \frac 1{12}, 2 \times \frac 16,  3 \times \frac 13,6 \times \frac 12, \frac 7{12}$ &$2,3,12$&$2,3,12$& (216,92)  \\
   \hline
    3&  -15&$ \frac 1{12}, 2 \times \frac 14,   \frac 13,6 \times \frac 23$ &$2,3,12$&$3,12,12$& (48,33)  \\
   \hline
 3&  -14&$2 \times  \frac 1{12}, 2 \times \frac 14,  2 \times \frac 13, \frac 12,2 \times \frac 23, \frac 56$ &$3,4,12$&$6,12,12$& (12,2)  \\
   \hline
   3&  -16&$2 \times  \frac 1{12}, \frac 16, 2 \times \frac  5{12},  4 \times \frac 12,  \frac 56$ &$2,12,12$&$6,12,12$& (12,2)  \\
   \hline
     3&  -8&$2 \times  \frac 1{12}, 2 \times \frac 14,  \frac  12,  \frac 56$ &$2,4,12$&$2,2,4,12$& (48,14)  \\
   \hline
    3&  -11&$2 \times  \frac 1{12}, 2 \times \frac 14, \frac 12,  \frac  13,  \frac 23,  \frac 56$ &$2,12,12$&$4,6,12$& (24,9)  \\
   \hline
 \end{tabular}
\end{table}
\end{small}

We skip the details of the proof, which is rather long (since the cases are many)
and most of the times straightforward, repeating arguments already used in this paper. Still in a few cases quite some effort is needed to show that the surface is not of general type. Unfortunately, we do not know a systematic way to prove thatcertain product-quotient surfaces cannot be of general type.

\begin{rem}
Note that the surfaces of general type we have found have singular points
of multiplicity much smaller than the bounds in Proposition \ref{nothingmore}, giving some evidence 
to the conjecture that there are no other examples. Still, we can't prove it without proving first 
Conjecture \ref{boundc} at least in the case $p_g=q=0$, finding $\Gamma(0,0)$ explicitly. 
\end{rem}

\section{Upper bounds for $\gamma$ under some additional hypotheses}\label{evidences}

In this section we will give some evidence to Conjecture \ref{boundc}, establishing an upper bound $\Gamma(p_g,q)$ for 
$\gamma$ for product-quotient surfaces of general type under some additional hypotheses. 

Write
$$
K_S=P+N=\sigma^*K_X-\A,
$$
where $P+N$ is the Zariski decomposition of the canonical divisor of the product-quotient surface $S$. 

\begin{rem}
By construction 
$P$,  $\sigma^*K_X$ are nef, $N$, $\A$ are effective; and 
$$
PN = \sigma^* K_X \A = 0.
$$
In particular, $K_S^2 = P^2 +N^2 =  K_X^2 + \A^2$.
\end{rem}
Recall that
 \begin{itemize}
 \item $K_S^2 = 8\chi - 2 \gamma -l$;
 \item $1 \leq P^2 \in \NN$;
 \item $\nu:=-N^2$ is the number of  (-1)-cycles on $S$;
\item $ - \A^2=K_S\A=6\gamma+l-2\mu \geq 0$.
 \end{itemize}

Let $\delta\geq 0$ be a number smaller than
$E \sigma^*K_X$, for each exceptional divisor of the first kind $E$ on $S$.
Then  $$N\sigma^*K_X\geq \nu \delta,$$
which implies $K_S \A  \geq N \A= N (\sigma^*K_X-K_S) \geq \nu (1+\delta)$
and therefore
$$
\nu \leq \frac{6\gamma+l-2\mu}{1+\delta}
$$
so
\begin{equation}\label{1+delta} 
1 \leq P^2 =K^2_S-N^2\leq 8\chi +\left(\frac{6}{1+\delta}- 2\right) \gamma -\frac{\delta}{1+\delta} l -\frac{2\mu}{1+\delta}
\end{equation}

\begin{rem} 
Since $K_X$ is nef, we can set $\delta=0$; substituting $\delta=0$ in  (\ref{1+delta}) we obtain $1\leq 2\xi$, 
giving a further proof of Corollary \ref{4chi+gamma-mu>=1/2}.

Writing $E'$ for the unique irreducible component with self intersection (-1) of an exceptional divisor of the first kind $E$, 
we note that, since $\sigma(E')$ is a curve, $K_X$ is ample and $IK_X$ Cartier,  $E\sigma^*K_X\geq E'\sigma^*K_X\geq \frac{1}{I}$. 
So equation (\ref{1+delta}) holds for $\delta =\frac{1}{I}$. 
\end{rem}

\begin{rem}
Assume we could have $\delta =5$, then (\ref{1+delta}) would give a proof of 
Conjecture \ref{boundc}, since
\begin{equation*}
N\sigma^*K_X\geq -5N^2 \Rightarrow
\gamma \leq 8\chi - 1 -\frac{5}{6} l -\frac{\mu}{3}\leq 8\chi - 1
\end{equation*}

\end{rem}
 
 Unfortunately, in general we cannot hope in such a big number $\delta$, since in the fake Godeaux case described in 
\cite{bp} we have:
$$\nu=2, \ E_1\sigma^*K_X=1, \ E_2\sigma^*K_X=11/7,$$ 
hence
 $$N\sigma^*K_X=-9/7 N^2.$$
 Still we can use this inequality
 to prove Conjecture \ref{boundc} if we assume instead that the $\Theta_i$ are not too small. In fact, we have the following result.

\begin{prop} If $\Theta_1$ and $\Theta_2$ are at least $\frac52$, then $\gamma < 8\chi-1$.
\end{prop}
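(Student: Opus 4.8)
The plan is to drive the hypothesis through the chain of estimates leading to $(\ref{1+delta})$, with the goal of legitimising the value $\delta=5$. As the remark following $(\ref{1+delta})$ records, putting $\delta=5$ gives
\[
\gamma\le 8\chi-1-\tfrac56\,l-\tfrac13\,\mu ,
\]
which is strictly $<8\chi-1$ as soon as the basket is nonempty (so $\mu>0$), while an empty basket forces $S=X$ smooth with $\gamma=0<8\chi-1$ (recall $\chi\ge1$); and if $S$ happens to be minimal then $\nu=0$ and $1\le P^2=K_S^2=8\chi-2\gamma-l$ already yields $\gamma\le 4\chi-\tfrac{l+1}2<8\chi-1$. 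So everything reduces to proving that $\delta=5$ is admissible, i.e. that every exceptional divisor of the first kind $E$ satisfies $E\sigma^*K_X\ge 5$.

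First I would rewrite the intersection number using $\sigma^*K_X\equiv\Theta_1 F_1+\Theta_2 F_2$ (from the proof of Proposition \ref{E2KE}), so that $E\sigma^*K_X=\Theta_1(EF_1)+\Theta_2(EF_2)$ with $EF_1,EF_2\in\NN$ not both zero. If $E$ is horizontal for both fibrations, then $EF_1,EF_2\ge1$ and $E\sigma^*K_X\ge\Theta_1+\Theta_2\ge5$. Otherwise $E$ lies in a fibre, say of $f_1$ (so $EF_1=0$); since the general fibre of $f_1$ is irreducible of genus $\ge2$ and the only rational curves in the reducible fibres are the $A_j$ (self-intersection $\le-2$, hence not $(-1)$-curves) and the central components $F_1^{(i)}$, we must have $E=F_1^{(i)}$. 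Now $F_1^{(i)}\cong C_2/H$ with $H$ the cyclic stabiliser of order $m_i$, and $EF_2$ is the degree of the induced $C_2/H\to C_2/G$, namely $[G:H]=|G|/m_i$; thus $E\sigma^*K_X=\tfrac{|G|}{m_i}\Theta_2$, which is $\ge5$ whenever $m_i\le|G|/2$.

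The hard part, and the genuine obstacle, is the borderline case $m_i=|G|$. There the stabiliser of the relevant point of $C_1$ is all of $G$, forcing $G$ cyclic (a faithful action fixing a point embeds $G\hookrightarrow GL_1(\CC)$) and making $E=F_1^{(i)}\cong C_2/G$ a rational \emph{section} of $f_2$, with $E\sigma^*K_X=\Theta_2$ possibly as small as $\tfrac52$; the $\delta$-mechanism cannot reach $5$ here. I expect this degenerate case must be handled separately — either by showing that under $\Theta_1,\Theta_2\ge\tfrac52$ such a central component cannot actually be a $(-1)$-curve, or, more robustly, by replacing the $\delta$-argument with a direct numerical bound. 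Concretely, the desired conclusion follows from $3\gamma\le 4\chi+\mu-1$ (equivalently $5\gamma-2\mu\le\xi-1$): combined with the Xiao-type inequality $\nu\le 6\gamma+l-2\mu$ obtained from $K_{\overline S}^2=P^2\le 2\xi$ (Corollary \ref{4chi+gamma-mu>=1/2}) it gives $\nu\le 8\chi+l-2$, whence $K_S^2=K_{\overline S}^2-\nu\ge 3-8\chi-l$ and $\gamma=\tfrac12(8\chi-l-K_S^2)\le 8\chi-\tfrac32<8\chi-1$. It is precisely in establishing $5\gamma-2\mu\le\xi-1$ that the hypothesis $\Theta_1,\Theta_2\ge\tfrac52$ would be spent, by bounding each singularity contribution $\gamma_x$ against its share of $\xi=\Theta_1\Theta_2|G|$ distributed along the central fibres through Proposition \ref{PolizziInt} and Remark \ref{ndividesmi}.

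In summary, I would split into the two regimes above: for every exceptional divisor of the first kind that is not a degenerate section one proves $E\sigma^*K_X\ge5$ and invokes $(\ref{1+delta})$ with $\delta=5$, while the cyclic section case is quarantined and closed by the direct estimate $5\gamma-2\mu\le\xi-1$; either route terminates in $\gamma<8\chi-1$.
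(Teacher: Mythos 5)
Your two main cases reproduce the architecture of the paper's proof: horizontally, $E\sigma^*K_X\ge\Theta_1+\Theta_2\ge 5$; vertically, the only candidate for the $(-1)$-component is a central component $F_1^{(i)}$. Moreover your intersection computation there is the correct one: from $\sigma^*K_X\equiv\Theta_1F_1+\Theta_2F_2$ and $F_1^{(i)}F_2=|G|/m_i$ one gets $F_1^{(i)}\sigma^*K_X=\frac{|G|}{m_i}\Theta_2$, as one can cross-check by adjunction together with Proposition \ref{PolizziInt}. The paper's own proof asserts instead $\sigma^*K_XE'=\frac{K_{C_1\times C_2}\,m_iC_2}{|G|}=m_i\Theta_2\ge2\Theta_2$, which implicitly takes $\pi^*\sigma(E')$ to be $m_i$ copies of $C_2$; since the orbit over the branch point has $|G|/m_i$ elements and $\pi$ is quasi-\'etale, the pullback is in fact $|G|/m_i$ copies, so the bound $\ge 2\Theta_2$ is only justified when $m_i\le|G|/2$. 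You have therefore correctly isolated the one configuration the $\delta$-mechanism cannot reach, namely $m_i=|G|$ (forcing $G$ cyclic and $F_1^{(i)}\cong C_2/G$ a section of $f_2$ with $F_1^{(i)}\sigma^*K_X=\Theta_2$, possibly equal to $\frac52$), and this configuration is not actually disposed of by the argument printed in the paper either.

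That said, your proposal does not close this residual case, so as it stands it is not a complete proof. The first suggested repair (excluding a $(-1)$-component equal to such a section under the hypothesis $\Theta_1,\Theta_2\ge\frac52$) is not attempted; the second rests entirely on the unproven inequality $5\gamma-2\mu\le\xi-1$, for which you offer only a strategy. That strategy is moreover delicate: a single singularity of type $\frac1n$ contributes roughly $\frac{5n}{6}$ to the left-hand side, while the bounds on $n$ available from Remark \ref{ndividesmi} and Corollary \ref{cornak} are themselves of order $\xi$, so the proposed comparison of each $\gamma_x$ with its share of $\xi$ does not obviously go through. In summary: where your argument overlaps with the paper's it is correct and indeed more carefully justified at the crucial intersection number, but the case $m_i=|G|$ is left open, and completing the proposition requires either ruling that case out or actually establishing the substitute estimate rather than announcing it.
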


\begin{proof}
If $E'$ is not contained in one of the fibres, then
$$\sigma^*K_X E \geq \sigma^*K_X E'=\frac{K_{C_1 \times C_2}\pi^*\sigma(E' )}{|G|}\geq2(\alpha_1 + \alpha_2) \frac{\Theta_1\Theta_2}{\xi} \geq \Theta_1 + \Theta_2.
$$
Else $\sigma(E')$ is the central component of a singular fibre $F_1^{(i)}$ with multiplicity $m_i$,  then 

$$\sigma^*K_X E \geq \sigma^*K_X E'=\frac{K_{C_1 \times C_2}m_i C_2}{|G|}\geq\frac{\Theta_1\Theta_2}{\xi} 2m_i\alpha_1\geq m_i \Theta_1\geq 2 \Theta_1.
$$

We conclude $E \sigma^*K_X \geq 5$ and therefore \ref{1+delta} holds for $\delta= 2 \frac 52 =5$.

\end{proof}

In a similar way we get an upper  bound for $\gamma$ substituting  $\frac52$ with any number strictly bigger that $\frac32$; still the bounds get worse the closer we approach to $\frac32$.

The assumptions on $\Theta_i$ can be replaced by the hypothesis that $E (F_1+F_2)$ is big enough. For example, we can show the following.
\begin{prop}
Assume that for every exceptional divisor of the first kind $E$, $E(F_1+F_2)\geq 85$. 
Then $\gamma \leq 508\chi - 63$.
\end{prop}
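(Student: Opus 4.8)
The plan is to extract from the hypothesis a \emph{uniform} number $\delta$ satisfying $\delta\le E\sigma^*K_X$ for every exceptional divisor of the first kind $E$, and then to feed this $\delta$ into the master inequality (\ref{1+delta}). The role of the bound $E(F_1+F_2)\ge 85$ is precisely to push $\delta$ past the threshold $2$: as soon as $\delta>2$ the coefficient $\frac{6}{1+\delta}-2$ of $\gamma$ in (\ref{1+delta}) is negative, so that the inequality turns into an upper bound for $\gamma$.

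First I would recall from the proof of Proposition \ref{E2KE} that $\sigma^*K_X\equiv \Theta_1F_1+\Theta_2F_2$, whence for every effective $1$-cycle $E$
$$
\sigma^*K_X\cdot E=\Theta_1\,(F_1\cdot E)+\Theta_2\,(F_2\cdot E).
$$
Because $F_1$ and $F_2$ are generic fibres of the two fibrations they are base-point free, hence nef, so $F_1\cdot E\ge 0$ and $F_2\cdot E\ge 0$. The key arithmetic input is the classical lower bound $\Theta_i\ge \frac1{42}$, valid for every hyperbolic signature, with the minimum $\frac1{42}$ attained only by the signature $(0;2,3,7)$ (this is the Hurwitz bound $|\Aut|\le 84(g-1)$ in disguise). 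Replacing each $\Theta_i$ by $\frac1{42}$, which is legitimate since the $F_i\cdot E$ are nonnegative, gives
$$
\sigma^*K_X\cdot E\ \ge\ \frac1{42}\bigl(F_1\cdot E+F_2\cdot E\bigr)=\frac1{42}\,E(F_1+F_2)\ \ge\ \frac{85}{42}
$$
for every exceptional divisor of the first kind $E$, the last step being exactly the hypothesis.

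I may therefore take $\delta=\frac{85}{42}$ in (\ref{1+delta}). Then $1+\delta=\frac{127}{42}$, the coefficient of $\gamma$ becomes $\frac{6}{1+\delta}-2=-\frac{2}{127}$, and (\ref{1+delta}) reads
$$
1\ \le\ 8\chi-\frac{2}{127}\gamma-\frac{85}{127}\,l-\frac{84}{127}\,\mu.
$$
Since $l\ge 0$ and $\mu\ge 0$, this forces $1\le 8\chi-\frac{2}{127}\gamma$, i.e. $\gamma\le \frac{127}{2}(8\chi-1)=508\chi-\frac{127}{2}$, and in particular $\gamma\le 508\chi-63$, as claimed.

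The only ingredient that is not pure bookkeeping is the universal inequality $\Theta_i\ge \frac1{42}$; everything else is arithmetic inside (\ref{1+delta}). The one point to watch is the direction of the estimate $\Theta_1(F_1E)+\Theta_2(F_2E)\ge \frac1{42}(F_1E+F_2E)$: it is valid only because $F_1\cdot E,\,F_2\cdot E\ge 0$, and it is exactly here that the nefness of the fibre classes is used, allowing the possibly large weights $\Theta_i$ to be lowered to their common minimum.
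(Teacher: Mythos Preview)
Your proof is correct and follows essentially the same route as the paper: both arguments use the Hurwitz bound $\Theta_i\ge\frac1{42}$ together with the hypothesis $E(F_1+F_2)\ge85$ to obtain $E\sigma^*K_X\ge\frac{85}{42}=2+\frac1{42}$, set $\delta=\frac{85}{42}$, and then perform the identical arithmetic in (\ref{1+delta}). Your presentation is in fact slightly more direct than the paper's, since you work with the identity $\sigma^*K_X\equiv\Theta_1F_1+\Theta_2F_2$ on $S$ rather than pulling back to $C_1\times C_2$ via $\pi^*\sigma(E')$ and passing through $\alpha_{\min}$; the content, however, is the same.
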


\begin{proof} Arguing as in the previous proposition
$$\sigma^*K_X E =\frac{K_{C_1 \times C_2}\pi^*\sigma(E') }{|G|}\geq\frac{\Theta_1\Theta_2}{\xi} 170\alpha_{min}\geq 85\Theta_{min}\geq 2 +\frac{1}{42}.$$ 

Then (\ref{1+delta} ) holds for $\delta=2 +\frac{1}{42}$ and

\begin{equation*}
1 \leq 8\chi -\frac{2}{127}\gamma -\frac{85}{127}l -\frac{84}{127}\mu
\Rightarrow
\gamma \leq 508\chi -\frac{85}{2}l -42\mu- \frac{127}{2}
\end{equation*}
\end{proof}

\section{The dual surface of a product-quotient surface}\label{duality}

In this section we assume furthermore that $S$ is {\em regular}, i.e., $q(S) = 0$.
 
Suppose that $S$ is given by a pair of generating vectors: $(a_1, \ldots, a_s)$, $(b_1, \ldots , b_t)$ of $G$.

\begin{defin}
 The {\em dual surface $S'$ of $S$} is the product-quotient surface given by the pair of generating vectors: $(a_1, \ldots, a_s)$, $(b_t^{-1}, \ldots , b_1^{-1})$.

Siilarly we will denote by $X'$ the quotient model of $S'$.
\end{defin}

\begin{rem}\label{dualsings}
It is easy to see that $\frac{1}{n}(1,q) \in \mathfrak{B}(X) \ \iff 
\frac{1}{n}(1,n-q) \in \mathfrak{B}(X')$.
\end{rem}

The numbers of $S'$ are then immediatelly computed by those of $S$ as follows.

\begin{prop}
 Let $S$ be a regular product-quotient surface, and denote by $S'$ its dual surface.
Set $\gamma:=\gamma(X)$, $\mu:=\mu(X)$, $l:=l(X)$,
 $\gamma':=\gamma(X')$, $\mu':=\mu(X')$, $l':=l(X')$. Then:
\begin{enumerate}
 \item $\gamma = - \gamma'$;
\item $\mu = \mu '$, 
\item $\xi = \xi'$;
\item $p_g(S') = p_g(S) + \gamma$. 
\end{enumerate}
\end{prop}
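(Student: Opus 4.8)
The plan is to reduce all four statements to the combinatorial correspondence between the baskets of $X$ and $X'$ recorded in Remark \ref{dualsings}, together with the observation that passing to the dual changes neither $|G|$ nor the genera $g_1,g_2$, and then to feed these facts into the numerical identities already established. First I would prove (1) and (2) purely at the level of baskets. By Remark \ref{dualsings} the basket $\mathfrak{B}(X')$ is obtained from $\mathfrak{B}(X)$ by replacing every singularity of type $\frac{q}{n}$ with one of type $\frac{n-q}{n}$, with the same multiplicities. Since $\mu\left(\frac{q}{n}\right)=1-\frac1n$ depends only on $n$, summing over the basket yields $\mu'=\mu$, which is (2). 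Since $\gamma\left(\frac{q}{n}\right)=-\gamma\left(\frac{n-q}{n}\right)$ by Proposition \ref{gammasym}, summing over the basket yields $\gamma'=-\gamma$, which is (1).

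Next I would establish (3). The dual surface has the same group $G$ and the same first generating vector $(a_1,\ldots,a_s)$, so $C_1'=C_1$ and $g_1'=g_1$. For the second factor, inverting and reversing the generating vector, $(b_1,\ldots,b_t)\mapsto(b_t^{-1},\ldots,b_1^{-1})$, preserves the multiset of orders because $\operatorname{ord}(b_j^{-1})=\operatorname{ord}(b_j)$; moreover, as $S$ is regular we have $q(S)=0$, which forces both quotient genera to vanish, so the signature of the second action is unchanged as a type and $\Theta_2'=\Theta_2$. By the Hurwitz formula (\ref{Hurwitz}) it follows that $g_2'=g_2$. In particular, since the quotient genera are preserved, Proposition \ref{forminf} gives $q(S')=q(S)=0$, so $S'$ is again regular. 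Statement (3) is then immediate from Remark \ref{xipq}, since $\xi=\frac{4(g_1-1)(g_2-1)}{|G|}$ depends only on $|G|$ and the two genera, all of which are preserved.

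Finally, (4) follows by combining the previous three with Definition \ref{xi}. Writing $\xi=4\chi+2\gamma-\mu$ for $X$ and $\xi'=4\chi'+2\gamma'-\mu'$ for $X'$, and substituting $\xi'=\xi$, $\mu'=\mu$, $\gamma'=-\gamma$, I obtain $4\chi'-2\gamma-\mu=4\chi+2\gamma-\mu$, hence $\chi'=\chi+\gamma$. Since both $S$ and $S'$ are regular, $p_g=\chi-1$ and $p_g(S')=\chi'-1$, so $p_g(S')=p_g(S)+\gamma$, which is (4).

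Once the two geometric inputs are in place the rest is bookkeeping, so I expect no serious obstacle. The only point requiring genuine care is the invariance of the second signature under inversion and the resulting equality $g_2'=g_2$; this is where the regularity hypothesis is used, namely to guarantee that the quotient genera are $0$ and hence that $\Theta_2$ depends only on the multiset of orders, which is manifestly preserved. The basket correspondence itself is supplied by Remark \ref{dualsings}, so I would simply cite it rather than reprove it.
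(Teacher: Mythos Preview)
Your argument is correct and follows essentially the same route as the paper: both proofs read (1) and (2) off the basket correspondence of Remark~\ref{dualsings} together with Proposition~\ref{gammasym}, and both obtain (4) from the fact that $(g_1-1)(g_2-1)/|G|$ is unchanged under duality combined with $\gamma'=-\gamma$, $\mu'=\mu$. You are actually more careful than the paper in two respects: you explicitly justify why $g_2'=g_2$ and why $S'$ is again regular (the paper tacitly assumes both), and you route (4) cleanly through (3) via the defining relation $\xi=4\chi+2\gamma-\mu$ rather than invoking the $\chi$-formula of Proposition~\ref{forminf} directly.
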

\begin{proof}
Remark \ref{dualsings} describes the basket of the singularities of $X'$ in terms of the basket of $X$.

Directly by the definition, and proposition \ref{gammasym}
$$\gamma = -\gamma', \ \ \mu = \mu', \ \ l = l'$$

Then
$$
\chi(S') = \frac{(g_1-1)(g_2-1)}{|G|} + \frac 14 (\mu-2\gamma') = \frac{(g_1-1)(g_2-1)}{|G|} + \frac 14 (\mu+2\gamma)=
$$
$$
= \chi(S) + \gamma.
$$

In particular, since we assumed $q(S) =0$, then 
$$
p_g(S') = p_g(S) + \gamma.
$$
\end {proof}
Note that this gives  an independent proof of Proposition \ref{gamma+pg}. Moreover
\begin{cor}
The dual surface of a product-quotient surface with $p_g=0$ has 
maximal Picard number.
\end{cor}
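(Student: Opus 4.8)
The plan is to reduce the statement to Proposition~\ref{gamma+pg} applied to the dual surface $S'$ itself. The preceding proposition already computes all the invariants of $S'$ in terms of those of $S$, and the corollary will follow by simply reading off that the combined invariant $\gamma+p_g$ vanishes for $S'$.

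First I would record the two relevant identities from the preceding proposition: $\gamma(X')=-\gamma(X)$ and $p_g(S')=p_g(S)+\gamma(X)$. Since by hypothesis $S$ has $p_g(S)=0$, the second gives $p_g(S')=\gamma(X)$. Adding the two,
$$
\gamma(X')+p_g(S')=-\gamma(X)+\gamma(X)=0.
$$

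Next I would check that $S'$ is itself a \emph{regular} product-quotient surface, so that Proposition~\ref{gamma+pg} genuinely applies to it. This is immediate from the shape of the dual generating vectors: the first generating vector $(a_1,\dots,a_s)$ is unchanged, while the second is only reversed and inverted, $(b_t^{-1},\dots,b_1^{-1})$. Hence the two quotient curves of $S'$ have the same base genera $g(C_i'/G)=g(C_i/G)$, so $q(S')=q(S)=0$. Thus $\gamma(X')+p_g(S')$ is exactly the half-codimension controlling the Picard number of $S'$.

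Finally I would invoke Proposition~\ref{gamma+pg} for $S'$: since $\gamma(X')+p_g(S')=0$, the space $H^{1,1}(S')$ is generated by the classes of the two fibres and the exceptional curves of the minimal resolution of $X'$, all of which are algebraic, so $S'$ has maximal Picard number. The main point to note is that there is essentially no obstacle here: all the real content is carried by the preceding proposition together with Proposition~\ref{gamma+pg}, and the only step requiring a word of care is the regularity of $S'$, which follows directly from the form of its generating vectors.
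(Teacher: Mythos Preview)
Your argument is correct and is exactly the intended one: the paper states the corollary immediately after the proposition computing $\gamma(X')=-\gamma(X)$ and $p_g(S')=p_g(S)+\gamma(X)$, so that $\gamma(X')+p_g(S')=0$ and Proposition~\ref{gamma+pg} applies to $S'$. Your verification that $S'$ is again regular (since the base genera $g(C_i/G)$ are unchanged) is a point the paper leaves implicit, and is worth spelling out as you did.
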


For the index of $S$ resp. $S'$ we have:
\begin{prop}
$$
\tau(S) := \frac 13 (K_S^2 -2e(S)) = - \frac 13 B(\mathfrak{B}(X)) = -2\gamma -l,
$$

$$
\tau(S') := \frac 13 (K_{S'}^2 -2e(S')) = 2\gamma -l'.
$$
\end{prop}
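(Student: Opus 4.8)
The plan is to reduce both identities to the single topological fact that, for a smooth projective surface, the signature $\tau=\frac13(K^2-2e)$ can be rewritten purely in terms of $K^2$ and $\chi$. First I would recall Noether's formula $12\chi = K_S^2 + e(S)$, where $e(S)=c_2(S)$ denotes the topological Euler number (not the basket invariant $e=l+\mu$ of (\ref{ekB=lmg})). This gives $e(S) = 12\chi - K_S^2$, and substituting into the definition of $\tau$ yields
$$
\tau(S) = \tfrac13\bigl(K_S^2 - 2(12\chi - K_S^2)\bigr) = K_S^2 - 8\chi.
$$
This is the one step carrying any content; the rest is bookkeeping with formulas already proved.

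Next I would invoke Proposition \ref{forminf}, which gives $K_S^2 = 8\chi - 2\gamma - l$. Plugging this into the displayed identity immediately produces $\tau(S) = -2\gamma - l$. To match this with $-\frac13 B(\mathfrak{B}(X))$ I would recall the translation formula (\ref{ekB=lmg}), namely $B = 3(2\gamma + l)$, whence $-\frac13 B = -(2\gamma + l) = -2\gamma - l$. This closes the first chain of equalities.

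For the dual surface I would apply the very same identity $\tau(S') = K_{S'}^2 - 8\chi(S')$, now with $S'$ in place of $S$. Proposition \ref{forminf} applied to $S'$ reads $K_{S'}^2 = 8\chi(S') - 2\gamma(X') - l'$, and by the preceding proposition $\gamma(X') = -\gamma$, so $K_{S'}^2 = 8\chi(S') + 2\gamma - l'$. Subtracting $8\chi(S')$ then gives $\tau(S') = 2\gamma - l'$, as claimed. (Note that $\chi(S')$ cancels and need not be computed, so the relation $\chi(S')=\chi(S)+\gamma$ is not even required here.)

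There is essentially no obstacle: the whole argument substitutes the already-established expressions for $K^2$, $B$ and $\gamma(X')$ into Noether's formula and the signature definition. The only point needing minor care is keeping the invariants of $S$ and $S'$ straight — in particular that $\gamma$ flips sign under duality while simultaneously entering $K_{S'}^2$ with the opposite sign, so the two sign changes conspire to turn $-2\gamma$ into $+2\gamma$ — and recording that the topological Euler number drops out entirely once $\tau$ is rewritten as $K^2 - 8\chi$.
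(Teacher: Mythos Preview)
Your argument is correct. The only difference from the paper is the way you handle $e(S)$: the paper computes it directly via Hodge theory, writing $e(S)=2+2p_g+h^{1,1}(S)$ and then invoking the formula $h^{1,1}(S)=2+2(\gamma+p_g)+l$ from the proof of Proposition~\ref{gamma+pg} to obtain $e(S)=4\chi+2\gamma+l$, after which the substitution into $\tau(S)=\frac13(K_S^2-2e(S))$ yields $-2\gamma-l$. You instead bypass the explicit computation of $e(S)$ entirely by using Noether's formula to rewrite $\tau(S)=K_S^2-8\chi$, and then plug in $K_S^2=8\chi-2\gamma-l$. Both routes are short; yours has the mild advantage of not needing the earlier Hodge-theoretic computation of $h^{1,1}$, relying only on the standard Noether formula and Proposition~\ref{forminf}. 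For the dual surface the two proofs coincide: apply the same identity with $\gamma'=-\gamma$.
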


\begin{proof}
Note that 

\begin{multline*}
e(S) = 2 + 2p_g(S) + h^{1,1}(S) = 2 +2p_g(S) + 2 + 2(\gamma +p_g(S)) +l = \\
= 4(1+p_g(S)) + 2 \gamma + l = 4 \chi(S) + 2 \gamma +l.
\end{multline*}

Therefore
 \begin{multline*}
  \tau(S):=\frac 13 (K_S^2 -2 e(S)) = \frac 13(8\chi(S) -2 \gamma - l- 2 e(S))= \\
= \frac 13(8\chi(S) -2 \gamma -l- 8\chi(S) - 4 \gamma - 2l) = \\
= \frac 13(- 6 \gamma  -3l) =   - 2 \gamma - l.
 \end{multline*}

From the previous calculation it follows that $\tau(S') = -2 \gamma' -l'$. Using $\gamma' = - \gamma$ we get the second equation.
\end{proof}

\begin{rem}
 Let $\bar{S}$ be the minimal model of $S$, then $\tau(S) +(-N^2) = \tau(\bar{S})$. Moreover, by Serrano, we know that for the minimal model of a product-quotient surface, it holds: $\tau(\bar{S}) <0$.

In particular, we get that $l' > 2\gamma$.
\end{rem}

It follows immediately from the above:
$$
\frac 13(B(\mathfrak{B}) + B(\mathfrak{B}')) = l+l' = -(\tau(S) +\tau(S')).
$$

And it is also easy to see that
$$
\frac 13 B(\mathfrak{B}) = l+l'+\tau(S'),
$$
$$
\frac 13 B(\mathfrak{B'}) = l+l'+\tau(S).
$$

\begin{rem}
 Observe that when we go from $S$ to the dual surface $S'$, we consider on $C_1$ the same action of $G$ as for $S$, whereas for $C_2$ we replace the action $y \mapsto g(y)$ by $y \mapsto \overline{g(\overline{y})}$.

Similarly we can replace $y \mapsto g(y)$ by $y \mapsto g \alpha(y)$ for any (holomorphic) automorphism $\alpha$ of 
$C_2$, getting many new surfaces from this construction (depending on the representation theory of $G$).
\end{rem}


\bigskip
\noindent
{\bf Authors' Adresses:}

\noindent
{\em Ingrid Bauer}\\
 Mathematisches Institut der Universit\"at Bayreuth\\
Universit\"atsstr. 30;\\
D-95447 Bayreuth, Germany\\
\noindent
{\em Roberto  Pignatelli} \\
Dipartimento di Matematica, Universit\`a di Trento;\\
Via Sommarive 14;
I-38123 Trento (TN), Italy\\

\bigskip
\noindent

\end{document}